\newif\ifima
\newif\ifams     
\newif\ifels
\newif\ifspringer
\DeclareMathOperator{\divs}{div}
\DeclareMathOperator{\divv}{Div}
\DeclareMathOperator{\curl}{curl}
\DeclareMathOperator{\Curl}{Curl}
\DeclareMathOperator{\tr}{tr}
\numberwithin{equation}{section}
\numberwithin{theorem}{section}
\numberwithin{lemma}{section}
\numberwithin{remark}{section}
\title{Recovery-based a posteriori error analysis for plate bending problems}
\def\shortTitle{Recovery-based a posteriori error analysis of plate bending problems}
\def\myAbstract{We present two new recovery-based a posteriori error estimates for the Hellan--Herrmann--Johnson method in Kirchhoff--Love plate theory. The first error estimator uses a postprocessed deflection and controls the $L^2$ moment error and the discrete $H^2$  deflection error. The second one controls the $L^2\times H^1$ total error and utilizes superconvergent postprocessed moment field and deflection.  The effectiveness of the theoretical results is numerically validated in several experiments.}
\def\myKeywords{Kirchhoff--Love plate, fourth order elliptic equation,  Hellan--Herrmann--Johnson method, a posteriori error estimates,  postprocessing,  superconvergence}
\begin{document}
\ifspringer
   \author{Yuwen Li}
  \institute{Y.~Li : Department of Mathematics, The Pennsylvania State University, University Park, PA 16802\\ {Email:} yuwenli925@gmail.com}
  \date{Received:  \  / Accepted: date}
  \maketitle
  \begin{abstract}\myAbstract\end{abstract}
  \keywords{\myKeywords}
  \subclass{65N15 \and 65N30}
  \markboth{Y.~Li}{\shortTitle}
\fi


\section{Introduction}\label{sec1}
{Let $u$ denote the deflection and $\sigma$ be the moment field of a linearly elastic thin plate, whose midsurface occupies a domain $\Omega\subset\mathbb{R}^2$. In Kirchhoff--Love plate theory, the equilibrium deflection $u$ of the plate subject to the transverse load $f\in L^2(\Omega)$ and mixed boundary conditions is described by the fourth order elliptic boundary value problem  
\begin{subequations}\label{fourthorder}
\begin{align}
\divs\divv\mathbb{M}\nabla^2 u&=f\quad \text{in } \Omega,\\
u=\partial_nu&=0\quad \text{on } \Gamma_c,\label{clamped}\\
u={\sigma}_{nn}&=0\quad \text{on }\Gamma_s,\label{simplysupported}\\
{\sigma}_{nn}=K({\sigma})&=0\quad \text{on }\Gamma_f.\label{free}
\end{align}
\end{subequations}}
Due to very high degree of $C^1$ conforming finite elements, the fourth order boundary value problem is often discretized by the nonconforming element, mixed element, or discontinuous Galerkin (dG) methods, see, e.g, \cite{Hellan1967,Herrmann1967,Morley1968,Johnson1973,CiarletRaviart1974,Miyoshi1973,BabuskaOsbornPitkaranta1980,HansboLarson2002,BrennerSung2005,WangXu2006,BehrensGuzman2011} and references therein. Among various mixed methods for plate bending, the Hellan--Herrmann--Johnson (HHJ) (cf.~\cite{Johnson1973}) method is perhaps the most famous and popular one because of  using low order polynomials and small number of degrees of freedom by hybridization (cf.~\cite{ArnoldBrezzi1985}).
The HHJ mixed method directly approximates the deflection $u$ and the moment field $\sigma$ by finite element solutions $u_h$ and $\sigma_h$, respectively.

To achieve optimal order numerical accuracy for plate bending analysis, adaptive mesh refinement based on
a posteriori error estimation is needed on domains with nonsmooth boundaries.
For the HHJ mixed method, the work \cite{HHX2011} presents a residual-type a posteriori error estimate for the moment error $\|\sigma-\sigma_h\|$, {where $\|\cdot\|$ is the $L^2$ norm}. Meanwhile, that work gives another residual error estimator for the $H^1$ deflection error $\|u-u_h\|_1$ on convex domains. The Ciarlet--Rarviart (see \cite{CiarletRaviart1974}) mixed method directly approximates $\Delta u$ and $u$ and its error estimator is given in \cite{Gudi2011}. A posteriori error estimates for dG methods in plate bending could be found in e.g., \cite{BrennerGudiSung2010,BeiraoNiiranenStenberg2010,GeorgoulisHoustonVirtanen2011,HansboLarson2011,SunHuang2018}. An error estimator for the $C^1$ element method under general boundary conditions and concentrated loads is derived in \cite{GustafssonStenbergVideman2018}. 

In the numerical literature for fourth order elliptic equations, most existing a posteriori error estimates are of residual-type, including the aforementioned ones. It is well known that recovery-based error estimators provide sharper effectiveness ratio and allow simpler implementation. 
In this work, we develop several new recovery-based a posteriori error estimates of the HHJ method based on postprocessed solutions $u_h^*$ and $\sigma_h^*$ under general boundary conditions. The construction of $u_h^*$ is in the spirit of \cite{Stenberg1991}. However, in contrast to the globally discontinuous deflection in \cite{Stenberg1991}, the new deflection $u_h^*\in C^0(\Omega)$ is conforming and is obtained by solving a well-conditioned global problem. We prove a new {quasi-optimal a priori error estimate} for $\|\sigma-\sigma_h\|+\|u-u_h^*\|_{2,h}$, where $\|\cdot\|_{2,h}$ is a discrete $H^2$ norm. Then using $u_h^*$, a simple and new a posteriori error bound $\eta_h$ is derived for controlling $\|\sigma-\sigma_h\|+\|u-u_h^*\|_{2,h}$. A similar result for mixed methods for Poisson's equation could be found in \cite{LovadinaStenberg2006}. As far as we know, all a posteriori error estimates of nonconforming and mixed methods for fourth order elliptic equations in the literature rely on the Helmholtz decomposition. In contrast, the analysis of our error estimator does not utilize Helmholtz-type decomposition. As a result, the first proposed error estimator works on multiply connected domains. In addition, this error estimator is directly applicable to the Herrmann--Miyoshi mixed method, see \eqref{HM}.

The second proposed error estimator $\zeta_h$ is designed for the lowest order HHJ method and is based on superconvergence of $|u-u_h^*|_1$ and $\|\sigma-\sigma^*_h\|$, where $\sigma_h^*=R_h\sigma_h$ is a  postprocessed $C^0$ moment field. In the literature, similar error estimators are known as superconvergent recovery-based error indicators, which are quite popular for their simplicity and asymptotic exactness, see, e.g, \cite{ZZ1992a,ZZ1992b}. The superconvergence analysis of $|u-u_h^*|_1$ is classical and works on unstructured grids. In practice, the moment variable $\sigma$ is also very important. However, there has been little work devoted to $\sigma$. An exception is  \cite{HuMa2016}, which gives a postprocessing scheme $K_h$ by edge averaging and a superconvergence estimate for $\|\sigma-K_h\sigma_h\|$ on {a special uniform mesh satisfying the assumption in Lemma \ref{superclosesigma}}. Our proposed postprocessing procedure $R_h$ solves least-squares problems on local vertex patches, see also, e.g., \cite{BankLi2019} for least-squares recovery process for Raviart--Thomas elements. We rigorously analyze the well-posedness of $R_h$, show the super-approximation of $\|\sigma-R_h\sigma\|$ under general grids, and prove superconvergence of $\|\sigma-R_h\sigma_h\|$ on structured grids. In a numerical example, we investigate a popular structured grid sequence, over which $\|\sigma-R_h\sigma_h\|$ is superconvergent while $\|\sigma-K_h\sigma_h\|$ is not.

The rest of this paper is organized as follows. In Section \ref{secModel}, we introduce basic notation for plate bending and the HHJ mixed method. In Section \ref{sec02h}, we develop a priori and a posteriori error estimates for $\|\sigma-\sigma_h\|+\|u-u_h^*\|_{2,h}$. Section \ref{sec01} is devoted to superconvergence analysis of $\|\sigma-\sigma^*_h\|+|u-u_h^*|_1$ and the corresponding recovery-based error estimator. Numerical examples including both singular and smooth problems are reported in Section \ref{secNE}.

\section{Model problem}\label{secModel}
In this section,
we first explain the notation used in the model problem \eqref{fourthorder}.
The domain $\Omega\subset\mathbb{R}^2$ has a piecewise flat boundary $\partial\Omega=\overline{\Gamma}_c\cup\overline{\Gamma}_s\cup\overline{\Gamma}_f$ with relatively open disjoint subsets $\Gamma_c, \Gamma_s, \Gamma_f$. We use  $n$ to denote the outward unit normal on $\partial\Omega$, $t$ the counterclockwise unit tangent on $\partial\Omega$. 
Let $E$ be the Young's modulus, $\nu\in[0,0.5)$ the Poisson ratio, and $d$ the thickness of the plate. Given a  symmetric $2\times2$ matrix ${\tau}$, the linear moment operator $\mathbb{M}$ is defined as 
\begin{equation*}
    \mathbb{M}{\tau}:=\frac{Ed^3}{12(1-\nu^{2})}\big((1-\nu){\tau}+\nu\tr( {\tau}){\delta}\big),
\end{equation*}
where ${\delta}$ is the $2\times2$ identity matrix, and $\tr({\tau})$ is the trace of ${\tau}$. The moment field of the plate is 
\begin{equation*}
{\sigma}:={\sigma}(u)=\mathbb{M}\nabla^2u.    
\end{equation*}
In this paper, all vectors are viewed as column vectors by default. The normal-normal and twisting components of ${\tau}$ on $\partial\Omega$ are
\begin{equation}\label{taunnnt}
    {\tau}_{nn}:={n}^\intercal{\tau}{n},\quad {\tau}_{nt}:={n}^\intercal{\tau}{t}.
\end{equation}
Let $\nabla^2$ denote the Hessian operator, {${\rm div}=\nabla\cdot$ the divergence operator for vector fields,} and $\divv$ the row-wise divergence applied to matrix-valued functions. 
By $\partial_g$ we denote the directional derivative along the unit vector $g$. The Kirchhoff shear force at the boundary $\partial\Omega$ is
\begin{equation}\label{Kirchhoff}
K({\sigma}):=(\divv{\sigma})\cdot{n}+\partial_t{\sigma}_{nt}.
\end{equation}
Here $\sigma_{nt}$ is required to be continuous at the turning points of $\Gamma_f$. In the literature, \eqref{clamped}, \eqref{simplysupported}, \eqref{free} are called clamped, simply supported, and free boundary conditions, respectively. We refer to Fig.~\ref{mixBCfigure} for an illustration. \begin{figure}[tbhp]
\centering
\includegraphics[width=6.5cm,height=5cm]{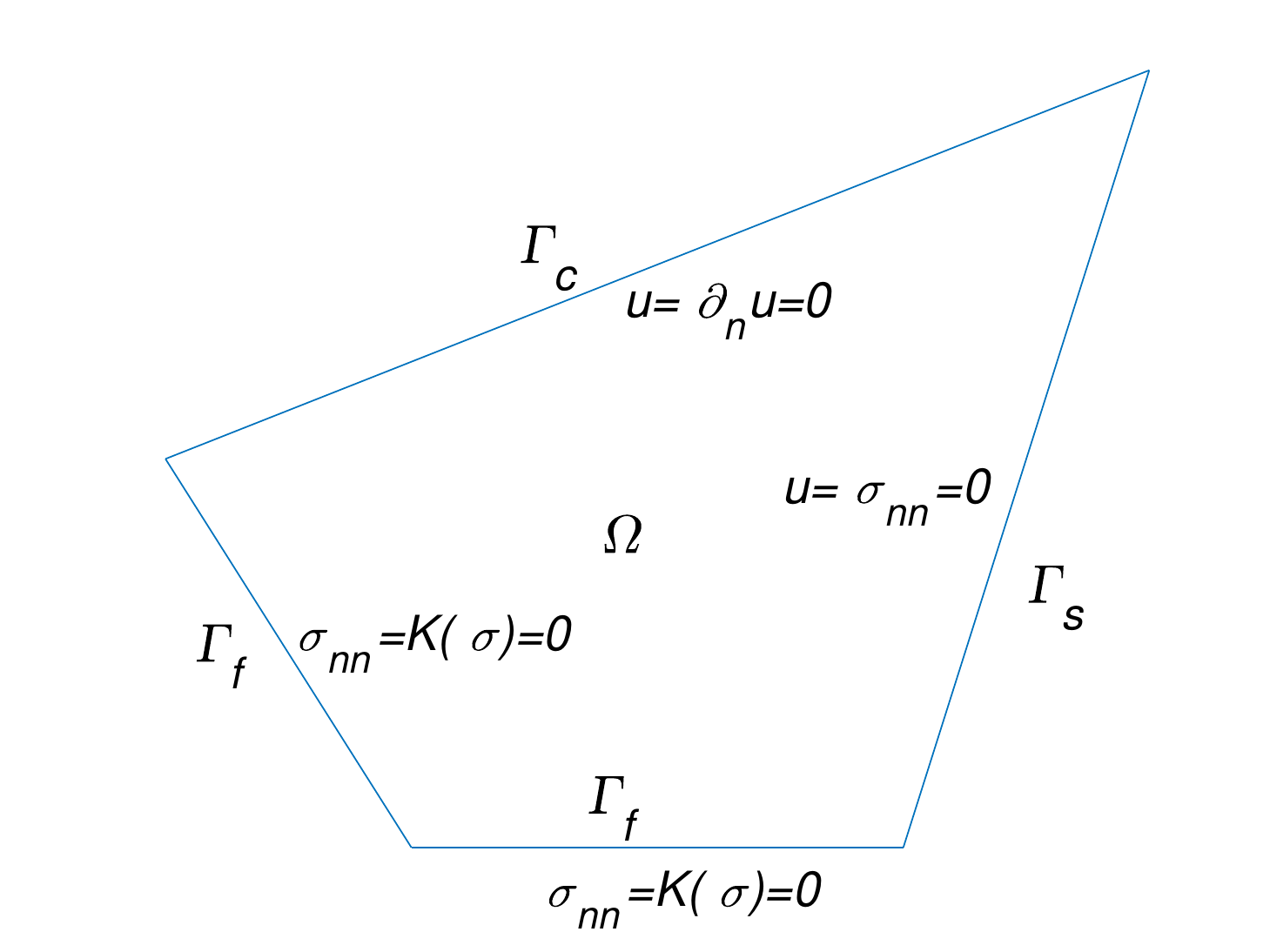}
\caption{{An example of the mixed boundary condition.}}
\label{mixBCfigure}
\end{figure}
In practice, the primal variational formulation of \eqref{fourthorder} using
\begin{equation*}
    \widetilde{U}:=\{v\in H^2(\Omega): v=0\text{ on }\Gamma_c\cup\Gamma_s,~\partial_nv=0\text{ on }\Gamma_c\}
\end{equation*}
could be discretized by conforming and nonconforming finite elements and dG methods
see, e.g., \cite{Morley1968,HansboLarson2002,BrennerSung2005,WangXu2006,GustafssonStenbergVideman2018}.

\subsection{Mixed formulation}
Let $\mathcal{T}_{h}$ be a family of shape-regular triangulation of $\Omega$. In $\mathcal{T}_h$, let $\mathcal{E}_h$,  $\mathcal{E}_{h}^{o}$, $\mathcal{E}_{h}^c$, $\mathcal{E}_{h}^f$ denote the sets of edges, interior edges, boundary edges on $\Gamma_c$, boundary edges on $\Gamma_f$, respectively. Each edge $e\in\mathcal{E}_h$ is assigned with a unit normal vector ${n}_e$, and $n_e$ is chosen to be outward when $e$ is on $\partial\Omega.$ The normal-normal component, twisting component, and $K({\tau})$ of a symmetric $2\times2$ matrix ${\tau}$ on an edge $e$ is defined in a fashion similar to \eqref{taunnnt} and \eqref{Kirchhoff} based on ${n}_e$ and ${t}_e$. 
Given a 2d subdomain or 1d submanifold $\Omega_0\subset\Omega$, let $\|\cdot\|_{m,\Omega_0}$ denote the $H^m(\Omega_0)$ Sobolev norm, $|\cdot|_{m,\Omega_0}$ the $H^m(\Omega_0)$ semi-norm, and $(\cdot,\cdot)_{\Omega_0}$ the $L^{2}(\Omega_0)$ inner product. We adopt the notation  \begin{align*}
    \|\cdot\|_{\Omega_0}=\|\cdot\|_{0,\Omega_0},\quad \|\cdot\|_{m}=\|\cdot\|_{m,\Omega},\quad|\cdot|_{m}=|\cdot|_{m,\Omega},\quad(\cdot,\cdot)=(\cdot,\cdot)_{\Omega}.
\end{align*} 
For a space $V$, we define
\begin{align*}
&[V]_{s}^{4}:=\left\{v=\begin{pmatrix}v_{11}&v_{12}\\v_{21}&v_{22}\end{pmatrix}: v_{12}=v_{21}, v_{ij}\in V,\ i,j=1,2\right\},\\
&[V]^{n}:=\{v=(v_{1},\ldots,v_{n})^\intercal: v_{i}\in V, 1\leq i\leq n\}.
\end{align*}
The fourth order equation in \eqref{fourthorder} could be recast into
\begin{subequations}\label{mix:c}
\begin{align}
\mathbb{M}^{-1}{\sigma}&=\nabla^2 u,\\
\divs\divv{{\sigma}}&=f.
\end{align}
\end{subequations}
Direct calculation shows that the inverse of $\mathbb{M}$ is 
\begin{equation*}
    \mathbb{M}^{-1}{\tau}=\frac{12}{Ed^3}\big((1+\nu){\tau}-\nu\tr( {\tau}){\delta}\big).
\end{equation*}
We shall make use of piecewise operators  $\divv_h$, $\nabla_h$, $\nabla_h^2$, i.e., for each $T\in\mathcal{T}_h,$
\begin{equation*}
    (\divv_h{\tau}_h)|_T=\divv({\tau}_h|_T),\quad(\nabla_hv_h)|_T=\nabla(v_h|_T),\quad(\nabla^2_hv_h)|_T=\nabla^2(v_h|_T).
\end{equation*}
Given $m>0$ and a fixed $s_0>0$,
we introduce the spaces
\begin{align*}
    H^m(\mathcal{T}_h):=&\{v\in L^2(\Omega): v|_T\in H^m(T)~\forall T\in\mathcal{T}_h\},\\
    U:=&\{v\in H^1(\Omega): v=0\text{ on }\Gamma_c\cup\Gamma_s\},\\
    {\Sigma}:=&\{{\tau}\in[L^2(\Omega)]_s^4: {\tau}|_T\in [H^{\frac{1}{2}+s_0}(T)]_s^4~\forall T\in\mathcal{T}_h,\\
    &\quad{\tau}_{nn}=0\text{ on }\Gamma_s\cup\Gamma_f, {\tau}_{nn}\text{ is single-valued on each }e\in\mathcal{E}_{h}^o\},
\end{align*}
and the following bilinear forms
\begin{equation}\label{bform}
\begin{aligned}
a({\sigma},{\tau}):&=(\mathbb{M}^{-1}{\sigma},{\tau}),\\
b_h({\tau},v):&=(-{\tau},\nabla^2_hv)+\langle{\tau}_{nn},\partial_n v\rangle_{\partial\mathcal{T}_h},\\
B_h({\sigma},u;{\tau},v):&=a({\sigma},{\tau})+b_h({\tau},u)+b_h({\sigma},v),
\end{aligned}
\end{equation} 
with the $L^2$ inner product on $\partial\mathcal{T}_h$
\begin{equation*}
    \langle\cdot,\cdot\rangle_{\partial\mathcal{T}_h}=\sum_{T\in\mathcal{T}_h}\langle\cdot,\cdot\rangle_{\partial T}.
\end{equation*}
For $\sigma_{nt}\in C^0(\overline{\Omega})$, $v\in H^2(\mathcal{T}_h)$, $\tau\in [H^1(\mathcal{T}_h)]_s^4$, element-wise integration-by-parts shows that
\begin{subequations}
\begin{align}
 (\divs\divv\sigma,v)&=(\sigma,\nabla_h^2v)+\langle K(\sigma),v\rangle_{\partial\mathcal{T}_h}-\langle \sigma_{nn},\partial_nv\rangle_{\partial\mathcal{T}_h},\label{ddsigma}\\
   b_h({\tau},v)&=(\divv{\tau},\nabla v)-\langle{\tau}_{nt},\partial_t v\rangle_{\partial\mathcal{T}_h}.\label{bh2}   
\end{align}    
\end{subequations}
Therefore with \eqref{mix:c}, \eqref{ddsigma}, and sufficiently regular $\sigma$, it follows that
\begin{subequations}\label{ctsHHJ}
\begin{align}
a({\sigma},{\tau})+b_h({\tau},u)&=0,\quad{\tau}\in{\Sigma},\label{ctsHHJ1}\\
b_h({\sigma},v)&=-(f,v),\quad v\in H^2(\mathcal{T}_h)\cap U.\label{ctsHHJ2}
\end{align}
\end{subequations}
Clearly $\mathbb{M}$ and $a$ are positive definite, i.e., there exist constants $m_0>0,$ $a_0>0$ relying on $E, d, \nu$ such that
\begin{equation}\label{a}
    (\mathbb{M}{\tau},{\tau})\geq m_0\|{\tau}\|_0,\quad a({\tau},{\tau})\geq a_0\|{\tau}\|_0,\quad\forall\tau\in [L^2(\Omega)]_s^4.
\end{equation}

\subsection{HHJ method}
Let $\mathcal{P}_m(\Omega_0)$ be the set of polynomials of degrees at most $m$ on $\Omega_0$.  For a fixed integer $r\geq1$, we make use of several finite element spaces
\begin{equation*}
\begin{aligned}
&S_h=S_h^r=\{v_{h}\in H^{1}(\Omega): v_{h}|_T\in\mathcal{P}_r(T),~\forall T\in\mathcal{T}_h\},\\
&U_h=U^r_h=S_h^r\cap U,\\
&\Sigma_h=\{{\tau}_{h}\in{\Sigma}: 
{\tau}_{h}|_T\in[\mathcal{P}_{r-1}(T)]_{s}^{4},~\forall T\in\mathcal{T}_h\}.
\end{aligned}
\end{equation*}
The HHJ mixed method for solving \eqref{mix:c} is to find $({\sigma}_h,u_h)
\in{\Sigma}_h\times U_h$, such that
\begin{subequations}\label{HHJ}
\begin{align}
a({\sigma}_h,{\tau}_h)+b_{h}({\tau}_{h},u_{h})&=0,\quad\forall{\tau}_h\in{\Sigma}_h,\label{HHJ1}\\
b_{h}({\sigma}_h,v_h)&=-(f,v_h),\quad\forall v_h\in U_h.\label{HHJ2}
\end{align}
\end{subequations} Subtracting \eqref{HHJ} from \eqref{ctsHHJ} leads to 
\begin{subequations}\label{error}
\begin{align}
a({\sigma}-{\sigma}_h,{\tau}_h)+b_{h}({\tau}_{h},u-u_{h})&=0,\quad\forall{\tau}_h\in{\Sigma}_h,\label{error1}\\
b_{h}({\sigma}-{\sigma}_h,v_h)&=0,\quad\forall v_h\in U_h.\label{error2}
\end{align}
\end{subequations}

The HHJ element admits a pair of commuting interpolations, {see, e.g., \cite{BabuskaOsbornPitkaranta1980,Comodi1989,BoffiBrezziFortin2013}}. The first one is the modified Lagrange interpolation $I_h=I_h^r: C^0(\overline{\Omega})\rightarrow S_h^r$ given by 
\begin{equation}\label{Ih}
    \begin{aligned}
    &(I_hv)(z)=v(z)\text{ at each vertex $z$ in }\mathcal{T}_h,\\
    &\int_e(I_hv)\phi ds=\int_ev\phi ds,\quad\forall \phi\in\mathcal{P}_{r-2}(e),~\forall e\in\mathcal{E}_h,\\
    &\int_T(I_hv)\psi dx=\int_Tv\psi dx,\quad\forall \psi\in\mathcal{P}_{r-3}(T),~\forall T\in\mathcal{T}_h.
    \end{aligned}
\end{equation}
{For the constant $s_0>0$}, the second interpolation $\Pi_h: [H^{\frac{1}{2}+s_0}(\Omega)]_{s}^{4}\cap{\Sigma}\rightarrow{\Sigma}_h$ is defined as
\begin{equation}\label{Pih}
\begin{aligned}
&\int_e(\Pi_h\tau)_{nn}\phi ds=\int_e\tau_{nn}\phi ds,
\quad\forall\phi\in\mathcal{P}_{r-1}(e),~\forall e\in\mathcal{E}_h,\\
&\int_T(\Pi_h\tau)\psi dx=\int_T\tau\psi dx,
\quad\forall\psi\in\mathcal{P}_{r-2}(T),~\forall T\in\mathcal{T}_h.
\end{aligned}
\end{equation}
It is readily checked that
\begin{subequations}
\begin{align}
    &b_h(\tau_h,I_hv)=b_h({\tau}_{h},v),\quad\forall{\tau}_h\in{\Sigma}_h,~\forall v\in C^0(\overline{\Omega}),\label{commIh}\\
    &b_h(\Pi_{h}\tau,v_h)=b_h(\tau,v_h),\quad\forall\tau\in[H^{\frac{1}{2}+s_0}(\Omega)]_{s}^{4},~ \forall v_h\in U_h.\label{commPih}
\end{align}
\end{subequations}
{Recall that $u$ is the solution of the fourth order problem \eqref{fourthorder} in the distributional sense. Therefore $u\in H^2(\Omega)$ and the nodal interpolant $I_hu$ is well-defined due to the Sobolev embedding $H^2(\Omega)\hookrightarrow C^0(\overline{\Omega})$.}

Throughout the rest of this paper, we say $c_1\lesssim c_2$ (resp.~$c_1\gtrsim c_2$) if $c_1\leq Cc_2$ (resp.~$c_1\gtrsim Cc_2$), where $C$ is a positive absolute constant relying solely on $\Omega$, $E$, $d$, $r$, and the shape-regularity of $\mathcal{T}_h$. In the analysis we may also use $C$, $C_1$, $C_2$, $\ldots$ to denote such absolute constants independent of mesh sizes. We say $c_1\simeq c_2$ provided $c_1\lesssim c_2$ and $c_2\lesssim c_1$. 

Let $h_\mathcal{T}$ and $h_{\mathcal{E}}$ be the mesh size functions such that $h_\mathcal{T}|_T=h_T:=\text{area}(T)^\frac{1}{2}$ for each $T\in\mathcal{T}_h$ and $h_{\mathcal{E}}|_e=h_e$ is the length of $e$ for each $e\in\mathcal{E}_h$.  Let $h=\max_{T\in\mathcal{T}_h}h_T$ be the maximum mesh size.
The following a priori convergence 
\begin{subequations}\label{apriori}
\begin{align}
    |u-u_h|_1&\lesssim h^r\big(|\sigma|_{r}+|u|_{r+1}\big),\label{apriori1}\\
    \|\sigma-\sigma_h\|_0&\lesssim h^r|\sigma|_r\label{apriori2}
    \end{align}
\end{subequations}
{could be found from \cite{BabuskaOsbornPitkaranta1980} and (10.3.49), (10.3.50) of \cite{BoffiBrezziFortin2013}.}

\section{A posteriori error estimation for $\|\cdot\|_0\times\|\cdot\|_{2,h}$}\label{sec02h}
Given a subset $\hat{\mathcal{E}}_h\subseteq\mathcal{E}_h$ , the $L^2$ norm on $\hat{\mathcal{E}}_h$ is 
\begin{equation*}
    \|\cdot\|_{\hat{\mathcal{E}}_h}:=\big(\sum_{e\in\hat{\mathcal{E}}_h}\|\cdot\|^2_e\big)^\frac{1}{2}.
\end{equation*}
Let $\Gamma^c_h:=\mathcal{E}_h^o\cup\mathcal{E}_h^c$ and $\Gamma^f_h:=\mathcal{E}_h^o\cup\mathcal{E}_h^f$.
On an interior edge $e\ni x,$ let $\llbracket\omega\rrbracket(x):=\lim_{s\to0^+}\big[\omega(x+sn_e)-\omega(x-sn_e)\big]$ denote the jump function of $\omega$ across $e$. On an boundary edge $e\subset\partial\Omega,$ $\llbracket\omega\rrbracket$ is the restriction of $\omega$ on $e.$
Following \cite{BabuskaOsbornPitkaranta1980}, we define the mesh-dependent $H^2$-norms
\begin{align*}
    \|v_h\|_{2,h}^2&=\|\nabla^2_hv_h\|^2+\|h_\mathcal{E}^{-\frac{1}{2}}\llbracket\partial_nv_h\rrbracket\|^2_{\Gamma^c_h},\\
    \|v_h\|_{2,h,T}^2&=\|\nabla^2v_h\|_T^2+\|h_\mathcal{E}^{-\frac{1}{2}}\llbracket\partial_nv_h\rrbracket\|^2_{\partial T\backslash(\Gamma_f\cup\Gamma_s)},\quad T\in\mathcal{T}_h.
\end{align*}
In this section, we present an error estimator for controlling the $\|\cdot\|_{0}$-norm of the moment error and $\|\cdot\|_{2,h}$-norm of the deflection error. The original solution $u_h$ does not optimally converge to $u$ in the $\|\cdot\|_{2,h}$-norm. For example, $\|u-u_h\|_{2,h}$ is not convergent at all in the lowest order case $r=1$. To remedy the situation, we reconstruct a more accurate deflection $u_h^*$ by postprocessing $u_h$ and then derive a priori and a posteriori error estimates for $\|\sigma-\sigma_h\|_0+\|u-u_h^*\|_{2,h}.$

Let $I$ be the identity mapping and $U_h^*=U_h^{r+1}$. We utilize the space of bubble functions
\begin{equation*}
W_h:=(I-I^r_h)U_h^{r+1}.    
\end{equation*}
For example, when $r=1,$ $W_h$ is spanned by edge bubbles (a function of unit size locally supported on two elements sharing an edge). 
The postprocessed deflection is given by $$u_h^*:=u_h+w_h\in U_h^{r+1},$$ where the high frequency component $w_h\in W_h$ solves the global problem
\begin{equation}\label{wh}
(\mathbb{M}\nabla_h^2w_h,\nabla_h^2v_h)=(\sigma_h-\mathbb{M}\nabla_h^2u_h,\nabla_h^2v_h),\quad\forall v_h\in W_h.
\end{equation}
Clearly \eqref{wh} is well defined.
By the construction, we obtain the orthogonality 
\begin{equation}\label{L2}
    (\mathbb{M}\nabla_h^2u^*_h,\nabla_h^2v_h)=(\sigma_h,\nabla_h^2v_h),\quad v_h\in W_h.
\end{equation}
The reconstruction of $u_h^*$ is slightly different from the element-wise postprocessing in \cite{Stenberg1991}. In particular, it is noted that $u_h^*$ is $C^0$ continuous while the postprocessed deflection in \cite{Stenberg1991} is broken and completely discontinuous. Similar discontinuous postprocessing deflections in dG methods could be found in e.g., \cite{SunHuang2018}. Such  discontinuity of deflections is {somehow} undesirable in engineering analysis, {e.g., when visualizing physical quantities dependent on discontinuous deflections.} In theory, the  $C^0$ continuity of $u_h^*$ will greatly facilitate subsequent a priori and a posteriori error analysis.
\begin{remark}
Although \eqref{wh} is global, the basis of $W_h$ consists of edge and volume bubble functions. It is well known that the stiffness matrix of \eqref{wh} under such a basis is spectrally equivalent to its diagonal. As a consequence, \eqref{wh} could be optimally solved by the diagonally preconditioned conjugate gradient method within uniformly bounded number of iterations independent of the mesh size. Therefore the computational cost of $u_h^*$ is comparable to the local postprocessing scheme in \cite{Stenberg1991}. The hierarchical decomposition $U_h^*=U_h\oplus W_h$ and similar well-conditioned global problems are also used in the hierarchical basis error estimator, see \cite{BankSmith1993,Bank1996}. {Postprocessing for numerical solutions of elliptic equations by global Ritz projection could also be found in, e.g., \cite{DednerGiesselmannPryerRyan2020}.}
\end{remark}

A key tool in our analysis is the following approximation result, which is a special case of Lemma 3.1 in \cite{GeorgoulisHoustonVirtanen2011}. For any $H^1$ conforming $v_h\in U^r_h,$
there exists $\phi\in \widetilde{U}$ which is a $C^1$ piecewise polynomial satisfying
\begin{equation}\label{approx2h}
    \|h_\mathcal{T}^{-2}(v_h-\phi)\|+\|h_\mathcal{T}^{-1}\nabla_h(v_h-\phi)\|+\|\nabla_h^2(v_h-\phi)\|\leq C\|h_\mathcal{E}^{-\frac{1}{2}}\llbracket\partial_n v_h\rrbracket\|_{\Gamma^c_h}.
\end{equation}
The following lemma is a direct consequence of \eqref{approx2h}.
\begin{lemma}\label{approxIh}
For any $v_h\in U_h^r$,
we have
\begin{equation*}
    \|h_\mathcal{T}^{-2}(v_h-I_hv_h)\|\lesssim \|v_h\|_{2,h}.
\end{equation*}
\end{lemma}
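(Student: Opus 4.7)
The plan is to bridge $v_h$ and $I_h v_h$ through a $C^1$ piecewise polynomial surrogate supplied by \eqref{approx2h}. Given $v_h\in U_h^r$, I first invoke \eqref{approx2h} to obtain $\phi\in\widetilde{U}$ with
\begin{equation*}
\|h_\mathcal{T}^{-2}(v_h-\phi)\|+\|h_\mathcal{T}^{-1}\nabla_h(v_h-\phi)\|+\|\nabla_h^2(v_h-\phi)\|\lesssim\|v_h\|_{2,h},
\end{equation*}
and note that a triangle inequality in the last term also yields $|\phi|_{2,h}\lesssim\|v_h\|_{2,h}$.

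Next, by linearity of $I_h$, I write the algebraic splitting
\begin{equation*}
v_h - I_h v_h \;=\; (v_h-\phi) \;+\; (\phi - I_h\phi) \;-\; I_h(v_h - \phi),
\end{equation*}
and estimate each summand in the $h_\mathcal{T}^{-2}$-weighted $L^2$ norm. The first summand is controlled directly by \eqref{approx2h}. For the second, because $\phi|_T$ is a polynomial (hence in $H^2(T)$), the classical Lagrange-type interpolation estimate $\|\phi - I_h\phi\|_T\lesssim h_T^2|\phi|_{2,T}$ gives $\|h_\mathcal{T}^{-2}(\phi - I_h\phi)\|\lesssim|\phi|_{2,h}\lesssim\|v_h\|_{2,h}$.

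The crux is the third summand, which requires an $L^2$-stability estimate for $I_h$ acting on the nonsmooth argument $v_h-\phi$. By a reference-element argument combined with affine scaling, I would verify
\begin{equation*}
\|I_h w\|_T\lesssim\|w\|_T+h_T|w|_{1,T}+h_T^2|w|_{2,T},\quad w\in H^2(T).
\end{equation*}
This holds because every degree of freedom of $I_h$ on the reference triangle $\hat T$ (vertex values together with $\mathcal{P}_{r-2}$ edge moments and $\mathcal{P}_{r-3}$ cell moments) is a bounded functional on $H^2(\hat T)$ via the Sobolev embeddings $H^2(\hat T)\hookrightarrow C^0(\hat T)$ and $H^2(\hat T)\hookrightarrow L^2(\hat e)$; norm equivalence on the finite-dimensional image then bounds $\|I_h\hat w\|_{\hat T}$ by $\|\hat w\|_{H^2(\hat T)}$, and affine scaling produces the displayed inequality. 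Inserting $w=v_h-\phi$, multiplying by $h_T^{-2}$ and summing, the three resulting pieces coincide with those controlled by \eqref{approx2h}, yielding $\|h_\mathcal{T}^{-2}I_h(v_h-\phi)\|\lesssim\|v_h\|_{2,h}$. Combining the three bounds completes the proof.

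The only real obstacle is the reference-element $H^2$-stability of $I_h$; once that step is in hand, the remainder is a direct reduction to \eqref{approx2h} together with standard polynomial interpolation on each element.
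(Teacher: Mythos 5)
Your proposal is correct and follows essentially the same route as the paper: bridging through the $C^1$ surrogate $\phi$ from \eqref{approx2h}, applying the standard interpolation error estimate to $\phi-I_h\phi$, and invoking the local $H^2$-stability bound $\|I_h\psi\|_T\lesssim\|\psi\|_T+h_T|\psi|_{1,T}+h_T^2|\psi|_{2,T}$ for the term $I_h(v_h-\phi)$. The only difference is that you spell out the reference-element justification of that stability estimate, which the paper simply asserts.
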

\begin{proof}
For $\psi\in H^2(T)$ and $T\in\mathcal{T}_h,$ the stability of the Lagrange interpolation $I_h$ implies
\begin{equation}\label{Ihstable}
    \|I_h\psi\|_T\lesssim \|\psi\|_T+h_T|\psi|_{1,T}+h_T^2|\psi|_{2,T}.
\end{equation}
Let $\phi$ be given in \eqref{approx2h}. Using the triangle inequality, \eqref{Ihstable}, {and the standard interpolation error estimate for $I_h\phi$,} we have 
\begin{equation*}
\begin{aligned}
    &\|h_\mathcal{T}^{-2}(v_h-I_hv_h)\|\leq\|h_\mathcal{T}^{-2}(v_h-\phi)\|+\|h_\mathcal{T}^{-2}(\phi-I_h\phi)\|+\|h_\mathcal{T}^{-2}I_h(\phi-v_h)\|\\
    &\leq\|h_\mathcal{T}^{-2}(v_h-\phi)\|+C\|\nabla^2\phi\|+\|h_\mathcal{T}^{-1}\nabla(v_h-\phi)\|+\|\nabla^2_h(v_h-\phi)\|\\
    &\leq\|h_\mathcal{T}^{-2}(v_h-\phi)\|+C\|\nabla_h^2(\phi-v_h)\|+C\|\nabla_h^2v_h\|+\|h_\mathcal{T}^{-1}\nabla(v_h-\phi)\|.
\end{aligned}
\end{equation*}
Combining the above inequality with \eqref{approx2h} and \eqref{Ihstable} completes the proof.
\qed\end{proof}

In the sequel, we need the trace inequality
\begin{equation}\label{trace}
    \|v\|_{\partial T}\lesssim h_T^{-\frac{1}{2}}\|v\|_T+h_T^{\frac{1}{2}}\|\nabla v\|_{T},\quad\forall v\in H^1(T).
\end{equation}

\subsection{A priori error estimation under $\|\cdot\|_0\times\|\cdot\|_{2,h}$} 
It is shown in \cite{HHX2011} that 
\begin{equation}\label{infsup0}
    \sup_{0\neq\tau_h\in \Sigma_h}\frac{b_h(\tau_h,v_h)}{\|{\tau}_h\|}\gtrsim\|v_h\|_{2,h},\quad\forall v_h\in U_h.
\end{equation}
A combination of  \eqref{infsup0}, the positive definiteness of $a$, and Babu\v{s}ka--Brezzi theory then yields
\begin{equation}\label{infsup1}
    \inf_{0\neq(\xi_h,w_h)\in\Sigma_h\times U_h}\sup_{0\neq(\tau_h,v_h)\in{\Sigma}_h\times U_h}\frac{B_h(\xi_h,w_h;\tau_h,v_h)}{\big(\|\xi_h\|+\|w_h\|_{2,h}\big)\big(\|\tau_h\|+\|v_h\|_{2,h}\big)}\gtrsim1,
\end{equation}
see \cite{Babuska1973,Brezzi1974,XuZikatanov2003}.
We introduce the modified bilinear form
\begin{equation*}
    \widetilde{B}_h(\xi_h,w^*_h;\tau_h,v^*_h)=B_h(\xi_h,w^*_h;\tau_h,v^*_h)+(\mathbb{M}\nabla_h^2w^*_h-\xi_h,\nabla_h^2(I-I_h)v^*_h),
\end{equation*}
see \cite{LovadinaStenberg2006} for a similar bilinear form of mixed methods for Poisson's equation.
The next lemma presents an inf-sup condition of $\widetilde{B}_h$.
\begin{lemma}\label{infsup2}
It holds that
\begin{equation*}
    \inf_{0\neq({\xi}_h,w_h^*)\in\Sigma_h\times U_h^*}\sup_{0\neq(\tau_h,v_h^*)\in\Sigma_h\times U_h^*}\frac{\widetilde{B}_h(\xi_h,w_h^*;\tau_h,v_h^*)}{\big(\|\xi_h\|+\|w_h^*\|_{2,h}\big)\big(\|{\tau}_h\|+\|v_h^*\|_{2,h}\big)}\gtrsim1.
\end{equation*}
\end{lemma}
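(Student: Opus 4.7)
The plan is to bootstrap the inf-sup \eqref{infsup1} on $\Sigma_h \times U_h$ to the enriched pair $\Sigma_h \times U_h^*$ by using the direct sum $U_h^* = U_h \oplus W_h$ associated with $I_h=I_h^r$ and choosing the test function triangularly with respect to this decomposition. Given $(\xi_h, w_h^*) \in \Sigma_h \times U_h^*$, I would set $w_h := I_h w_h^* \in U_h$ and $w_h^b := w_h^* - w_h \in W_h$, and then apply \eqref{infsup1} to the pair $(\xi_h, w_h)$ to obtain $(\tau_h^0, v_h^0) \in \Sigma_h \times U_h$ with $\|\tau_h^0\| + \|v_h^0\|_{2,h} = \|\xi_h\| + \|w_h\|_{2,h}$ and $B_h(\xi_h, w_h; \tau_h^0, v_h^0) \gtrsim (\|\xi_h\| + \|w_h\|_{2,h})^2$. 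I would then propose as test for $\widetilde{B}_h$ the pair $(\tau_h, v_h^*) := (\tau_h^0,\; v_h^0 + \mu w_h^b)$ with a small parameter $\mu>0$ to be fixed later.

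The second step is a short simplification using the commuting property \eqref{commIh}: $b_h(\tau_h^0, w_h^*) = b_h(\tau_h^0, w_h)$ and $b_h(\xi_h, w_h^b) = 0$, together with the identity $(I - I_h)(v_h^0 + \mu w_h^b) = \mu w_h^b$ (because $I_h$ reproduces $U_h^r$ and $I_h\circ(I-I_h)=0$). These collapse $\widetilde{B}_h$ to
\begin{equation*}
\widetilde{B}_h(\xi_h, w_h^*;\tau_h, v_h^*)
= B_h(\xi_h, w_h; \tau_h^0, v_h^0) + \mu(\mathbb{M}\nabla_h^2 w_h^*, \nabla_h^2 w_h^b) - \mu(\xi_h, \nabla_h^2 w_h^b).
\end{equation*}
Writing $\nabla_h^2 w_h^* = \nabla_h^2 w_h + \nabla_h^2 w_h^b$, the diagonal piece satisfies $\mu(\mathbb{M}\nabla_h^2 w_h^b, \nabla_h^2 w_h^b) \geq \mu m_0\|\nabla_h^2 w_h^b\|^2$ by \eqref{a}, while Young's inequality absorbs the two cross contributions into a small multiple of $\|w_h\|_{2,h}^2 + \|\xi_h\|^2$. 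Choosing $\mu$ small enough to be dominated by the inf-sup constant $\beta$ produces
\begin{equation*}
\widetilde{B}_h(\xi_h, w_h^*;\tau_h, v_h^*) \gtrsim \|\xi_h\|^2 + \|w_h\|_{2,h}^2 + \|\nabla_h^2 w_h^b\|^2.
\end{equation*}

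The main obstacle, and the step that deserves the most care, is converting this lower bound into one scaling with the full norm $(\|\xi_h\|+\|w_h^*\|_{2,h})^2$, and simultaneously controlling the test-function side $\|v_h^*\|_{2,h} \leq \|v_h^0\|_{2,h} + \mu\|w_h^b\|_{2,h}$ by $\|\xi_h\|+\|w_h^*\|_{2,h}$. Both reductions are equivalent to the discrete norm equivalence
\begin{equation*}
\|w_h^b\|_{2,h} \simeq \|\nabla_h^2 w_h^b\| \qquad \text{on } W_h,
\end{equation*}
whose delicate direction is the jump bound $\|h_\mathcal{E}^{-1/2}\llbracket \partial_n w_h^b\rrbracket\|_{\Gamma_h^c} \lesssim \|\nabla_h^2 w_h^b\|$. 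I would obtain this from the enriching approximation \eqref{approx2h} applied at polynomial degree $r+1$, Lemma \ref{approxIh} applied to $w_h^* \in U_h^{r+1}$ (to give $\|h_\mathcal{T}^{-2}w_h^b\| \lesssim \|w_h^*\|_{2,h}$), and polynomial trace/inverse estimates on $W_h$. Once the norm equivalence is in hand, $\|w_h^*\|_{2,h}^2 \lesssim \|w_h\|_{2,h}^2 + \|\nabla_h^2 w_h^b\|^2$ follows by the triangle inequality, $\|v_h^*\|_{2,h} \lesssim \|\xi_h\|+\|w_h^*\|_{2,h}$ follows from the normalization of $(\tau_h^0, v_h^0)$, and combining with the coercivity estimate above closes the inf-sup.
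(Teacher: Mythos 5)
Your proposal is correct and follows essentially the same route as the paper: decompose $U_h^*=U_h\oplus W_h$, bootstrap from \eqref{infsup1} on the coarse part, add a small multiple of the bubble component as test function, use \eqref{commIh} to collapse $\widetilde{B}_h$, and absorb the cross terms by Young's inequality; your use of $b_h(\xi_h,w_h^b)=0$ is a slightly cleaner simplification than the paper's direct bound on the edge term. The one place your sketch wobbles is the norm equivalence $\|w_h^b\|_{2,h}\simeq\|\nabla_h^2 w_h^b\|$ on $W_h$: invoking Lemma \ref{approxIh} only bounds things by $\|w_h^*\|_{2,h}$ (which contains the jump term you are trying to control), whereas the correct and simple argument is the paper's \eqref{localIIhwhstar}--\eqref{edgeIIhwhstar}, a local scaling exploiting that $W_h$ functions vanish at vertices followed by the trace inequality \eqref{trace}.
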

\begin{proof}
Given $(\xi_h,w_h^*)\in\Sigma_h\times U_h^*$, \eqref{infsup1} implies that there exist $(\tau_h,v_h)\in\Sigma_h\times U_h$ such that
\begin{subequations}\label{Bhterm}
    \begin{align}
    \|\tau_h\|+\|v_h\|_{2,h}&\lesssim1,\label{Bhterm1}\\
B_h(\xi_h,I_hw_h^*;\tau_h,v_h)&\gtrsim\|\xi_h\|+\|I_hw_h^*\|_{2,h}.\label{Bhterm2}
    \end{align}
\end{subequations}
By  \eqref{commIh}, \eqref{Bhterm2}, we obtain
\begin{equation}\label{Btilde1}
    \begin{aligned}
&\widetilde{B}_h(\xi_h,w_h^*;\tau_h,v_h)=a(\xi_h,\tau_h)+b_h(\tau_h,w^*_h)+b_h(\xi_h,v_h)\\
&=a(\xi_h,\tau_h)+b_h(\tau_h,I_hw^*_h)+b_h(\xi_h,v_h)\gtrsim\|\xi_h\|+\|I_hw_h^*\|_{2,h}.
    \end{aligned}
\end{equation}
Let $\tilde{v}^*_h=\frac{(I-I_h)w^*_h}{\|\nabla^2_h(I-I_h)w^*_h\|}$. We use \eqref{trace} and the Cauchy--Schwarz inequality to obtain
\begin{equation}\label{Btilde2}
    \begin{aligned}
&\widetilde{B}_h(\xi_h,w_h^*;0,\tilde{v}^*_h)=b_h(\xi_h,\tilde{v}^*_h)+(\mathbb{M}\nabla_h^2w^*_h-\xi_h,\nabla_h^2\tilde{v}^*_h)\\
&=-2(\xi_h,\nabla_h^2\tilde{v}^*_h)+\langle(\xi_h)_{nn},\partial_n \tilde{v}_h^*\rangle_{\partial\mathcal{T}_h}\\
&\quad+(\mathbb{M}\nabla_h^2(I-I_h)w^*_h,\nabla_h^2\tilde{v}^*_h)+(\mathbb{M}\nabla_h^2I_hw^*_h,\nabla_h^2\tilde{v}^*_h)\\
&\geq -2\|\xi_h\|-C^\frac{1}{2}_1\|\xi_h\|^\frac{1}{2}\|h_\mathcal{E}^{-\frac{1}{2}}\llbracket{\partial_n\tilde{v}_h^*}\rrbracket\|^\frac{1}{2}_{\Gamma^c_h}\\
&\quad+C_2\|\nabla_h^2(I-I_h)w^*_h\|-C_3\|\nabla^2_hI_hw_h^*\|.
    \end{aligned}
\end{equation}
For each $T\in\mathcal{T}_h,$ we note that $(I-I_h)w^*_h\in W_h$ vanishes at each vertex of $T.$ As a result, a local  scaling argument leads to
\begin{equation}\label{localIIhwhstar}
    \|\nabla(I-I_h)w^*_h\|_T\lesssim h_T\|\nabla^2(I-I_h)w^*_h\|_T,\quad\forall T\in\mathcal{T}_h.
\end{equation}
It follows from  \eqref{localIIhwhstar} and \eqref{trace} that
\begin{equation}\label{edgeIIhwhstar}
    \|h_\mathcal{E}^{-\frac{1}{2}}\llbracket \partial_n(I-I_h)w^*_h\rrbracket\|_{\Gamma^c_h}\leq C_4\|\nabla_h^2(I-I_h)w^*_h\|.
\end{equation}
Therefore combining \eqref{Btilde2} with \eqref{edgeIIhwhstar} and using a mean value inequality, we obtain
\begin{equation}\label{Btilde3}
    \begin{aligned}
&\widetilde{B}_h(\xi_h,w_h^*;0,\tilde{v}^*_h)\geq-(2+\varepsilon^{-1})\|\xi_h\|\\
&\quad+\left(C_2-\frac{\varepsilon}{4}C_1C_4\right)\|\nabla_h^2(I-I_h)w^*_h\|-C_3\|\nabla^2_hI_hw_h^*\|,
    \end{aligned}
\end{equation}
where $\varepsilon>0$ is set to be $\varepsilon=\frac{2C_2}{C_1C_4}$.
Let $v_h^*=v_h+t\tilde{v}_h^*$. Using \eqref{Btilde1}, \eqref{Btilde3}, a sufficiently small $t>0$, \eqref{edgeIIhwhstar}, and a triangle inequality, we have
\begin{equation}\label{Btilde4}
\begin{aligned}
\widetilde{B}_h(\xi_h,w_h^*;\tau_h,v^*_h)&\gtrsim\|\xi_h\|+\|I_hw_h^*\|_{2,h}+\|\nabla_h^2(I-I_h)w^*_h\|\\
&\gtrsim\|\xi_h\|+\|w_h^*\|_{2,h}.
\end{aligned}
\end{equation}
On the other hand, combining  \eqref{Bhterm1}, the definition of ${v}^*_h$, and \eqref{edgeIIhwhstar}, we obtain
\begin{equation*}
 \|\tau_h\|+\|v^*_h\|_{2,h}\lesssim1
\end{equation*}
and complete the proof.
\qed\end{proof}

Motivated by Lemma \ref{infsup2}, we are able to derive a new quasi-optimal a priori error estimate under $\|\cdot\|_0\times\|\cdot\|_{2,h}$. In the following, let  $Q_h^r$ denote the $L^2$ projection onto the space of globally discontinuous and piecewise polynomials of degree at most $r$ on $\mathcal{T}_h$, and $Q_h^{-2}=Q_h^{-1}=0$.
\begin{theorem}\label{quasioptimal}
It holds that
\begin{equation*}
    \|{\sigma}-{\sigma}_h\|+\|u-u_h^*\|_{2,h}\lesssim\inf_{\tau_h\in\Sigma_h, v_h^*\in U_h^*}\big(\|\sigma-\tau_h\|+\|u-v_h^*\|_{2,h}\big)+\|h_\mathcal{T}^2(f-Q_h^{r-3}f)\|.
\end{equation*}
\end{theorem}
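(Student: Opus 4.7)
The plan is to combine the inf-sup stability of Lemma \ref{infsup2} with a Strang-type consistency calculation. Fix arbitrary $\tilde{\tau}_h \in \Sigma_h$ and $\tilde{v}_h \in U_h^*$, set $\xi_h := \tilde{\tau}_h - \sigma_h$ and $w_h^* := \tilde{v}_h - u_h^*$, and invoke Lemma \ref{infsup2} to produce a test pair $(\tau_h, v_h^*) \in \Sigma_h \times U_h^*$ of unit norm for which $\|\xi_h\| + \|w_h^*\|_{2,h} \lesssim \widetilde{B}_h(\xi_h, w_h^*; \tau_h, v_h^*)$. By linearity, the right-hand side splits as $\widetilde{B}_h(\tilde{\tau}_h - \sigma, \tilde{v}_h - u; \tau_h, v_h^*) + \widetilde{B}_h(\sigma - \sigma_h, u - u_h^*; \tau_h, v_h^*)$. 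After bounding each term and applying the triangle inequality $\|\sigma - \sigma_h\| + \|u - u_h^*\|_{2,h} \leq \|\sigma - \tilde{\tau}_h\| + \|u - \tilde{v}_h\|_{2,h} + \|\xi_h\| + \|w_h^*\|_{2,h}$, taking the infimum over $\tilde{\tau}_h$ and $\tilde{v}_h$ will deliver the claim.

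The consistency term $\widetilde{B}_h(\sigma - \sigma_h, u - u_h^*; \tau_h, v_h^*)$ is where the postprocessing orthogonality \eqref{L2} pays off. Since $\sigma = \mathbb{M}\nabla^2 u$, the correction piece of $\widetilde{B}_h$ equals $-(\mathbb{M}\nabla_h^2 u_h^* - \sigma_h, \nabla_h^2(I - I_h)v_h^*)$, which vanishes by \eqref{L2} because $(I - I_h)v_h^* \in W_h$. The surviving $B_h$-part is handled by the usual mixed-method consistency: the error equation \eqref{error1} together with $I_h(u_h^* - u_h) = 0$ and the commutativity \eqref{commIh} annihilates $a(\sigma - \sigma_h, \tau_h) + b_h(\tau_h, u - u_h^*)$; in $b_h(\sigma - \sigma_h, v_h^*)$ the piece against $I_h v_h^* \in U_h$ vanishes by Galerkin consistency of \eqref{HHJ2} and \eqref{ctsHHJ2}, and $b_h(\sigma_h, (I - I_h)v_h^*) = 0$ by \eqref{commIh}. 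What remains is $b_h(\sigma, (I - I_h)v_h^*)$, which via \eqref{ddsigma} equals $-(f, (I - I_h)v_h^*)$ once one observes that $K(\sigma)$ cancels across interior edges, vanishes on $\Gamma_f$, and $(I - I_h)v_h^* = 0$ on $\Gamma_c \cup \Gamma_s$. The volume-moment property \eqref{Ih} of $I_h$ then permits replacing $f$ with $f - Q_h^{r-3}f$, and the $U_h^{r+1}$-analogue of Lemma \ref{approxIh} yields $|\widetilde{B}_h(\sigma - \sigma_h, u - u_h^*; \tau_h, v_h^*)| \lesssim \|h_\mathcal{T}^2(f - Q_h^{r-3}f)\|\,\|v_h^*\|_{2,h}$.

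For the approximation term the goal is a continuity estimate $\widetilde{B}_h(\tilde{\tau}_h - \sigma, \tilde{v}_h - u; \tau_h, v_h^*) \lesssim (\|\sigma - \tilde{\tau}_h\| + \|u - \tilde{v}_h\|_{2,h})(\|\tau_h\| + \|v_h^*\|_{2,h})$. The $a$-term, the $\nabla_h^2$-parts of $b_h$, and the correction term of $\widetilde{B}_h$ are all immediate from Cauchy--Schwarz and \eqref{a}. The hard part will be the edge contribution $\langle (\tilde{\tau}_h - \sigma)_{nn}, \partial_n v_h^*\rangle_{\Gamma_h^c}$ inside $b_h(\tilde{\tau}_h - \sigma, v_h^*)$: because $\partial_n v_h^*|_e \in \mathcal{P}_r(e)$ sits one polynomial degree above the range preserved by the nn-projection \eqref{Pih} of $\Pi_h$, orthogonality does not kill it outright, and a naive trace inequality would introduce derivatives of $\sigma - \tilde{\tau}_h$. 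I expect to resolve this by specializing $\tilde{\tau}_h$ to $\Pi_h \sigma$ at the critical step, decomposing $\llbracket \partial_n v_h^*\rrbracket$ along $\mathcal{P}_{r-1}(e)$ and its $L^2(e)$-orthogonal complement, and absorbing the complementary polynomial piece through a scaled inverse/trace estimate against the weighted edge-jump norm already controlled by $\|v_h^*\|_{2,h}$; the approximation properties of $\Pi_h$ then bound $\|\sigma - \Pi_h \sigma\|$ in terms of the claimed $L^2$-infimum, closing the proof.
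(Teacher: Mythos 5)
Your proposal follows essentially the same route as the paper's proof: the inf-sup condition of Lemma \ref{infsup2}, a Strang-type splitting into an approximation term and a consistency term, and the identity
\begin{equation*}
\widetilde{B}_h(\sigma-\sigma_h,u-u_h^*;\tau_h,v_h^*)=-(f-Q_h^{r-3}f,\,v_h^*-I_hv_h^*),
\end{equation*}
which your second paragraph derives exactly as the paper does in \eqref{inconsistency} (via \eqref{error1}, \eqref{commIh}, \eqref{error2}, \eqref{ctsHHJ2}, the orthogonality \eqref{L2}, $I_hu_h^*=u_h$, and the volume moments in \eqref{Ih}), followed by the degree-$(r+1)$ version of Lemma \ref{approxIh}. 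That half of your argument is correct and matches the paper step for step.

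The one place you diverge is that you unpack the Strang lemma by hand and therefore must face the boundedness of $\widetilde{B}_h(\tilde{\tau}_h-\sigma,\tilde{v}_h-u;\cdot,\cdot)$, which the paper delegates to its citation of a Strang lemma for nonconforming methods. Your diagnosis of the obstruction is right: the edge pairing $\sum_{e\in\Gamma^c_h}\langle(\tilde{\tau}_h-\sigma)_{nn},\llbracket\partial_nv_h^*\rrbracket\rangle_e$ is not controlled by $\|\tilde{\tau}_h-\sigma\|\,\|v_h^*\|_{2,h}$, so continuity in the pure $L^2$ norm of the first slot fails; the natural norm here is the mesh-dependent $\|\tau\|^2+\sum_eh_e\|\tau_{nn}\|^2_e$ of Babu\v{s}ka--Osborn--Pitk\"aranta. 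However, your sketched repair does not close as stated. After taking $\tilde{\tau}_h=\Pi_h\sigma$ and using the $\mathcal{P}_{r-1}(e)$-orthogonality from \eqref{Pih}, the surviving top-degree component of $\llbracket\partial_nv_h^*\rrbracket$ still forces a trace bound of the form $h_e^{1/2}\|(\sigma-\Pi_h\sigma)_{nn}\|_e\lesssim\|\sigma-\Pi_h\sigma\|_T+h_T|\sigma-\Pi_h\sigma|_{1,T}$; neither the $H^1$-weighted term nor $\|\sigma-\Pi_h\sigma\|$ itself reduces to the claimed infimum $\inf_{\tau_h\in\Sigma_h}\|\sigma-\tau_h\|$, since $\Pi_h$ is not $L^2$-stable. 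What you actually obtain is the estimate with the $\sigma$-infimum replaced by $\|\sigma-\Pi_h\sigma\|+\|h_\mathcal{T}\nabla_h(\sigma-\Pi_h\sigma)\|$ (equivalently, the infimum in the mesh-dependent norm), which still yields the optimal rate $O(h^r)$ asserted after the theorem. This is the same point the paper's proof leaves implicit inside the Strang-lemma citation; your version at least makes it visible, but you should state the weaker (honest) right-hand side rather than claim the literal $L^2$ infimum.
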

\begin{proof}
For any ${\tau}_h\in{\Sigma}_h$, $v_h^*\in U_h^*$, direct calculation shows that
\begin{equation}\label{Btilde0}
    \begin{aligned}
    &\widetilde{B}_h(\sigma-\sigma_h,u-u^*_h;\tau_h,v^*_h)=a(\sigma-\sigma_h,\tau_h)+b_h(\tau_h,u-u^*_h)\\
    &+b_h(\sigma-\sigma_h,v^*_h)+(\mathbb{M}\nabla_h^2(u-u^*_h)-\sigma+\sigma_h,\nabla_h^2(I-I_h)v^*_h).
    \end{aligned}
\end{equation}
Using \eqref{Btilde0}, \eqref{commIh}, \eqref{HHJ}, \eqref{ctsHHJ}, \eqref{L2},  $I_hu_h^*=u_h$, we obtain
\begin{equation}\label{inconsistency}
    \begin{aligned}
    &\widetilde{B}_h(\sigma-\sigma_h,u-u^*_h;{\tau}_h,v^*_h)=a({\sigma}-{\sigma}_h,{\tau}_h)+b_h({\tau}_h,I_hu-I_hu^*_h)\\
    &\qquad+b_h(\sigma-\sigma_h,v^*_h)+(\sigma_h-\mathbb{M}\nabla_h^2u^*_h,\nabla_h^2(I-I_h)v^*_h)\\
    &\quad=a(\sigma-\sigma_h,\tau_h)+b_h(\tau_h,u-u_h)-(f,v_h^*)-b_h(\sigma_h,I_hv^*_h)\\
    &\quad=-(f,v_h^*-I_hv^*_h)=-(f-Q_h^{r-3}f,v_h^*-I_hv^*_h),
    \end{aligned}
\end{equation}
{which is an error term due to inconsistency of $\widetilde{B}_h$.
It then follows from a  Strang's lemma (cf.~\cite{BrennerScott2008}) for nonconforming methods, the inf-sup condition for $\widetilde{B}_h$ in Lemma \ref{infsup2},}  and \eqref{inconsistency} that
\begin{equation}\label{last}
\begin{aligned}
&\|\sigma-\sigma_h\|+\|u-u_h^*\|_{2,h}\\
&\lesssim\inf_{\tau_h\in\Sigma_h, v_h^*\in U_h^*}\big(\|\sigma-\tau_h\|+\|u-v_h^*\|_{2,h}\big)\\
&\quad+\sup_{\substack{\tau_h\in\Sigma_h, v_h^*\in U_h^*\\\|\tau_h\|+\|v_h^*\|_{2,h}=1}}\widetilde{B}_h(\sigma-\sigma_h,u-u^*_h;\tau_h,v^*_h)\\
&\lesssim\inf_{\tau_h\in\Sigma_h, v_h^*\in {U_h^*}}\big(\|\sigma-\tau_h\|+\|u-v_h^*\|_{2,h}\big)\\
&\quad+\sup_{v_h^*\in U_h^*, \|v_h^*\|_{2,h}=1}(f-Q_h^{r-3}f,v_h^*-I_hv^*_h).
\end{aligned}
\end{equation}
Applying Lemma \ref{approxIh} to the last term in \eqref{last} completes
the proof. 
\qed\end{proof}

For boundary value problems with sufficiently smooth solution $(\sigma,u)$, the quasi-optimal error estimate in Theorem \ref{quasioptimal} implies the optimal order rate of convergence
\begin{align*}
    \|\sigma-\sigma_h\|+\|u-u_h^*\|_{2,h}\lesssim h^r.
\end{align*}
In the literature, the HHJ mixed method with some element-wise postprocessed deflection in e.g., \cite{Stenberg1991} also fulfills the same optimal order convergence under $\|\cdot\|_{0}\times\|\cdot\|_{2,h}$ for smooth problems.
However, Theorem \ref{quasioptimal} is stronger than the aforementioned a priori error estimates because it provides best approximation in general situations, regardless of the solution regularity.

\subsection{A posteriori error estimate by $u_h^*$}
Based on the reconstructed deflection $u_h^*$, we give a new a posteriori error bound $\eta_h=\big(\sum_{T\in\mathcal{T}_h}\eta_h(T)^2\big)^\frac{1}{2}$ for controlling $\|\sigma-\sigma_h\|+\|u-u_h^*\|_{2,h}$  with the following error indicator
\begin{equation*}
    \eta_h(T):=\big(\|\mathbb{M}^{-1}\sigma_h-\nabla_h^2u_h^*\|^2_T+h_T^4\|f-\divs\divv\sigma_h\|^2_T+\sum_{\substack{e\subset\partial T,\\ e\in\Gamma^c_h}}h_e^{-1}\|\llbracket\partial_nu_h^*\rrbracket\|^2_e\big)^\frac{1}{2}.
\end{equation*}
\begin{theorem}\label{upperbound}
We have
\begin{equation*}
    \|\sigma-\sigma_h\|+\|u-u_h^*\|_{2,h}\lesssim\eta_h.
\end{equation*}
\end{theorem}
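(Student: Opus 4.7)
The plan is to decompose the error into contributions that either already appear in $\eta_h$ or are controlled by them. First, since $u\in H^2(\Omega)$ satisfies $\partial_n u=0$ on $\Gamma_c$ and has no $\partial_n$-jumps across interior edges, the term $\|h_\mathcal{E}^{-1/2}\llbracket\partial_n(u-u_h^*)\rrbracket\|_{\Gamma^c_h}$ appearing in $\|u-u_h^*\|_{2,h}$ coincides with the jump contribution to $\eta_h$. For the moment error, the triangle inequality with the insertion of $\pm\mathbb{M}\nabla^2_h u_h^*$ gives
\begin{equation*}
\|\sigma-\sigma_h\|\leq\|\mathbb{M}\nabla^2_h(u-u_h^*)\|+\|\mathbb{M}\nabla^2_h u_h^*-\sigma_h\|\lesssim\|\nabla^2_h(u-u_h^*)\|+\|\mathbb{M}^{-1}\sigma_h-\nabla^2_h u_h^*\|,
\end{equation*}
so only $\|\nabla^2_h(u-u_h^*)\|$ still needs to be bounded.

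To control this broken seminorm, I would introduce the $C^1$-conforming companion $\phi\in\widetilde{U}$ of $u_h^*$ supplied by \eqref{approx2h}, which satisfies $\|\nabla^2_h(\phi-u_h^*)\|\lesssim\|h_\mathcal{E}^{-1/2}\llbracket\partial_n u_h^*\rrbracket\|_{\Gamma^c_h}$, reducing matters to bounding $\|\nabla^2\psi\|$ with $\psi:=u-\phi\in\widetilde{U}$. By the coercivity of $\mathbb{M}$ from \eqref{a},
\begin{equation*}
\|\nabla^2\psi\|^2\lesssim(\mathbb{M}\nabla^2\psi,\nabla^2\psi)=(\sigma-\sigma_h,\nabla^2\psi)+(\sigma_h-\mathbb{M}\nabla^2_h u_h^*,\nabla^2\psi)+(\mathbb{M}\nabla^2_h(u_h^*-\phi),\nabla^2\psi),
\end{equation*}
and the last two terms are directly $\lesssim\eta_h\|\nabla^2\psi\|$ by the previous bounds.

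The main work is the first term. Because $\psi\in\widetilde{U}$, the continuous primal weak form gives $(\sigma,\nabla^2\psi)=(f,\psi)$. Elementwise integration by parts yields $(\sigma_h,\nabla^2\psi)=-b_h(\sigma_h,\psi)+\langle(\sigma_h)_{nn},\partial_n\psi\rangle_{\partial\mathcal{T}_h}$, and the boundary contribution vanishes: on interior edges because $(\sigma_h)_{nn}$ is single-valued by $\sigma_h\in\Sigma$ while $\partial_n\psi$ changes sign under $n\mapsto-n$, on $\Gamma_c$ because $\partial_n\psi=0$, and on $\Gamma_s\cup\Gamma_f$ because $(\sigma_h)_{nn}=0$. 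Then \eqref{commIh} replaces $\psi$ by $I_h\psi\in U_h$ in $b_h$ (note $\psi=0$ on $\Gamma_c\cup\Gamma_s$ is preserved by $I_h$), so \eqref{HHJ2} yields $b_h(\sigma_h,I_h\psi)=-(f,I_h\psi)$; combined with the orthogonality $(\divs\divv_h\sigma_h,\psi-I_h\psi)=0$, valid since $\divs\divv_h\sigma_h$ is piecewise in $\mathcal{P}_{r-3}$ and $I_h$ preserves $\mathcal{P}_{r-3}$ moments, one obtains
\begin{equation*}
(\sigma-\sigma_h,\nabla^2\psi)=(f-\divs\divv_h\sigma_h,\psi-I_h\psi).
\end{equation*}
The standard interpolation estimate $\|\psi-I_h\psi\|_T\lesssim h_T^2\|\nabla^2\psi\|_T$ then produces the equilibrium term of $\eta_h$ and closes the argument.

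The hardest point is to avoid the spurious interior-edge jumps of $K(\sigma_h)$ (and of $(\sigma_h)_{nt}$, if one instead used \eqref{bh2}) that a naive integration by parts on $(\sigma_h,\nabla^2\psi)$ would create, since none of these quantities appear in $\eta_h$. The paired use of the commuting interpolation \eqref{commIh} and the discrete equation \eqref{HHJ2} bypasses them cleanly by replacing $\psi$ with $I_h\psi$ in $b_h$ and then eliminating the $b_h$ piece entirely, and this is precisely why no Helmholtz-type decomposition is required.
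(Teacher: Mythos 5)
Your proof is correct, and while it uses the same basic ingredients as the paper (the $C^1$ companion $\phi$ from \eqref{approx2h}, a three-way splitting of the error, Galerkin orthogonality, and the interpolation estimate for $I_h$), it reorganizes them in two genuinely different ways. First, the paper runs the coercivity argument on the moment error, writing $C\|\sigma-\sigma_h\|^2\leq a(\sigma-\sigma_h,\sigma-\sigma_h)$ and splitting off $(\sigma-\sigma_h,\nabla^2 v)$ with $v=u-\phi$, then recovering $\|\nabla_h^2(u-u_h^*)\|$ afterwards by the triangle inequality; you do the mirror image, testing $\mathbb{M}\nabla^2\psi$ against $\nabla^2\psi$ and recovering $\|\sigma-\sigma_h\|$ at the end. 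Both work because the two quantities differ from each other by exactly the first indicator term. Second, and more substantively, your treatment of the consistency term is cleaner than the paper's: the paper expands $b_h(\sigma_h,v-v_h)$ by integration by parts on elements and edges, producing the edge residual $\langle\llbracket K(\sigma_h)\rrbracket,v-v_h\rangle_{\Gamma_h^f}$, which it then absorbs into the volume residual via the dominance estimate \eqref{dominance} quoted from \cite{HHX2011}. Your route via the commuting identity \eqref{commIh} gives $b_h(\sigma_h,\psi-I_h\psi)=0$ outright, and the moment-preserving properties of $I_h$ in \eqref{Ih} let you insert $\divs\divv_h\sigma_h$ for free (indeed, since $\llbracket K(\sigma_h)\rrbracket|_e\in\mathcal{P}_{r-2}(e)$ and $\divs\divv\sigma_h|_T\in\mathcal{P}_{r-3}(T)$, the edge term the paper carries actually pairs to zero against $\psi-I_h\psi$). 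This bypasses \eqref{dominance} entirely and arrives at the identity $(\sigma-\sigma_h,\nabla^2\psi)=(f-\divs\divv_h\sigma_h,\psi-I_h\psi)$ directly, which is a modest but real simplification of the paper's argument; the price is only that you must check $I_h\psi\in U_h$ and the vanishing of $\langle(\sigma_h)_{nn},\partial_n\psi\rangle_{\partial\mathcal{T}_h}$, both of which you do correctly.
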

\begin{proof}
Let $\phi\in\widetilde{U}$ be given in \eqref{approx2h} such that
\begin{equation}\label{phiu}
    \|\nabla_h^2(u_h^*-\phi)\|\leq C^\frac{1}{2}_1\|h_\mathcal{E}^{-\frac{1}{2}}\llbracket\partial_n u_h^*\rrbracket\|_{\Gamma^c_h}.
\end{equation}
Let  $v=u-\phi\in\widetilde{U}$. We then proceed with the following splitting
\begin{equation}\label{sigmasigmah}
\begin{aligned}
&C_2\|\sigma-\sigma_h\|^2\leq a(\sigma-{\sigma}_h,{\sigma}-{\sigma}_h)\\
&=({\sigma}-{\sigma}_h,\nabla_h^2u_h^*-\mathbb{M}^{-1}{\sigma}_h)+({\sigma}-{\sigma}_h,\nabla^2v)+({\sigma}-{\sigma}_h,\nabla_h^2(\phi-u_h^*)).
\end{aligned}
\end{equation}
The mean value inequality with $\varepsilon>0$ and \eqref{phiu} yield
\begin{equation}\label{1st3rdterm}
    \begin{aligned}
    &|({\sigma}-{\sigma}_h,\nabla_h^2u_h^*-\mathbb{M}^{-1}{\sigma}_h)|\leq\frac{\varepsilon}{2}\|{\sigma}-{\sigma}_h\|^2+\frac{\varepsilon^{-1}}{2}\|\nabla_h^2u_h^*-\mathbb{M}^{-1}{\sigma}_h\|^2,\\
    &|({\sigma}-{\sigma}_h,\nabla_h^2(\phi-u_h^*))|\leq\frac{\varepsilon}{2}\|{\sigma}-{\sigma}_h\|^2+\frac{\varepsilon^{-1}}{2}C_1\|h_\mathcal{E}^{-\frac{1}{2}}\llbracket\partial_n u_h^*\rrbracket\|^2_{\Gamma^c_h}.
    \end{aligned}
\end{equation}
Integrating by parts leads to 
\begin{equation}\label{2ndterm}
    \begin{aligned}
&({\sigma}-{\sigma}_h,\nabla^2v)=-(\divv({\sigma}-{\sigma}_h),\nabla v)+\langle({\sigma}-{\sigma}_h)n,\nabla v\rangle_{\partial\mathcal{T}_h}\\
    &=-(\divv(\sigma-{\sigma}_h),\nabla v)+\langle({\sigma}-\sigma_h)_{nt},\partial_tv\rangle_{\partial\mathcal{T}_h}=-b_h({\sigma}-\sigma_h,v),
    \end{aligned}
\end{equation}
where $\langle({\sigma}-{\sigma}_h)_{nn},\partial_n v\rangle_{\partial\mathcal{T}_h}=0$ is used in the last equality. The interpolant  $v_h=I_hv\in U_h$ satisfies
\begin{equation}\label{vvh}
    \|h_\mathcal{T}^{-2}(v-v_h)\|+\|h^{-\frac{3}{2}}_\mathcal{E}(v-v_h)\|\lesssim|v|_2.
\end{equation}
It follows from \eqref{error2}, \eqref{ctsHHJ2}, \eqref{HHJ2} and integration by parts on each edge that
\begin{equation}\label{2ndtermfurther}
    \begin{aligned}
    &-b_h({\sigma}-{\sigma}_h,v)=-b_h({\sigma}-{\sigma}_h,v-v_h)=(f,v-v_h)+b_h({\sigma}_h,v-v_h)\\
    &\quad=(f-\divs\divv{\sigma}_h,v-v_h)+\langle(\divv{\sigma}_h){n},v-v_h\rangle_{\partial\mathcal{T}_h}\\
    &\qquad-\langle({\sigma}_h)_{nt},\partial_t(v-v_h)\rangle_{\partial\mathcal{T}_h}\\
    &\quad=(f-\divs\divv{\sigma}_h,v-v_h)+\langle \llbracket K({\sigma}_h)\rrbracket,v-v_h\rangle_{\Gamma^f_h},
    \end{aligned}
\end{equation}
In the last equality, we use $v-v_h=0$ at each vertices of $\partial T$ and on $\partial\Omega\backslash\Gamma_f$.
It is proved in Lemma 3.3 of \cite{HHX2011} that
\begin{equation}\label{dominance}
    \|h_\mathcal{E}^\frac{3}{2}\llbracket K(\sigma_h)\rrbracket\|_{\Gamma_h^f}\lesssim\|h_\mathcal{T}^{2}(f-\divs\divv\sigma_h)\|.
\end{equation}
Therefore combining \eqref{2ndterm}--\eqref{dominance} and using the Cauchy--Schwarz and triangle inequalities, we have
\begin{equation}\label{2ndtermfurthermore}
\begin{aligned}
|({\sigma}-{\sigma}_h,\nabla^2v)|&\leq C^\frac{1}{2}_3\|h_\mathcal{T}^{2}(f-\divs\divv{\sigma}_h)\|\|\nabla^2v\|\\
&\leq \varepsilon^{-1}C_3\|h_\mathcal{T}^{2}(f-\divs\divv{\sigma}_h)\|^2\\
&+\frac{\varepsilon}{2}\|\nabla_h^2(u-u_h^*)\|^2+\frac{\varepsilon}{2}\|\nabla_h^2(u_h^*-\phi)\|^2.
\end{aligned}
\end{equation}
It follows from \eqref{sigmasigmah}, \eqref{1st3rdterm}, \eqref{2ndtermfurthermore}, \eqref{phiu} and the triangle inequality that
\begin{equation}\label{sigmapart}
    \begin{aligned}
&C_2\|{\sigma}-{\sigma}_h\|^2\leq\varepsilon\|{\sigma}-{\sigma}_h\|^2+\frac{\varepsilon^{-1}}{2}\|\nabla_h^2u_h^*-\mathbb{M}^{-1}{\sigma}_h\|^2\\
&\quad+\varepsilon^{-1}C_3\|h_\mathcal{T}^{2}(f-\divs\divv{\sigma}_h)\|^2+\frac{\varepsilon}{2}\|\nabla_h^2(u-u_h^*)\|^2\\
&\quad+\left(\frac{\varepsilon^{-1}}{2}+\frac{\varepsilon}{2}\right)C_1\|h_\mathcal{E}^{-\frac{1}{2}}\llbracket\partial_n u_h^*\rrbracket\|^2_{\Gamma^c_h}.
\end{aligned}
\end{equation}
Meanwhile we have
\begin{subequations}\label{upart}
\begin{align}
&\|h_e^{-\frac{1}{2}}\llbracket\partial_n (u-u_h^*)\rrbracket\|_{\Gamma^c_h}=\|h_e^{-\frac{1}{2}}\llbracket\partial_n u_h^*\rrbracket\|_{\Gamma^c_h},\label{normaljump}\\
&\|\nabla_h^2(u-u_h^*)\|^2\leq2\|\mathbb{M}^{-1}({\sigma}-{\sigma}_h)\|^2+2\|\mathbb{M}^{-1}{\sigma}_h-\nabla_h^2u_h^*\|^2\\
&\hspace{2.3cm}\leq2C_4\|{\sigma}-{\sigma}_h\|^2+2\|\mathbb{M}^{-1}{\sigma}_h-\nabla_h^2u_h^*\|^2.\nonumber
\end{align}
\end{subequations}
Combining \eqref{sigmapart} with {$\varepsilon=\min(\frac{C_2}{4},\frac{C_2}{4C_4})$} and \eqref{upart} completes the proof.
\qed\end{proof}
\begin{remark}
Based on the Helmholtz decomposition in \cite{BeiraoNiiranenStenberg2007} for symmetric tensors, the work \cite{HHX2011} derived residual-type a posteriori error estimates for $\|\sigma-\sigma_h\|$ on polygonal domains.  Without using the Helmholtz decomposition, our analysis in Theorem \ref{upperbound} works on more general domain $\Omega$, e.g., domains with holes. 
\end{remark}

{Without absorbing the edge term involving $K(\sigma_h)$ in \eqref{dominance}, the error estimator $\bar{\eta}_h=\big(\sum_{T\in\mathcal{T}_h}\bar{\eta}_h(T)^2\big)^\frac{1}{2}$ with
\begin{align*}
    \bar{\eta}_h(T)&:=\big(\|\mathbb{M}^{-1}\sigma_h-\nabla_h^2u_h^*\|^2_T+h_T^4\|f-\divs\divv\sigma_h\|^2_T\\
    &\quad+\sum_{\substack{e\subset\partial T,\\ e\in\Gamma^c_h}}h_e^{-1}\|\llbracket\partial_nu_h^*\rrbracket\|^2_e+\sum_{\substack{e\subset\partial T,\\ e\in\Gamma^f_h}}h_e^3\|\llbracket K(\sigma_h)\rrbracket\|^2_e\big)^\frac{1}{2}
\end{align*}
provides an alternative upper bound for $\|\sigma-\sigma_h\|+\|u-u_h^*\|_{2,h}$ up to a possibly smaller multiplicative constant. For the lowest order HHJ method, $K(\sigma_h)=0$ and thus $\eta_h$, $\bar{\eta}_h$ coincide.}
The efficiency analysis of $\eta_h(T)$ is  straightforward.
\begin{theorem}
For each $T\in\mathcal{T}_h$ we have
\begin{equation*}
\eta_h(T)\lesssim\|\sigma-\sigma_h\|_T+\|u-u_h^*\|_{2,h,T}+h_T^2\|f-Q^r_hf\|_T.
\end{equation*}
\end{theorem}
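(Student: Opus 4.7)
The plan is to control each of the three contributions to $\eta_h(T)^2$ separately.

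The first term $\|\mathbb{M}^{-1}\sigma_h-\nabla_h^2u_h^*\|_T$ is immediate: since $\mathbb{M}^{-1}\sigma=\nabla^2u$ by \eqref{mix:c}, the triangle inequality combined with the boundedness of $\mathbb{M}^{-1}$ gives
$$\|\mathbb{M}^{-1}\sigma_h-\nabla_h^2u_h^*\|_T\lesssim\|\sigma-\sigma_h\|_T+\|\nabla^2(u-u_h^*)\|_T\leq\|\sigma-\sigma_h\|_T+\|u-u_h^*\|_{2,h,T}.$$
For the edge-jump contribution, on every interior edge the regularity $u\in H^2(\Omega)$ implies $\llbracket\partial_n u\rrbracket=0$, while on $e\in\mathcal{E}_h^c$ the clamped condition \eqref{clamped} gives $\partial_n u=0$. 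Hence $\llbracket\partial_n u_h^*\rrbracket=\llbracket\partial_n(u_h^*-u)\rrbracket$ on every $e\in\Gamma_h^c$, which identifies this portion of $\eta_h(T)^2$ with a piece of $\|u-u_h^*\|_{2,h,T}^2$.

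The remaining residual term $h_T^2\|f-\divs\divv\sigma_h\|_T$ is the heart of the matter and I would treat it by a standard bubble-function argument adapted to the fourth-order operator $\divs\divv$. Let $b_T=(\lambda_1\lambda_2\lambda_3)^2$, the squared cubic bubble on $T$, which is positive in the interior of $T$ and whose value and gradient both vanish on $\partial T$. Set
$$p:=Q_h^r f-\divs\divv\sigma_h\in\mathcal{P}_r(T),$$
using $\divs\divv\sigma_h\in\mathcal{P}_{r-3}(T)$. Norm equivalence on the finite-dimensional space $\mathcal{P}_r(T)$ gives $\|p\|_T^2\lesssim(p,pb_T)_T$, and splitting yields
$$(p,pb_T)_T=(Q_h^r f-f,pb_T)_T+(f-\divs\divv\sigma_h,pb_T)_T.$$
Because $\divs\divv\sigma=f$ in $\Omega$ and both $pb_T$ and $\nabla(pb_T)$ vanish on $\partial T$, integrating by parts twice converts the second integral into $(\sigma-\sigma_h,\nabla^2(pb_T))_T$ with no boundary leftover. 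Cauchy--Schwarz with the stability bound $\|pb_T\|_T\lesssim\|p\|_T$ and the inverse estimate $\|\nabla^2(pb_T)\|_T\lesssim h_T^{-2}\|p\|_T$ then give
$$\|p\|_T\lesssim\|f-Q_h^r f\|_T+h_T^{-2}\|\sigma-\sigma_h\|_T,$$
and a triangle inequality produces $h_T^2\|f-\divs\divv\sigma_h\|_T\lesssim\|\sigma-\sigma_h\|_T+h_T^2\|f-Q_h^r f\|_T$. Combining the three estimates yields the claim.

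The main obstacle I foresee is the correct choice of bubble: a plain cubic $\lambda_1\lambda_2\lambda_3$ would leave a boundary contribution proportional to $(\sigma-\sigma_h)_{nn}\,\partial_n(pb_T)$ after integrating the fourth-order operator by parts, and such a term is not locally controllable by $\|\sigma-\sigma_h\|_T$ without invoking additional trace-like machinery. Taking the squared bubble $(\lambda_1\lambda_2\lambda_3)^2$ makes $\nabla(pb_T)$ vanish on $\partial T$ and kills all boundary terms cleanly, at the harmless cost of slightly raising the polynomial degree of $pb_T$, which only affects the shape-regularity-dependent constants hidden in $\lesssim$.
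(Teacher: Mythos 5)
Your proposal is correct and follows essentially the same route as the paper: a triangle inequality (with $\mathbb{M}^{-1}\sigma=\nabla^2u$) for the first term, the observation that $\llbracket\partial_nu\rrbracket=0$ on $\Gamma_h^c$ so the jump of $u_h^*$ equals the jump of $u-u_h^*$, and the standard bubble-function technique for the residual term, which the paper simply cites to Verf\"urth while you carry it out explicitly. Your remark that the bubble must be chosen so that both it and its gradient vanish on $\partial T$ (hence the squared cubic bubble) is exactly the right adaptation of that technique to the fourth-order operator.
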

\begin{proof}
The standard bubble function technique (cf.~\cite{Verfurth2013}) yields
\begin{equation*}
    h^2_T\|f-\divs\divv\sigma_h\|_T\lesssim\|\sigma-\sigma_h\|_T+h_T^2\|f-Q^r_hf\|_T.
\end{equation*}
Meanwhile, the triangle inequality implies
\begin{equation*}
\|\mathbb{M}^{-1}\sigma_h-\nabla_h^2u_h^*\|_T\leq C\|\sigma-\sigma_h\|_T+\|\nabla^2u-\nabla_h^2u_h^*\|_T.
\end{equation*}
Collecting the above two inequalities with \eqref{normaljump} completes the proof.
\qed\end{proof}

The mixed formulation \eqref{ctsHHJ} could also be discretized by the Herrmann--Miyoshi method (cf.~\cite{BabuskaOsbornPitkaranta1980,Herrmann1967,Miyoshi1973}): Find $(\hat{\sigma}_h,\hat{u}_h)\in[S^r_h]_s^4\times U^r_h$ such that
\begin{subequations}\label{HM}
\begin{align}
a(\hat{\sigma}_h,\tau_h)+b_h(\tau_h,\hat{u}_h)&=0,\quad\forall\tau_h\in[S^r_h]_s^4,\\
b_{h}(\hat{\sigma}_h,v_h)&=-(f,v_h),\quad\forall v_h\in U^r_h.
\end{align}
\end{subequations}
The difference between \eqref{HM} and the HHJ method \eqref{HHJ} is the use of the equal-order nodal moment space  $[S^r_h]^4_s$ in \eqref{HM}. 
When $r\geq2,$ the a priori convergence of $\|\sigma-\hat{\sigma}_h\|+\|u-\hat{u}_h\|_{2,h}$ is shown in \cite{BabuskaOsbornPitkaranta1980}.

The a posteriori analysis in Theorem \ref{upperbound} relies solely on the $H^1$ conformity of the numerical deflection $u_h^*$. Since $\hat{u}_h$ is also $C^0$ continuous, the a posteriori error analysis in this section could be directly applied to  \eqref{HM}. Let
\begin{equation*}
    \hat{\eta}_h(T)=\big(\|\mathbb{M}^{-1}\hat{\sigma}_h-\nabla_h^2\hat{u}_h\|^2_T+h_T^4\|f-\divs\divv\hat{\sigma}_h\|^2_T+\sum_{\substack{e\subset\partial T,\\ e\in\Gamma^c_h}}h_e^{-1}\|\llbracket\partial_n\hat{u}_h\rrbracket\|^2_e\big)^\frac{1}{2}.
\end{equation*}
The a posteriori error estimate for the Herrmann--Miyoshi method \eqref{HM} reads
\begin{equation*}
    \big(\sum_{T\in\mathcal{T}_h}\hat{\eta}_h(T)^2\big)^\frac{1}{2}-\|h_\mathcal{T}^2(f-Q^r_hf)\|\lesssim\|\sigma-\hat{\sigma}_h\|+\|u-\hat{u}_h\|_{2,h}\lesssim\big(\sum_{T\in\mathcal{T}_h}\hat{\eta}_h(T)^2\big)^\frac{1}{2}.
\end{equation*}
\section{A posteriori error estimate for $\|\cdot\|_0\times\|\cdot\|_1$}\label{sec01}
In this section, we present superconvergence results and the induced recovery-based error estimator with respect to the norm $\|\cdot\|\times\|\cdot\|_1$. The theoretical foundation of superconvergent recovery error estimators hinges on the analytical solution regularity and the mesh structure, which is unrealistic in adaptive methods. However, in practice, such recovery-type error estimators are often reliable, efficient, and even asymptotically exact for singular problems under graded meshes.

In particular, for the lowest order HHJ method ($r=1$), we present and theoretically validate a new postprocessing procedure $R_h$ based on the superconvergent patch recovery. Under common assumptions, the postprocessed moment $\sigma^*_h=R_h\sigma_h$ is shown to be superconvergent to $\sigma_h$, i.e.,  $\|\sigma-\sigma^*_h\|=O(h^{1+\rho})$ is a higher order term with some $\rho>0$. It then follows from the triangle inequality
\begin{equation*}
    \|\sigma_h-\sigma^*_h\|-\|\sigma-\sigma^*_h\|\leq\|\sigma-\sigma_h\|\leq\|\sigma_h-\sigma^*_h\|+\|\sigma-\sigma^*_h\|
\end{equation*}
that $\|\sigma_h-\sigma^*_h\|$ is an asymptotically exact error estimator for the moment error, i.e., the effectiveness index $\frac{\|\sigma-\sigma_h\|}{\|\sigma_h-\sigma^*_h\|}$ goes to 1 as $h\rightarrow0.$ For the same reason,  combining it with $u_h^*$ leads to the asymptotically exact a posteriori bound $\zeta_h=\big(\|\sigma_h-\sigma^*_h\|^2+|u_h-u_h^*|^2_1\big)^\frac{1}{2}$ for the total error under the norm $\|\cdot\|\times|\cdot|_1$, i.e., $\big(\|\sigma-\sigma_h\|^2+|u-u_h|^2_1\big)^\frac{1}{2}/\zeta_h$ approaches 1 as $h\rightarrow0.$

\subsection{Superconvergence of the deflection} 
It is not hard to see that {the inf-sup condition \eqref{infsup0}} yields an improved error estimate. In fact, It follows from \eqref{commIh} and \eqref{error1} that
\begin{equation}\label{Ihuuh}
    b_h({\tau}_h,I_hu-u_h)=b_h({\tau}_h,u-u_h)=-a({\sigma}-{\sigma}_h,{\tau}_h).
\end{equation}
Then using \eqref{infsup0}, \eqref{Ihuuh} and the boundedness of $a$, we obtain
\begin{equation}\label{superu1}
    \|I_hu-u_h\|_{2,h}\lesssim\sup_{{\tau}_h\in {\Sigma}_h, \|\tau_h\|=1}a({\sigma}-{\sigma}_h,{\tau}_h)\lesssim\|{\sigma}-{\sigma}_h\|.
\end{equation}
A combination of Lemma \ref{superu1} and \eqref{apriori2} yields 
\begin{equation}\label{supercloseu}
    \|I_hu-u_h\|_{2,h}\lesssim h^r|\sigma|_r,
\end{equation}
a supercloseness result with respect to the discrete $H^2$-norm which  cannot be derived from   $|u-u_h|_1=O(h^r)$ in \eqref{apriori1}. Let $\delta_{ij}$ denote the Kronecker delta. 
For the norm $|\cdot|_1$, Theorem 4.2 of \cite{Comodi1989} gives a similar supercloseness result  \begin{equation}\label{supercloseu2}
    |I_hu-u_h|_1\lesssim h^{r+1}\big(\|u\|_{r+2}+\delta_{r1}\|f\|\big),
\end{equation}
under the assumption that $\Omega$ is convex. The next theorem confirms the superconvergence property of $u_h^*$ constructed in Section \ref{sec02h} with respect to the norm $|\cdot|_1.$

\begin{theorem}\label{superu}
Let $\Omega$ be a convex domain.
It holds that
\begin{equation*}
    |u-u_h^*|_1\lesssim h^{r+1}\big(\|u\|_{r+2}+\delta_{r1}\|f\|\big).
\end{equation*}
\end{theorem}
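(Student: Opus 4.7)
My plan is to derive the superconvergence bound by splitting the error via the nodal interpolation and controlling the bubble correction separately. Because $u_h^* = u_h + w_h$ with $w_h\in W_h = (I-I_h^r)U_h^{r+1}$ so that $I_h u_h^* = u_h$, I would start from the triangle inequality
\begin{equation*}
|u-u_h^*|_1 \leq |u-I_hu|_1 + |I_hu - u_h|_1 + |w_h|_1,
\end{equation*}
handling the first term by standard Lagrange interpolation estimates giving $O(h^{r+1}|u|_{r+2})$, and quoting the Comodi supercloseness bound \eqref{supercloseu2} for the second term to obtain $O(h^{r+1}(\|u\|_{r+2} + \delta_{r1}\|f\|))$ (this is where convexity of $\Omega$ enters, via $H^3$/$H^4$ elliptic regularity hidden in that reference).

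The nontrivial step is the estimate for $|w_h|_1$. Since every $v\in W_h$ satisfies $I_h^r v = 0$ and hence vanishes at all vertices of $\mathcal{T}_h$, the same local scaling argument as in \eqref{localIIhwhstar} yields $\|\nabla v\|_T \lesssim h_T\|\nabla^2 v\|_T$, and summing gives $|w_h|_1 \lesssim h\,\|\nabla_h^2 w_h\|$. Testing the definition \eqref{wh} of $w_h$ against itself together with the coercivity \eqref{a} of $\mathbb{M}$ produces
\begin{equation*}
\|\nabla_h^2 w_h\| \lesssim \|\sigma_h - \mathbb{M}\nabla_h^2 u_h\|.
\end{equation*}

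The right-hand side is then controlled by inserting $\sigma = \mathbb{M}\nabla^2 u$ and $\nabla_h^2 I_h u$ as intermediate quantities: by a triangle inequality
\begin{equation*}
\|\sigma_h - \mathbb{M}\nabla_h^2 u_h\| \leq \|\sigma-\sigma_h\| + \|\mathbb{M}\|\bigl(\|\nabla^2 u - \nabla_h^2 I_h u\| + \|I_h u - u_h\|_{2,h}\bigr),
\end{equation*}
and each piece is $O(h^r\|u\|_{r+2})$: the first by \eqref{apriori2} combined with $|\sigma|_r\lesssim\|u\|_{r+2}$, the second by standard interpolation, and the third by the discrete-$H^2$ supercloseness \eqref{supercloseu}. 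Therefore $|w_h|_1 \lesssim h^{r+1}\|u\|_{r+2}$, and combining the three contributions gives the asserted bound.

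The main obstacle I anticipate is a clean justification of $|w_h|_1\lesssim h\,\|\nabla_h^2 w_h\|$ (i.e.\ checking that the bubble property $I_h^r w_h = 0$ is strong enough for the scaling to go through on arbitrary shape-regular triangles without picking up an extra $h_\mathcal{E}^{-1/2}\llbracket\partial_n w_h\rrbracket$ jump term); everything else is essentially bookkeeping against already-established estimates.
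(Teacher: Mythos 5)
There is a genuine and fatal gap at the very first step: the decomposition
\begin{equation*}
|u-u_h^*|_1 \leq |u-I_hu|_1 + |I_hu-u_h|_1 + |w_h|_1
\end{equation*}
routes the error through the \emph{degree-$r$} interpolant $I_hu=I_h^ru\in S_h^r$, and the term $|u-I_h^ru|_1$ is only $O(h^r|u|_{r+1})$, not $O(h^{r+1}|u|_{r+2})$ as you claim. Piecewise polynomials of degree $r$ cannot approximate a generic $u$ to order $h^{r+1}$ in the $H^1$ seminorm; for instance, with $r=1$ and $u$ a nonlinear quadratic, $|u-I_h^1u|_1\simeq h|u|_2\neq 0$ while your claimed bound $h^2|u|_3$ vanishes. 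This is precisely the classical pitfall in superconvergence analysis: supercloseness of $u_h$ to the interpolant (which you correctly quote from \eqref{supercloseu2}) does not transfer to superconvergence of $u_h$ or of any quantity compared against $I_h^ru$. Your argument therefore only yields $|u-u_h^*|_1\lesssim h^r$, i.e.\ no improvement over \eqref{apriori1}.

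The paper avoids this by taking the \emph{degree-$(r+1)$} interpolant $\tilde u_h=I_h^{r+1}u\in U_h^{r+1}$ as the pivot, writing $u-u_h^*=(u-\tilde u_h)+v+I_h(\tilde u_h-u_h^*)$ with $v=(I-I_h)(\tilde u_h-u_h^*)\in W_h$. The first term is now genuinely $O(h^{r+1}|u|_{r+2})$, the last equals $I_hu-u_h$ and is handled by \eqref{supercloseu2}, and the bubble $v$ is estimated via the orthogonality \eqref{L2} and the scaling $|v|_1\lesssim h\|\nabla_h^2v\|$. Note that $v=(I-I_h)\tilde u_h-w_h$: your decomposition discards exactly the high-frequency piece $(I-I_h)\tilde u_h$ of $u$ that the postprocessing $u_h^*$ is designed to capture, which is where the lost order of accuracy hides. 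The remaining ingredients of your argument are sound and close to the paper's: the bound $\|\nabla_h^2w_h\|\lesssim\|\sigma_h-\mathbb{M}\nabla_h^2u_h\|=O(h^r)$ is correct, and your worry about the scaling $|w_h|_1\lesssim h\|\nabla_h^2w_h\|$ is a non-issue, since elements of $W_h$ vanish at all vertices of each triangle so the element-wise Poincar\'e/scaling argument of \eqref{localIIhwhstar} applies without any edge-jump terms. But these pieces cannot rescue the overall bound as long as $I_h^ru$ is the comparison function.
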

\begin{proof}
Let $\tilde{u}_h=I_h^{r+1}u\in U_h^{r+1}$ and $v=(I-I_h)(\tilde{u}_h-u_h^*)\in W_h.$ It follows from \eqref{L2} and the Cauchy--Schwarz inequality that
\begin{equation}\label{Hessv}
\begin{aligned}
    &\|\nabla_h^2v\|^2\simeq(\mathbb{M}\nabla_h^2v,\nabla_h^2v)\\
    &=(\mathbb{M}\nabla_h^2(\tilde{u}_h-u),\nabla_h^2v)+(\mathbb{M}\nabla^2u-\sigma_h,\nabla_h^2v)\\
    &\quad-(\mathbb{M}\nabla_h^2I_h(\tilde{u}_h-u_h^*),\nabla_h^2v)\\
    &\lesssim\big(\|\nabla_h^2(\tilde{u}_h-u)\|+\|\sigma-\sigma_h\|+\|\nabla_h^2I_h(\tilde{u}_h-u_h^*)\|\big)\|\nabla_h^2v\|.
    \end{aligned}
\end{equation}
A combination of $I_h\tilde{u}_h=I_hu$, \eqref{Hessv}, and a scaling argument then yields
\begin{equation}\label{v1}
    |v|_1\lesssim h\|\nabla_h^2v\|\lesssim h\big(\|\nabla^2_h(u-\tilde{u}_h)\|+\|\sigma-\sigma_h\|+ \|\nabla^2_hI_h(u_h-u_h^*)\|\big).
\end{equation}
We conclude the proof with the triangle inequality
\begin{equation*}
|u-u^*_h|_1\leq|u-\tilde{u}_h|_1+|v|_1+|I_h(\tilde{u}_h-u^*_h)|_1,
\end{equation*}
\eqref{v1}, \eqref{apriori2}, \eqref{supercloseu}, \eqref{supercloseu2}, $I_h\tilde{u}_h=I_hu$, and
the classical interpolation error estimate. 
\qed\end{proof}

\subsection{Superconvergence of the moment}
We then derive superconvergence results for the moment variable. Unlike the variable $u$, superconvergence for $\sigma$ depends on the mesh structure and the polynomial degree $r$. In the following, we focus on the lowest order HHJ method ($r=1$).

We reconstruct a new moment field $\sigma_h^*=R_h\sigma_h$ via the postprocessing operator $R_{h}$ based on local least-squares fitting. The operator $R_{h}$ is a linear mapping from ${\Sigma}^1_h$ to the nodal space $[S_h^1]_s^4$. In $\mathcal{T}_h,$ let $\mathcal{V}_h$, $\mathcal{V}^o_h$, $\mathcal{V}^\partial_h$ denote the sets of vertices, interior vertices,  boundary vertices,  respectively. For a closed subdomain $\Omega_0$, let $\mathcal{T}_h(\Omega_0)$, $\mathcal{E}_h(\Omega_0)$ denote the set of elements and edges in $\Omega_0$, respectively. The postprocessed solution $R_h{\sigma}_h$ is determined by nodal values of $R_h{\sigma}_h$ at each vertex in $\mathcal{V}_h$. We say two vertices $z_1, z_2\in\mathcal{V}_h$ are directly connected if they are two endpoints of an edge $e\in\mathcal{E}_h.$ Following \cite{NagaZhang2004},  each vertex $z\in\mathcal{V}_h$ is assigned with a closed and connected Lipschitz vertex patch $\Omega_z\ni z$ as follows. 
\begin{enumerate}
\item[]{Case 1.} For $z\in\mathcal{V}_h^o$, $\Omega_z$ is the union of triangles sharing $z$ as a vertex and possibly a few extra triangles surrounding $z$ but not containing $z$ in $\mathcal{T}_h$. 
\item[]Case 2. For $z\in\mathcal{V}_h^\partial$ directly connected to $z^{\prime}\in\mathcal{V}_h^{o}$, let $\Omega_{z}:=\Omega_{z^{\prime}}$. 
\item[]Case 3. For $z\in\mathcal{V}_h^\partial$ that is not directly connected to any interior vertices, assume that $z$ and $z^\prime\in\mathcal{V}_h^o$ are indirectly connected through a path of edges. Let $\Omega_z$ be the smallest patch containing $z\cup\Omega_{z^\prime}$.
\end{enumerate}

\begin{figure}[ht]
\begin{subfigure}{.3\textwidth}
  \centering
  \includegraphics[width=1\linewidth]{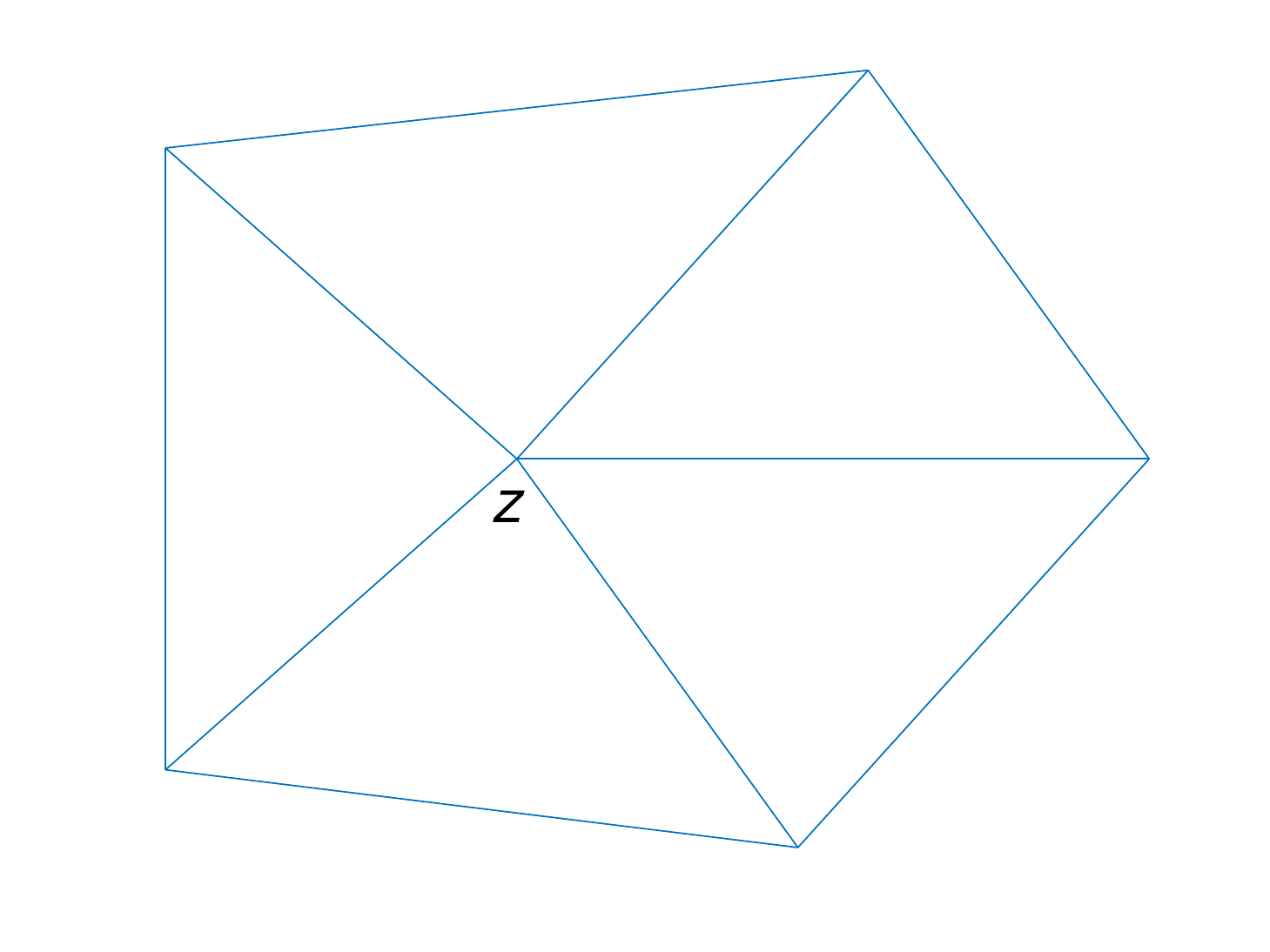}  
  \caption{$\Omega_z$ in Case 1}
  \label{vertexpatch1}
\end{subfigure}
\begin{subfigure}{.3\textwidth}
  \centering
  \includegraphics[width=1\linewidth]{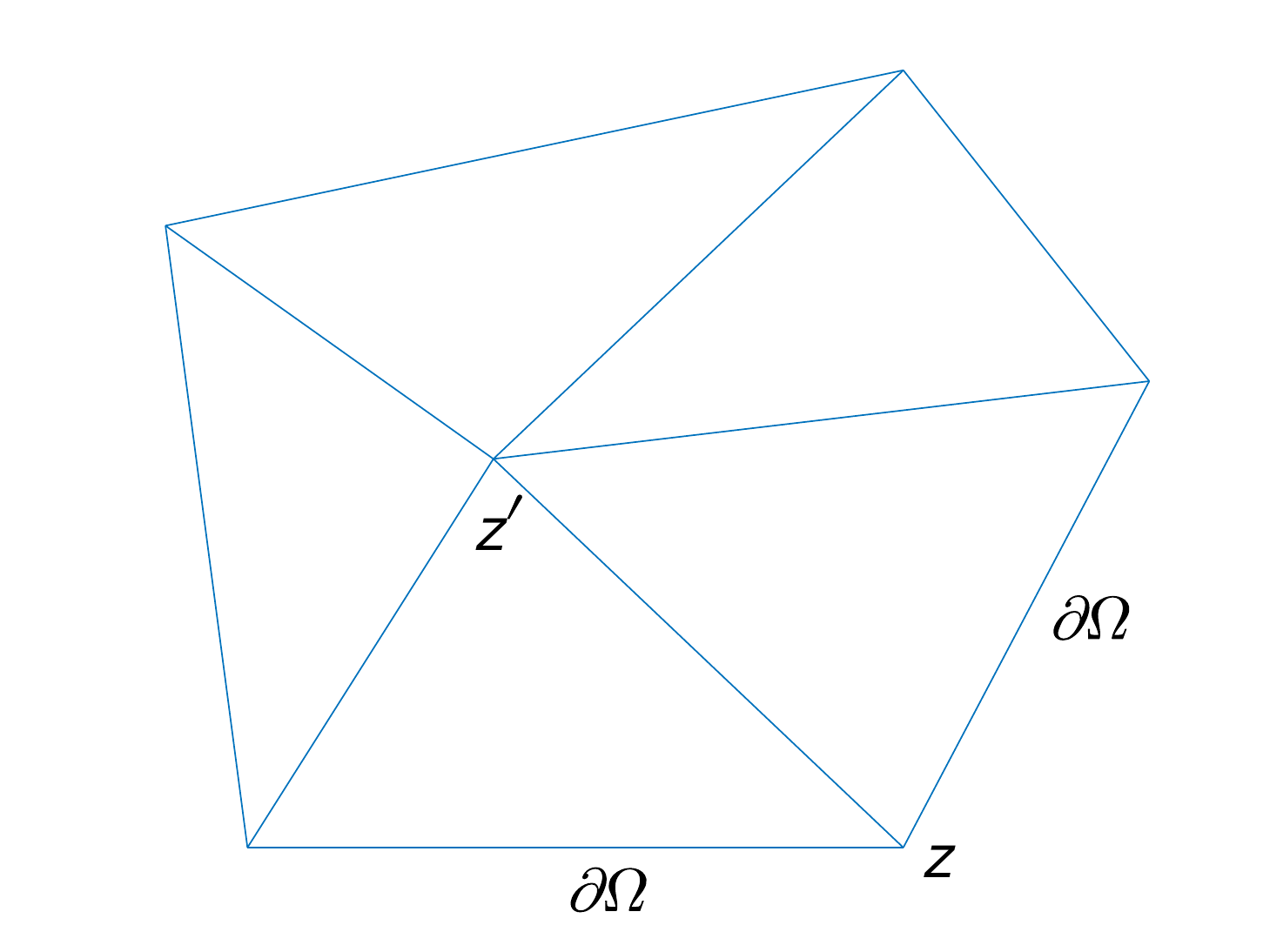}  
  \caption{$\Omega_z$ in Case 2}
  \label{vertexpatch2}
\end{subfigure}
\begin{subfigure}{.3\textwidth}
  \centering
  \includegraphics[width=1\linewidth]{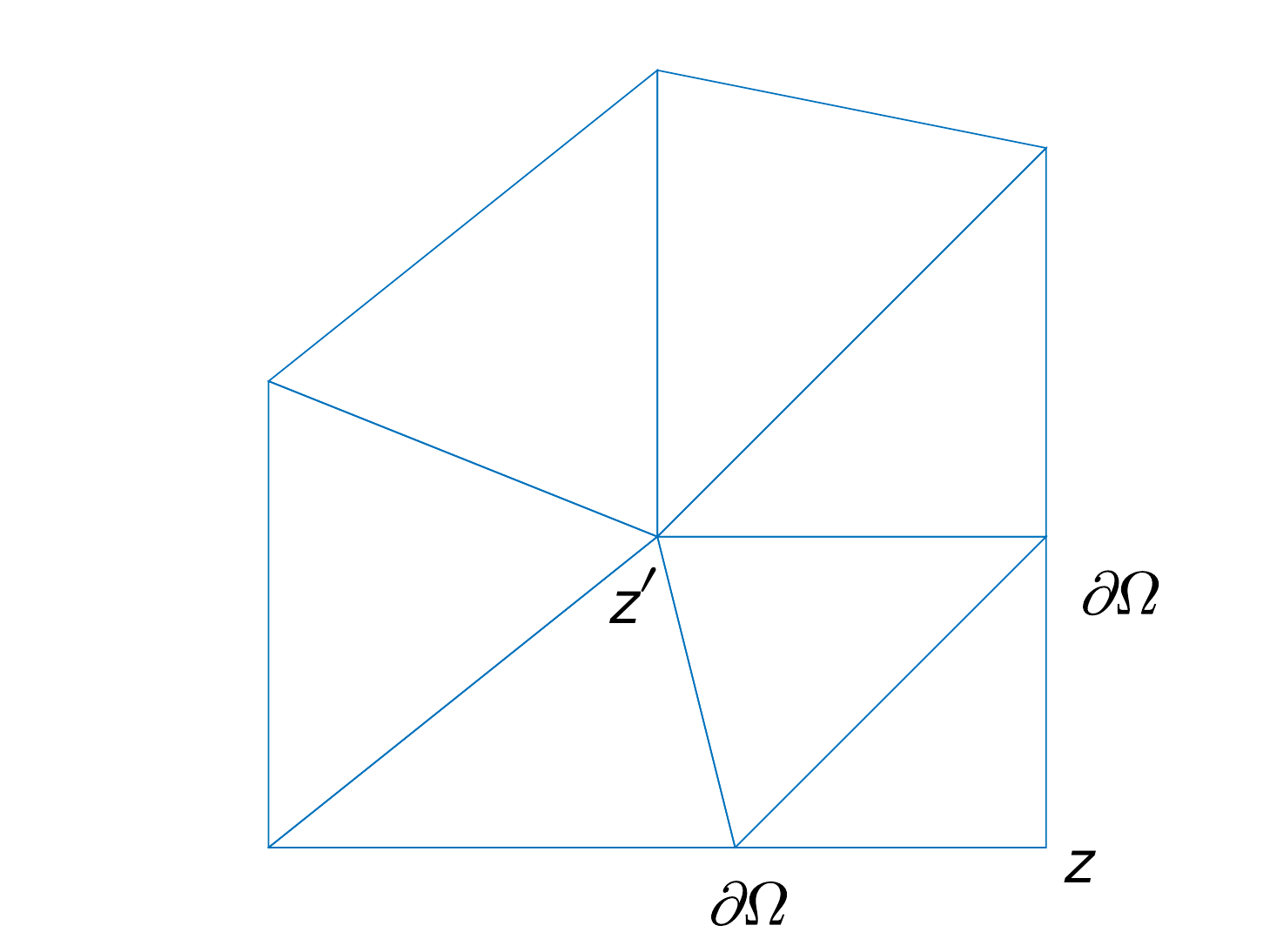}  
  \caption{$\Omega_z$ in Case 3}
  \label{vertexpatch3}
\end{subfigure}
\caption{Vertex patch $\Omega_z$.}
\label{vertexpatch}
\end{figure}
In practical meshes, most vertices belong to Cases 1 and 2, while a few corner points are indirectly connected to interior vertices through a path of two edges. {Examples of the vertex patch $\Omega_z$ at an interior or boundary vertex $z$ are shown in Fig.~\ref{vertexpatch}.} The postprocessing operator $R_h$ is defined as follows.
\begin{definition}\label{Rh} Given ${\tau}_h\in{\Sigma}^1_h,$
the image $R_{h}{\tau}_h\in [S^1_h]_{s}^{4}$ is defined as follows. For each $z\in\mathcal{V}_h$,
let $\tau_z\in[\mathcal{P}_{1}(\Omega_z)]_{s}^{4}$ be the minimizer such that
\begin{equation*}
\tau_z=\arg\min_{\tau\in[\mathcal{P}_{1}(\Omega_z)]_{s}^4}\sum_{e\in\mathcal{E}_h(\Omega_z)}\big([\tau(m_e)]_{nn}-[\tau_h(m_e)]_{nn}\big)^2
\end{equation*} 
where $m_e$ is the midpoint of $e$. Then $R_h\tau_h$ at $z\in\mathcal{V}_h$ is defined as $R_h\tau_h(z):=\tau_{z}(z)$.
\end{definition}

To clarify this postprocessing procedure, we rewrite $R_h$ in linear algebra language. For a vertex $z$, let $\{e_j\}_{j=1}^N$ be the set of edges in the local patch $\Omega_z$. The minimizer $\tau_z=\tau_z(x_1,x_2)$ is of the form
\begin{equation*}
\tau_z=\begin{pmatrix}c_{1}+c_{2}x_{1}+c_{3}x_{2},&c_{4}+c_{5}x_{1}+c_{6}x_{2}\\
c_{4}+c_{5}x_{1}+c_{6}x_{2},&c_{7}+c_{8}x_{1}+c_{9}x_{2}\end{pmatrix},
\end{equation*} 
with parameters $\{c_i\}_{i=1}^9$ to be determined.
Let $m_{j}=(m_{j1},m_{j2})^{\intercal}$ be the midpoint of $e_j$ and $n_{j}=(n_{j1},n_{j2})^{\intercal}$ the unit normal to $e_{j}$. We define 
\begin{equation*}
d_z=(\tau_h(m_1)_{nn}|_{e_1},\ldots,\tau_h(m_N)_{nn}|_{e_N})^{\intercal}
\end{equation*}
and $A_z=(a_1^{\intercal},\ldots,a_{N}^\intercal)^{\intercal}\in\mathbb{R}^{N\times9}$ with 
\begin{align*}
a_{j}=(&n_{j1}^{2}, n_{j1}^{2}m_{j1}, n_{j1}^{2}m_{j2}, 2n_{j1}n_{j2},2n_{j1}n_{j2}m_{j1},\\
&2n_{j1}n_{j2}m_{j2},n_{j2}^{2}, n_{j2}^{2}m_{j1}, n_{j2}^{2}m_{j2}).
\end{align*}
Then
$c_z=(c_{1},\ldots,c_{9})^{\intercal}$ solves $\min_{{\hat{c}}\in\mathbb{R}^{9}}|A_{z}\hat{c}-d_{z}|^2$, where $|\cdot|$ is the Euclidean $l^2$ norm.

In theory and practice, it is important to analyze the unique solvability of those local least-squares problem. If $A_z$ is of full column rank, then $c_z$ is the unique solution of the normal equation $A_z^\intercal A_zc_z=A_z^\intercal d_z.$ 
Given a scalar-valued $v$ and a vector-valued $\phi=(\phi_1,\phi_2)^\intercal,$ define
\begin{align*}
\curl v&:=(-\partial_{x_{2}}v,\partial_{x_{1}}v)^{\intercal},\\
\Curl{\phi}&:=(\curl\phi_{1},\curl\phi_{2})^{\intercal},\\
\Curl^s{\phi}&:=(\Curl{\phi}+(\Curl{\phi})^\intercal)/2.
\end{align*}
The next technical lemma is an important tool in the analysis of $R_h.$ 
\begin{lemma}\label{errexp}
For each $T\in\mathcal{T}_h$ with edges $\{e_k\}_{k=1}^3$, let $\ell_k$ be the length of $e_k$, ${n}_k$ the outward unit normal to $e_k$, ${t}_k$ the counterclockwise unit tangent to $e_k$, $\lambda_k$ the barycentric coordinate at the vertex opposite to $e_k$, $\phi_k=\lambda_{k-1}\lambda_{k+1}$ with $\lambda_0=\lambda_3, \lambda_4=\lambda_1$. Given ${\tau}\in[\mathcal{P}_{1}(T)]_{s}^{4}$ with $\int_{e_k}{\tau}_{nn}ds=0$, $k=1,2,3$, we have
\begin{equation*}
\tau=\Curl^s{r}_{{\tau}},\quad r_\tau=\sum_{k=1}^3\gamma_{k,{\tau}}\phi_k,
\end{equation*}
where  $\{{\gamma}_{k,{\tau}}\}_{k=1}^3$ are given by
\begin{subequations}\label{abk}
\begin{align}
{\gamma}_{k,{\tau}}\cdot{n}_k&=\frac{\ell_k^2}{2}{n}_k^{\intercal}
\partial_{{t}_k}{\tau}{n}_k,\label{normal}\\
{\gamma}_{k,{\tau}}\cdot{t}_k&=\frac{\ell_k^2}{2}{n}_k^{\intercal}
\partial_{{n}_k}{\tau}{n}_k+\ell_k^2{t}_k^\intercal
\partial_{{t}_k}{\tau}{n}_k.\label{tangential}
\end{align}
\end{subequations}
\end{lemma}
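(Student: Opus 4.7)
The plan is to exhibit $\Curl^s$ as a linear bijection between the $6$-dimensional bubble space $B := \{\sum_{k=1}^3 \gamma_k \phi_k : \gamma_k \in \mathbb{R}^2\}$ and the $6$-dimensional space $V_0 := \{\mu \in [\mathcal{P}_1(T)]_s^4 : \int_{e_k}\mu_{nn}\,ds = 0 \text{ for } k = 1,2,3\}$, and then to extract the explicit coefficients in \eqref{abk}. The key pointwise identities, verified by a direct index computation, are
$$(\Curl^s r)_{nn} = -\partial_t(n\cdot r), \qquad (\Curl^s r)_{tt} = \partial_n(t\cdot r),$$
together with $(\Curl^s r)_{nt} = \tfrac{1}{2}(\partial_n(n\cdot r) - \partial_t(t\cdot r))$.

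For the bijectivity, the inclusion $\Curl^s(B) \subset V_0$ follows because each $\phi_k$ vanishes at the three vertices of $T$, so for $r \in B$ the first identity gives $\int_{e_k}(\Curl^s r)_{nn}\,ds = -[n_k \cdot r]_{\text{endpts of }e_k} = 0$. For injectivity, $\Curl^s r = 0$ forces $r$ to be affine of the form $(a x_1 + b, a x_2 + c)$; since the homogeneous quadratic parts of the three bubbles $\phi_k = \lambda_{k-1}\lambda_{k+1}$ are the forms $(\nabla\lambda_{k-1}\cdot x)(\nabla\lambda_{k+1}\cdot x)$, which are linearly independent (a short calculation using $\nabla\lambda_1 + \nabla\lambda_2 + \nabla\lambda_3 = 0$), the span of $\{\phi_1,\phi_2,\phi_3\}$ meets $\mathcal{P}_1(T)$ only at $0$, so each scalar component of $r \in B$ must vanish and $r = 0$. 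Equal dimensions then yield a bijection.

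For the coefficients, restrict to $e_k$ and parameterize by arc length $s \in [0, \ell_k]$. Only $\phi_k$ is nonzero on $e_k$, with $\phi_k|_{e_k}(s) = s(\ell_k - s)/\ell_k^2$; applying the first identity gives $(\Curl^s r_\tau)_{nn}|_{e_k} = -(\gamma_k \cdot n_k)(\ell_k - 2s)/\ell_k^2$, and matching slopes with the linear polynomial $\tau_{nn}|_{e_k}$ yields \eqref{normal}. The most efficient way to establish \eqref{tangential} is to verify the two formulas in \eqref{abk} directly on the basis $\{\Curl^s(\phi_k n_k),\Curl^s(\phi_k t_k)\}_{k=1}^3$ of $V_0$: substitute each basis element into the right-hand sides of \eqref{normal} and \eqref{tangential} and check that the result equals $\delta_{jk}$ times the appropriate component of $n_k$ or $t_k$. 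The main obstacle is the case $\tau = \Curl^s(\phi_k t_k)$ with $j = k$: here $\tau_{nn} \equiv 0$ and one must recover $\gamma_k \cdot t_k = 1$ from $\ell_k^2\,\partial_{t_k}\tau_{nt}$ alone, which reduces to the geometric identity $(t_k \cdot \nabla\lambda_{k-1})(t_k \cdot \nabla\lambda_{k+1}) = -1/\ell_k^2$ encoding the rates of change of barycentric coordinates along $e_k$. The remaining basis cases follow from similar (and simpler) computations using $t_j \cdot \nabla\lambda_j = 0$.
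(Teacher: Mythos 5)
Your proof is correct, and it rests on the same skeleton as the paper's: represent $\tau$ as $\Curl^s$ of a quadratic field spanned by the edge bubbles $\phi_k$, then read off the coefficients using the identity $\partial_{t_k}^2\phi_i=-2\delta_{ki}/\ell_k^2$ (your $(t_k\cdot\nabla\lambda_{k-1})(t_k\cdot\nabla\lambda_{k+1})=-1/\ell_k^2$, together with $t_k\cdot\nabla\lambda_k=0$). The execution differs in two genuine ways. First, you establish bijectivity of $\Curl^s$ from the bubble space onto $V_0$ up front, by characterizing the kernel of $\Curl^s$ as the affine fields $(ax_1+b,\,ax_2+c)$ and showing the bubble span meets $\mathcal{P}_1(T)$ trivially; the paper instead postpones linear independence of $\{\Curl^s(\phi_i e_j)\}$ and recovers it a posteriori by applying the coefficient formulas to the zero tensor. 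Your route is more self-contained and makes the underlying dimension-matched isomorphism explicit. Second, you obtain \eqref{normal} by restricting to $e_k$ and matching slopes of one-dimensional linear functions (arguably the cleanest step in either argument), and you verify \eqref{tangential} by linearity on the basis $\{\Curl^s(\phi_k n_k),\Curl^s(\phi_k t_k)\}$, isolating the single nontrivial diagonal case; the paper instead derives both formulas for general $\tau$ by applying $\partial_{t_k}$ and $\partial_{n_k}$ to a single pointwise identity for $d_1^\intercal\tau d_2$ and eliminating the mixed derivatives $\partial_{t_k}\partial_{n_k}\phi_i$ between the two resulting equations. The paper's derivation avoids case analysis; yours trades that for shorter individual computations, and your off-diagonal cases do close as claimed since $\partial_{t_k}^2\phi_j=0$ for $j\neq k$ and the mixed-derivative terms cancel. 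One small step both treatments leave implicit is that surjectivity needs $\dim V_0=6$, i.e., linear independence of the three functionals $\tau\mapsto\int_{e_k}\tau_{nn}\,ds$ on $[\mathcal{P}_1(T)]_s^4$; this is immediate because the rank-one forms $n_kn_k^\intercal$ for three pairwise non-parallel normals already span the symmetric $2\times2$ matrices.
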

\begin{proof}
Let $e_1=(1,0)^\intercal$, $e_2=(0,1)^\intercal$. For the time being, we assume 
\begin{equation}\label{basis}
\big\{\Curl^s(\phi_ie_1)\big\}_{i=1}^3\cup\big\{\Curl^s(\phi_ie_2)\big\}_{i=1}^3
\end{equation}
are linearly independent. The reason will be given at the end of the proof.

Direct calculation shows that for $1\leq i, k\leq3,$
\begin{equation}
\int_{e_k}\big(\Curl^s(\phi_ie_1)\big)_{nn}ds=\int_{e_k}\big(\Curl^s(\phi_ie_2)\big)_{nn}ds=0.
\end{equation}
Then by 
$\int_{e_k}\tau_{nn}ds=0$ with $1\leq k\leq3$ and counting the dimension, we have 
\begin{equation}\label{expsigl}
\tau=\sum_{i=1}^3
\alpha_{i}\Curl^s\big(\phi_ie_1\big)+\sum_{i=1}^{3}\beta_i\Curl^s\big(\phi_ie_2\big)=\Curl^sr_{\tau},
\end{equation}
where $r_{\tau}=\sum_{i=1}^{3}\phi_{i}{\gamma}_{i,{\tau}}$ with $\gamma_{i,{\tau}}:=(\alpha_{i},\beta_{i})^{\intercal}$ an undetermined constant vector.
Given two unit vectors $d_{1}$ and $d_{2}$, it follows from \eqref{expsigl} that
\begin{equation}\label{basicidtt}
d_{1}^\intercal{\tau}d_2
=-\frac{1}{2}\sum_{i=1}^{3}\left(d_1^{\intercal}
{\gamma}_{i,{\tau}}\frac{\partial\phi_i}{\partial d_2^\perp}
+\frac{\partial\phi_i}{\partial d_1^\perp}{\gamma}_{i,{\tau}}^\intercal{d}_2\right),
\end{equation}
where $d_i^\perp$ is the vector from rotating $d_i$ by $\pi/2$ counterclockwise. Recall that $\delta_{ki}$ is the Kronecker delta. 
Applying $\partial_{t_{k}}$ to \eqref{basicidtt} with $d_{1}=d_{2}=n_{k}$,
and using 
\begin{equation}\label{deltaki}
\partial_{t_{k}}^{2}\phi_{i}=-2\delta_{ki}/\ell_k^2,
\end{equation}
we obtain the normal component of $\gamma_{k,\tau}$ in \eqref{normal}. Applying $\partial_{t_k}$ to \eqref{basicidtt} with $d_{1}=t_{k}, d_{2}=n_{k}$ and using \eqref{deltaki}, we have 
\begin{equation}\label{tang1}
t_{k}^{\intercal}\partial_{t_k}{\tau}{n}_{k}=\frac{1}{\ell_k^2}t_k^{\intercal}{\gamma}_{k,\tau}+\frac{1}{2}\sum_{i=1}^{3}\frac{\partial^2\phi_i}{\partial t_k\partial n_k}{\gamma}_{i,\tau}^\intercal n_k.
\end{equation}
Finally $\partial_{n_k}$ to \eqref{basicidtt} with $d_{1}=d_{2}=n_{k}$ leads to  
\begin{equation}\label{tang2}
n_k^{\intercal}\partial_{n_k}\tau n_k=-\sum_{i=1}^{3}\gamma_{i,\tau}^\intercal n_k\frac{\partial^2\phi_i}{\partial t_k\partial n_k}.
\end{equation}
Comparing \eqref{tang2} with \eqref{tang1}, we obtain the tangential component of $\gamma_{k,\tau}$ in \eqref{tangential}. To show the linear independence of \eqref{basis}, suppose
$$\sum_{i=1}^{3}
\alpha_{i}^\prime\Curl^s\big(\phi_{i}e_1\big)+\sum_{i=1}^{3}\beta_{i}^{\prime}\Curl^s\big(\phi_{i}e_2\big)=0.$$ 
Then by running the same argument below \eqref{expsigl}, we obtain that both the normal and tangential components of 
$(\alpha_{i}^{\prime},\beta_{i}^{\prime})$ are zero, i.e., $\alpha_{i}^{\prime}=\beta_{i}^{\prime}=0$. 
\qed\end{proof}

We say two angles are adjacent provided they share a common vertex and are contained in a pair of triangles sharing an edge. 
The next lemma gives a practical criterion for checking the well-posedness of vertex-based least-squares problems in Definition \ref{Rh}. Assumptions in Lemma \ref{uniqueness} were first used in \cite{NagaZhang2004}. It is interesting to see that our least-squares problems are related to the polynomial preserving recovery of linear Lagrange elements introduced in \cite{NagaZhang2004}.
\begin{lemma}\label{uniqueness}
Let $z$ be an interior vertex and $n_z$ the number of grid vertices in $\Omega_z$ that are directly connected to $z$. Assume $n_z\geq4$ and (1): the sum of each pair of adjacent angles in $\Omega_z$ is no greater than $\pi$; and (2): in addition, when $n_z=4$, vertices in $\Omega_z$ are not lying on two lines.
Then there exists a unique $\tau_z\in[\mathcal{P}_1(\Omega_z)]_s^4$ at $z$ in Definition \ref{Rh}. 
\end{lemma}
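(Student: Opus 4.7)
The plan is to translate the least-squares uniqueness into an injectivity statement, invoke Lemma \ref{errexp} to reveal a hidden $\Curl^s$ structure, and finally reduce the problem to the polynomial preserving recovery (PPR) of \cite{NagaZhang2004}.

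Existence of a minimizer is automatic since the objective is a continuous convex quadratic on a finite-dimensional space; uniqueness is equivalent to $A_z$ having full column rank, i.e., to the linear map $\Phi\colon\tau\mapsto(\tau_{nn}(m_e))_{e\in\mathcal{E}_h(\Omega_z)}$ being injective on $[\mathcal{P}_1(\Omega_z)]_s^4$. So suppose $\tau\in\ker\Phi$. Since $\tau_{nn}$ is affine along each edge, $\tau_{nn}(m_e)=0$ is equivalent to $\int_e\tau_{nn}\,ds=0$, making Lemma \ref{errexp} applicable on every $T\in\mathcal{T}_h(\Omega_z)$ and yielding $\tau|_T=\Curl^s r_T$. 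Because $\tau$ is globally a degree-one symmetric-matrix polynomial, the directional derivatives entering \eqref{abk} are constants; extending the expansion $r_{T_0}$ of one fixed triangle $T_0$ to a global quadratic vector polynomial $R$ on $\mathbb{R}^2$, the two globally linear polynomials $\tau$ and $\Curl^s R$ coincide on the open set $T_0$ and therefore throughout $\Omega_z$, so $\tau=\Curl^s R$ on $\Omega_z$.

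A direct calculation yields the pointwise identity $(\Curl^s R)_{nn}=-\partial_t(R\cdot n)$, and since $R\cdot n_e$ is a scalar quadratic polynomial, its $t_e$-derivative at the midpoint of an edge $e=[v,w]$ equals $(R(w)-R(v))\cdot n_e/\ell_e$. The assumption then reads $(R(v)-R(w))\cdot n_e=0$, i.e., $R(v)-R(w)\parallel t_e$, for every $e\in\mathcal{E}_h(\Omega_z)$. Placing $z$ at the origin, each radial edge yields $R(z_i)=R(z)+\lambda_i z_i$ for some scalar $\lambda_i$, and an outer edge $z_iz_{i+1}$ combined with this forces $(\lambda_i-\lambda_{i+1})(z_{i+1}\times z_i)=0$; linear independence of $z_i,z_{i+1}$ (the fan triangles are non-degenerate) gives $\lambda_i=\lambda_{i+1}$. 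Circulating around $z$ collapses the $\lambda_i$ to a single $\lambda$, and an analogous pair-matching at each extra vertex of $\Omega_z$ (using two incident edges that are not collinear with it) propagates $R(v)=R(z)+\lambda v$ across the entire vertex set of $\Omega_z$.

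Set $\phi(x):=R(z)+\lambda(x-z)$. Each scalar component of the quadratic polynomial $R-\phi$ vanishes at every vertex of $\Omega_z$, and hypotheses (1), (2) are precisely those under which the PPR of \cite{NagaZhang2004} is injective on $\Omega_z$: the only quadratic polynomial vanishing at all such vertices is zero. Therefore $R=\phi$, and a direct check of $\Curl^s\phi=0$ yields $\tau=\Curl^s R=0$. The main obstacle I anticipate is the propagation of $\lambda$ to the extra vertices in the delicate $n_z=4$ case; hypothesis (2) (non-collinearity on two lines) is exactly what supplies a pair of linearly independent edge constraints at each such vertex and what keeps the Naga--Zhang PPR injectivity applicable in the concluding step.
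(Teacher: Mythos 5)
Your proof is correct in substance and reaches the same endpoint (Theorem 2.3 of \cite{NagaZhang2004}) as the paper, but the middle of the argument is genuinely different. The paper also writes $\tau|_T=\Curl^s r_T$ via Lemma \ref{errexp}, but it then uses the explicit formulas \eqref{abk} to verify that the traces of $r_T$, $\partial_{t_e}r_T$, $\partial_{n_e}r_T$ and all second derivatives match across every interior edge of $\Omega_z$, so that the piecewise potentials glue into a single $r_z\in[\mathcal{P}_2(\Omega_z)]^2$; since each $r_T$ is a combination of edge bubbles, $r_z$ vanishes at every vertex \emph{by construction}, and Naga--Zhang applies immediately. You shortcut the gluing entirely by observing that $\tau$ is one global linear polynomial, so the extension $R$ of a single local potential satisfies $\tau=\Curl^s R$ everywhere --- a clean observation that removes the most tedious part of the paper's proof. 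The price is that your $R$ no longer vanishes at the vertices, so you must recover that information from the midpoint conditions via the identity $(\Curl^s R)_{nn}=-\partial_t(R\cdot n)$ and the exact midpoint-derivative rule for quadratics, and then run the rigid-motion propagation $R(v)=R(z)+\lambda v$ before subtracting the affine field $\phi$ (which indeed lies in the kernel of $\Curl^s$). This makes the $3$-dimensional kernel of $\Curl^s$ on affine fields explicit, something the edge-bubble representation quotients out silently. The one place where your argument is less robust than the paper's is the propagation to the ``extra'' vertices of $\Omega_z$ (triangles surrounding $z$ but not containing it): you need each new vertex to be joined by two non-collinear edges to already-determined vertices, which holds whenever the patch can be built by attaching triangles along shared edges, but this shellability is not spelled out; the paper's edge-by-edge gluing only needs $\Omega_z$ to be edge-connected and so sidesteps the issue. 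A small further simplification available to you: since $R$ extends $r_{T_0}$ it already vanishes at the three vertices of $T_0$, which forces $R(z)=0$ and $\lambda=0$ directly, so $R$ itself vanishes at all vertices and the detour through $\phi$ is unnecessary.
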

\begin{proof}
Assume $\int_e(\tau_{z})_{nn}ds=0$ for each $e\in\mathcal{E}_h(\Omega_z)$. Then for each $T\in\mathcal{T}_h(\Omega_z)$, Lemma \ref{errexp} implies 
\begin{equation}\label{taui}
\tau_{z}|_T=\Curl^sr_T,\quad {r}_T=\sum_{k=1}^3\gamma_{k,{\tau_{z}|_T}}\phi_{k}|_T\in[\mathcal{P}_2(T)]^2.
\end{equation} 
Let $r_z$ be the piecewise quadratic polynomial on $\Omega_z$ with $r_z|_T=r_T$, $\forall T\in\mathcal{T}_h(\Omega_z)$. We claim that $r_z$ is indeed a quadratic polynomial on $\Omega_z$. For each interior edge $e\in\mathcal{E}_h(\Omega_z)$ shared by $T, T^{\prime}\in\mathcal{T}_h(\Omega_{z})$, the explicit formulas for $\gamma_{k,{\tau}_{z}|_T}$, $\gamma_{k,{\tau}_{z}|_{T^\prime}}$ in Lemma \ref{errexp} imply that $r_T=r_{T^{\prime}}$ on $e$ and
\begin{equation}\label{tcts}
\partial_{t_e}{r_T}=\partial_{t_e}{r_{T^{\prime}}}\text{ on }e,\quad\partial_{t_e}^2{r_T}=\partial_{t_e}^2{r_{T^{\prime}}}\text{ on } e.
\end{equation}
Therefore $r_z$ is continuous. A direct consequence of \eqref{taui} is
\begin{equation}\label{normalcts}
\begin{aligned}
&t_e^\intercal\partial_{n_e}{r_T}=t_e^\intercal{\tau}_{z}t_e\text{ on }T,\quad
t_e^\intercal\partial_{n_e}{r_{T^{\prime}}}=t_e^{\intercal}{\tau}_zt_e\text{ on } T^{\prime},\\
&\frac{1}{2}\big(n_e^\intercal\partial_{n_e}{r_T}-t_{e}^{\intercal}\partial_{t_e}{r_T}\big)=t_e^{\intercal}\tau_zn_e\text{ on }T,\\
&\frac{1}{2}\big(n_{e}^{\intercal}\partial_{n_e}{r_{T^{\prime}}}-t_e^{\intercal}\partial_{t_e}{r_{T^{\prime}}}\big)=t_{e}^{\intercal}{\tau}_{z}n_e\text{ on }T^{\prime},
\end{aligned}
\end{equation} 
which leads to
\begin{equation}\label{ncts}
    \partial_{n_e}r_T=\partial_{n_e}r_{T^{\prime}}\text{ on }e.
\end{equation}
Applying $\partial_{t_e}$ to \eqref{normalcts} and using \eqref{tcts} yield 
\begin{equation}\label{tncts}
    \partial_{t_e}\partial_{n_e}r_T=\partial_{t_e}\partial_{n_e}r_{T^{\prime}}\text{ on }e.
\end{equation}
Applying $\partial_{n_e}$ to \eqref{normalcts} and using \eqref{tncts} yield 
\begin{equation}\label{nncts}
    \partial_{n_e}^2r_T=\partial_{n_e}^2r_{T^{\prime}}\text{ on }e.
\end{equation}
Therefore collecting \eqref{tcts} and \eqref{ncts}--\eqref{nncts}, we confirm  $r_z\in[\mathcal{P}_2(\Omega_{z})]^2$. 
By the definition, $r_z$ vanishes at all vertices in $\mathcal{T}_h(\Omega_z)$. 
It then follows from the given assumptions on $\Omega_z$ and Theorem 2.3 of \cite{NagaZhang2004} that $r_z=0$ and thus ${\tau}_z=\Curl^sr_z=0$.

In summary, we conclude that $\int_e({\tau}_z)_{nn}ds=0$ $\forall e\in\mathcal{E}_h(\Omega_z)$ implies ${\tau}_z={0}$. In other words, for $c\in\mathbb{R}^9,$ ${A}_z{c}={0}$ implies ${c}=0$, and there exists a unique least-squares solution ${\tau}_z$ at $z$.
\qed\end{proof}

\begin{figure}[ht]
\begin{subfigure}{.48\textwidth}
  \centering
  \includegraphics[width=.8\linewidth]{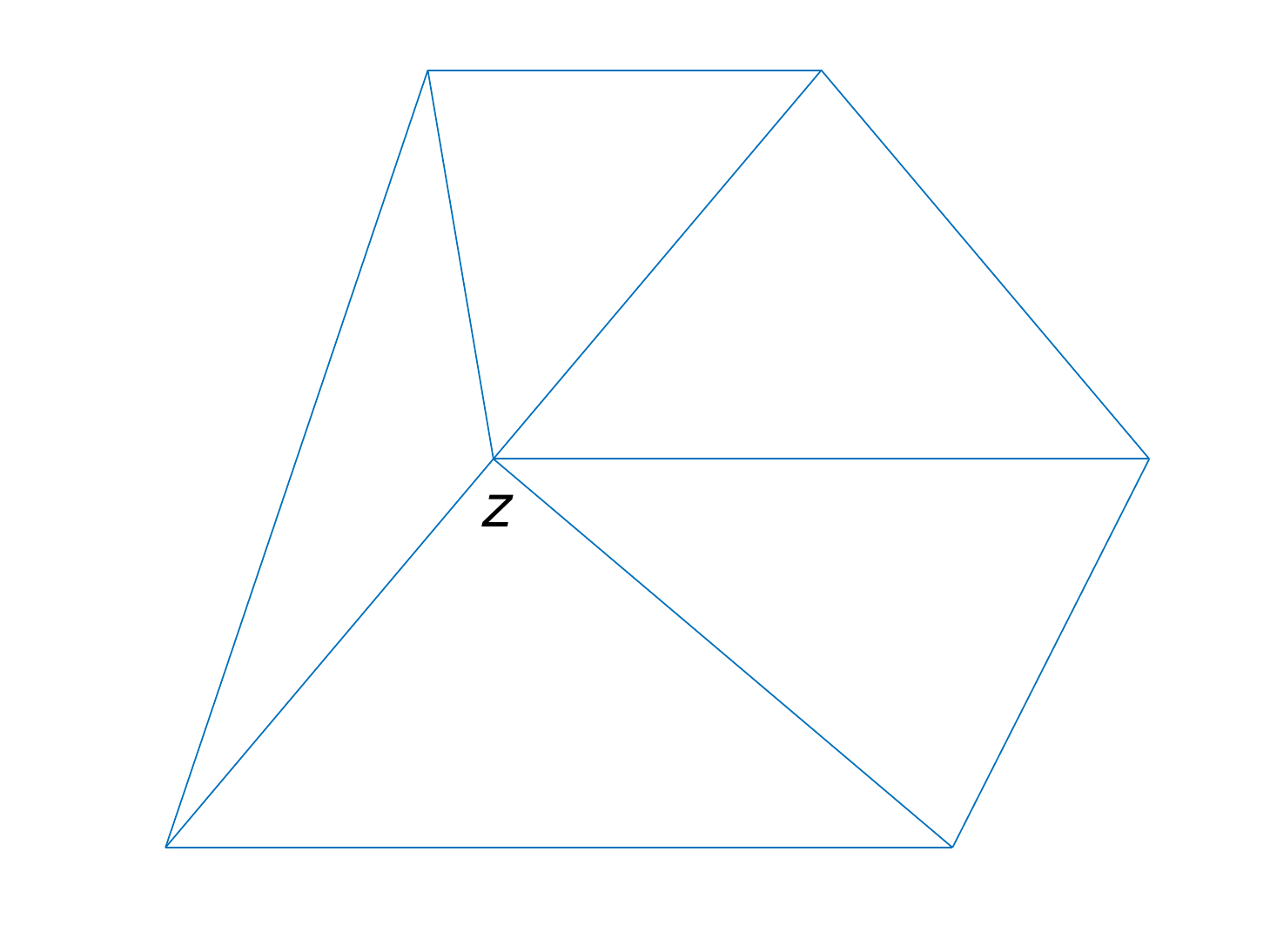}  
  \caption{Assumption (1) is violated.}
  \label{effeta11}
\end{subfigure}
\begin{subfigure}{.48\textwidth}
  \centering
  \includegraphics[width=.8\linewidth]{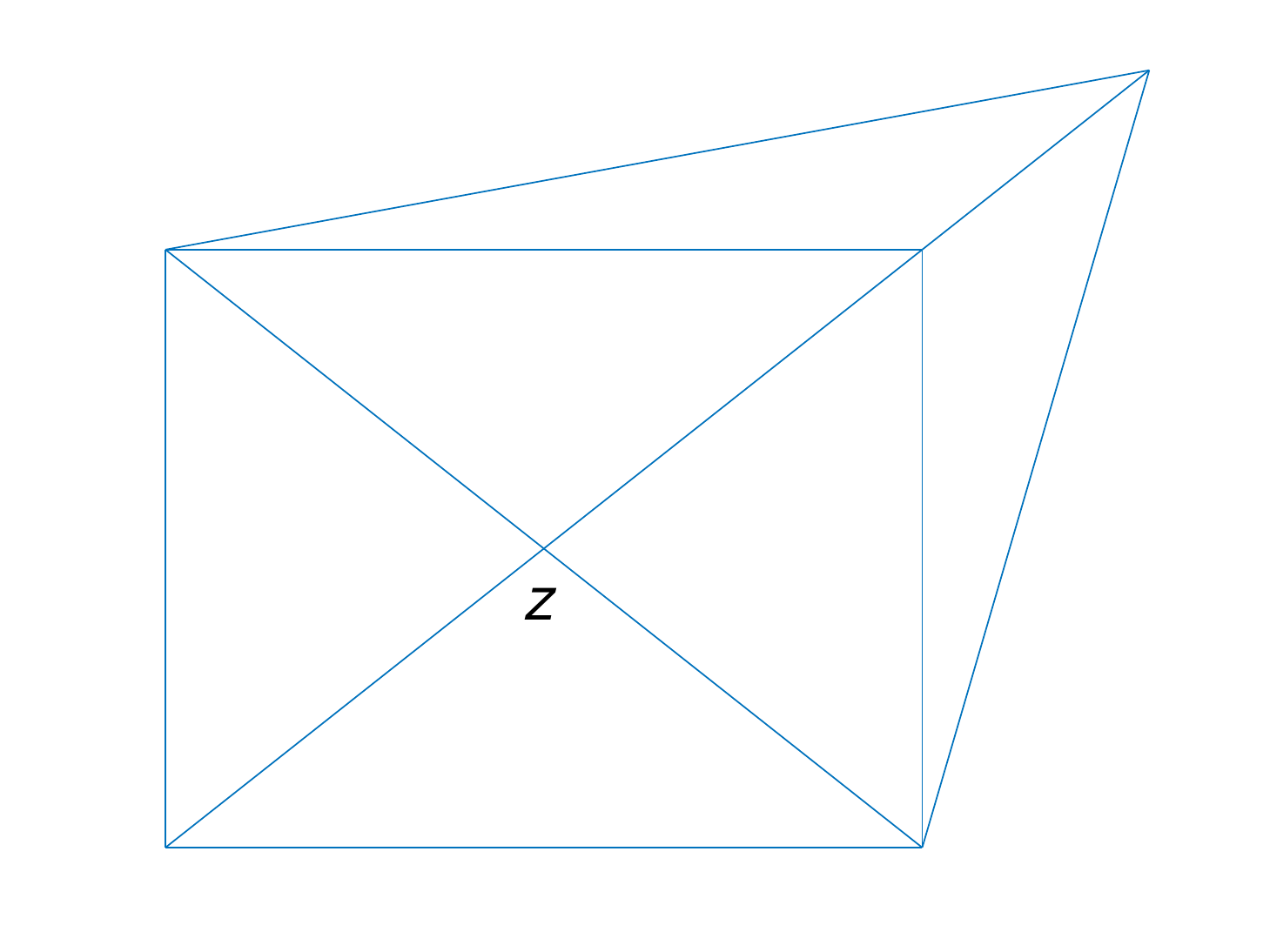}  
  \caption{Assumption (2) is violated.}
  \label{effzeta11}
\end{subfigure}
\caption{{Vertex patches $\Omega_z$ violating assumptions in Lemma} \ref{uniqueness}.}
\label{counterexample}
\end{figure}
For each interior vertex $z$,
we start with a small vertex patch $\Omega_z$, e.g., the union of triangles sharing $z$. In a few cases when the assumption in Lemma \ref{uniqueness} is violated, the local least-squares problem may not have a unique solution, {see Fig.~\ref{counterexample} for examples}. To guarantee the uniqueness, one could add some extra elements to the patch $\Omega_z$, e.g., enlarge $\Omega_z$ by one layer. 

By definitions of $\Omega_z$ and $R_h$, the fitting polynomial ${\tau}_z$ is unique at each boundary vertex $z$ provided the condition in Lemma \ref{uniqueness} holds for all internal vertices. In theory, $R_h$ could be applied to more general functions than members of $\Sigma_h^1.$ For $T\in\mathcal{T}_h,$ let $\Omega_T=\bigcup\{T^{\prime}\in\mathcal{T}_h: \overline{T}^{\prime}\cap \overline{T}\neq\emptyset\}.$
The well-posedness of least-squares problems implies $R_{h}{\tau}=\tau$ on $T$ for ${\tau}\in[\mathcal{P}_1(\Omega_T)]_{s}^{4}$, which is called the ``polynomial preserving property'' in \cite{NagaZhang2004}. The super-approximation property of $R_h$ then follows from a local scaling argument, see the proof of Theorem 3.3 in \cite{NagaZhang2004} or Theorems 2.2 and 2.3 of \cite{BankLi2019} for details.
\begin{theorem}\label{superRh}
Assume that the condition in Lemma \ref{uniqueness} holds for all internal vertices in $\mathcal{V}_h^o$. Then given ${\tau}_h\in{\Sigma}^1_h$ and $T\in\mathcal{T}_h$,  we have
\begin{subequations}
\begin{align}
\|R_h{\tau}_h\|_T&\lesssim
\|{\tau}_h\|_{\Omega_T},\\
\|{\sigma}-R_h{\sigma}\|_T&\lesssim h_T^2|{\sigma}|_{2,\Omega_T}.
\end{align}
\end{subequations}
\end{theorem}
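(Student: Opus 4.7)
The plan is to establish the two bounds in succession by combining the polynomial preserving property $R_h\tau=\tau$ on $T$ for every $\tau\in[\mathcal{P}_1(\Omega_T)]_s^4$ (already noted in the paragraph preceding the theorem as a consequence of the uniqueness of the local least-squares problems) with a scaling argument and a Bramble--Hilbert step, following the template of Theorem 3.3 in \cite{NagaZhang2004} and Theorems 2.2--2.3 of \cite{BankLi2019}.

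First I would prove the local stability $\|R_h\tau_h\|_T\lesssim\|\tau_h\|_{\Omega_T}$. For each vertex $z$ of $T$, the linear mapping $\tau_h\mapsto\tau_z=\tau_z(\tau_h)$ is given, in linear algebra form, by $c_z=(A_z^\intercal A_z)^{-1}A_z^\intercal d_z$, where the entries of $d_z$ are the edge-midpoint normal-normal values of $\tau_h$ on $\Omega_z$. After pulling back to a reference patch of unit size via the shape-regular affine scaling $x\mapsto h_T^{-1}(x-z)$, one obtains only finitely many equivalence classes of patch configurations (up to shape-regularity), so Lemma \ref{uniqueness} together with a compactness argument yields a uniform lower bound on the smallest singular value of the reference Gram matrix. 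Rescaling then gives the nodal bound $|(R_h\tau_h)(z)|\lesssim\max_{e\subset\Omega_z}|(\tau_h)_{nn}(m_e)|$, and an inverse trace inequality (applicable since $\tau_h|_{T'}$ is piecewise constant for $r=1$) bounds each midpoint value by $h_{T'}^{-1}\|\tau_h\|_{T'}$. Summing the nodal contributions over vertices of $T$ and using that shape-regularity keeps $\Omega_z$ for $z\in\overline T$ within a fixed-depth neighborhood of $T$ delivers the claimed estimate.

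Next I would deduce the super-approximation bound from stability via a quasi-interpolation argument. Let $\sigma_T\in[\mathcal{P}_1(\Omega_T)]_s^4$ be a Bramble--Hilbert approximation of $\sigma$ on the patch $\Omega_T$, e.g.\ a component-wise averaged Taylor polynomial of degree one centered at the barycenter of $T$. The polynomial preservation gives $R_h\sigma_T=\sigma_T$ on $T$, whence on $T$
\begin{equation*}
\sigma-R_h\sigma=(\sigma-\sigma_T)-R_h(\sigma-\sigma_T).
\end{equation*}
Applying the triangle inequality, the stability bound just established to the second term (with $\tau_h$ replaced by a piecewise interpolant of $\sigma-\sigma_T$, noting that $R_h$ extends continuously to $[L^2]_s^4$-smooth data through its midpoint-trace definition), and the standard Bramble--Hilbert estimate
\begin{equation*}
\|\sigma-\sigma_T\|_{\Omega_T}\lesssim h_T^2|\sigma|_{2,\Omega_T}
\end{equation*}
to the first term yields the result.

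The main obstacle is the uniform stability of the normal equations across the mesh: one must show that $\|(A_z^\intercal A_z)^{-1}\|$ remains bounded as $h\to0$, uniformly in $z$. Qualitative invertibility from Lemma \ref{uniqueness} is not by itself enough; the quantitative version requires the compactness/scaling step combined with the hypothesis that the patch geometries satisfy the conditions of Lemma \ref{uniqueness}. Once this uniform coercivity is in hand, everything else reduces to routine inverse inequalities, triangle inequalities, and a standard Bramble--Hilbert estimate.
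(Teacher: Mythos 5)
Your proposal follows essentially the same route as the paper, which does not write out a detailed proof but explicitly defers to the polynomial preserving property plus the local scaling/Bramble--Hilbert argument of Theorem 3.3 in \cite{NagaZhang2004} and Theorems 2.2--2.3 of \cite{BankLi2019} --- exactly the two steps (uniform stability of the local normal equations via shape-regularity and compactness, then $\sigma-R_h\sigma=(I-R_h)(\sigma-\sigma_T)$ with a first-degree averaged Taylor polynomial $\sigma_T$) that you spell out. The only imprecision is your remark that $R_h$ ``extends continuously to $[L^2]_s^4$-smooth data through its midpoint-trace definition'': midpoint values are not defined for $L^2$ data, so the second estimate should instead use stability of $R_h$ with respect to an $L^\infty(\Omega_T)$-type bound on $\sigma-\sigma_T$ (legitimate since $\sigma\in H^2$ embeds into $C^0$ in two dimensions), which is how the cited references handle it.
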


Another ingredient in the superconvergence analysis for the moment variable is the following supercloseness estimate on a uniform grid, see \cite{HuMaMa2021}.
\begin{lemma}\label{superclosesigma}
Assume that each pair of directly adjacent triangles in $\mathcal{T}_h$ forms a parallelogram. When $r=1,$ it holds that
\begin{equation*}
    \|\Pi_h{\sigma}-{\sigma}_h\|\lesssim h^2|\log h|^\frac{1}{2}\big(|{\sigma}|_{\frac{5}{2},\Omega}+\|\nabla{\sigma}\|_{L^{\infty}(\Omega)}\big).
\end{equation*}
\end{lemma}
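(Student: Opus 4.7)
The first step reduces the problem to a single interpolation pairing. Set $e_h:=\Pi_h\sigma-\sigma_h\in\Sigma_h$; coercivity \eqref{a} gives
\[
a_0\|e_h\|^2\le a(e_h,e_h) = a(\Pi_h\sigma-\sigma,e_h) + a(\sigma-\sigma_h,e_h).
\]
The second summand vanishes: combining \eqref{error1} with \eqref{commIh} yields $a(\sigma-\sigma_h,e_h) = -b_h(e_h,I_hu-u_h)$, and for any $v_h\in U_h$ the commutativity \eqref{commPih} and the error equation \eqref{error2} give $b_h(e_h,v_h) = b_h(\Pi_h\sigma,v_h)-b_h(\sigma_h,v_h) = b_h(\sigma-\sigma_h,v_h) = 0$. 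Since $I_hu-u_h\in U_h$ (recall $r=1$), this kills the second term and the claim reduces to showing
\[
\|e_h\|^2 \;\lesssim\; \big|(\mathbb{M}^{-1}(\Pi_h\sigma-\sigma),e_h)\big| \;\lesssim\; h^2|\log h|^{1/2}\big(|\sigma|_{\frac{5}{2},\Omega}+\|\nabla\sigma\|_{L^\infty(\Omega)}\big)\|e_h\|.
\]

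The second step extracts the supercloseness from the mesh hypothesis. The tensor $\xi:=\mathbb{M}^{-1}e_h$ is piecewise constant and $\Pi_h\sigma-\sigma$ carries the edge orthogonality $\int_e(\Pi_h\sigma-\sigma)_{nn}\,ds=0$ for all $e\in\mathcal{E}_h$. I would group the triangles into adjacent parallelograms $P=T\cup T'$ as the mesh hypothesis permits (with at most a boundary strip of unpaired triangles), and split $\xi|_P = \bar\xi_P + \delta\xi_P$ into its $P$-mean and across-diagonal jump. The point reflection about the centroid of $P$ interchanges $T$ and $T'$ together with their corresponding edges, so the midpoint rule is exact on $P$ for linear polynomials. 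Taylor-expanding $\sigma$ about the centroid of $P$ and combining the parallelogram symmetry with the moment condition of $\Pi_h$, the mean part $\sum_P\bar\xi_P:\int_P(\Pi_h\sigma-\sigma)\,dx$ collapses to an $O(h^2)$ Bramble--Hilbert remainder; summation with a fractional Bramble--Hilbert argument delivers the $h^2|\sigma|_{\frac52,\Omega}\|e_h\|$ contribution, the fractional exponent arising from the half-order trace regularity demanded by the edge interpolation conditions of $\Pi_h$.

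The jump piece does not benefit from the symmetry, and a direct H\"older bound gives only
\[
\sum_P\big|\delta\xi_P:\int_{T-T'}(\Pi_h\sigma-\sigma)\,dx\big| \;\lesssim\; h^2\,\|\nabla\sigma\|_{L^\infty(\Omega)}\,\|\delta\xi\|_{L^1(\Omega)}.
\]
It then remains to prove the discrete embedding $\|\delta\xi\|_{L^1(\Omega)}\lesssim|\log h|^{1/2}\|e_h\|$, the two-dimensional analogue of $\|v_h\|_{L^\infty}\lesssim|\log h|^{1/2}\|v_h\|_{H^1}$ applied to the piecewise constant tensor space $\Sigma_h^1$. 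This is the source of the logarithmic factor; I would obtain it by duality against a regularized pointwise functional combined with a Schatz--Wahlbin-type weighted estimate for the discrete Green's function of the HHJ system. The unpaired boundary triangles form an $O(h)$-measure strip and are absorbed into the same $\|\nabla\sigma\|_{L^\infty}$ term via a direct $L^1$--$L^\infty$ pairing.

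I expect the main obstacle to be the sharp embedding $\|\delta\xi\|_{L^1}\lesssim|\log h|^{1/2}\|e_h\|$. The HHJ Green's function is tensor-valued and coupled to the scalar deflection equation, so the scalar Schatz--Wahlbin machinery does not transfer verbatim: one must propagate the weighted $L^\infty$-type estimates through the mixed structure, relying on the inf-sup condition \eqref{infsup0} to close the bootstrap. A secondary subtlety is that parallelogram pairing near the transition points of $\Gamma_c$, $\Gamma_s$, and $\Gamma_f$ requires a separate but constant-in-number treatment of defect parallelograms, consistent with the uniform mesh assumption being imposed on every pair of directly adjacent triangles.
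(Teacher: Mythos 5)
First, a point of reference: the paper does not prove this lemma at all --- it is imported verbatim from \cite{HuMaMa2021} (``see \cite{HuMaMa2021}'' precedes the statement), so there is no internal proof to compare against. Your opening reduction is nevertheless correct and is the standard entry point: coercivity of $a$, the error equations \eqref{error1}--\eqref{error2}, and the commuting properties \eqref{commIh}, \eqref{commPih} do eliminate $a(\sigma-\sigma_h,e_h)$ and reduce the claim to bounding $(\mathbb{M}^{-1}(\Pi_h\sigma-\sigma),e_h)$. The point-symmetry cancellation for the parallelogram-mean part of the test tensor is also sound (the reflection sends the odd Taylor term of $\sigma$ to its negative and commutes with $\Pi_h$ on paired triangles, so that piece even comes out as $h^2|\sigma|_{2}\|e_h\|$).

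The gap is in the jump part, and it sits exactly where the whole difficulty of the lemma lives. Two things go wrong. (i) The bound $\sum_P\bigl|\delta\xi_P:\int_{T-T'}(\Pi_h\sigma-\sigma)\,dx\bigr|\lesssim h^2\|\nabla\sigma\|_{L^\infty}\|\delta\xi\|_{L^1}$ is off by a factor of $h$: the antisymmetric combination $\int_T-\int_{T'}$ enjoys \emph{no} cancellation at the linear Taylor level --- the same reflection argument shows the two contributions are equal rather than opposite, so they add --- and each is of size $|T|\cdot h\|\nabla\sigma\|_{L^\infty}$. Per parallelogram the pairing is therefore $O(|\delta\xi_P|\,h^3\|\nabla\sigma\|_{L^\infty})$, and since $\sum_P|\delta\xi_P|\,h^3\simeq h\|\delta\xi\|_{L^1}$, the honest crude bound is $h\|\nabla\sigma\|_{L^\infty}\|\delta\xi\|_{L^1}$, which yields only first order and no supercloseness. (ii) The ``discrete embedding'' $\|\delta\xi\|_{L^1}\lesssim|\log h|^{1/2}\|e_h\|$ that you identify as the main obstacle is in fact vacuous: on a bounded domain $\|\delta\xi\|_{L^1}\le|\Omega|^{1/2}\|\delta\xi\|\le|\Omega|^{1/2}\|\mathbb{M}^{-1}e_h\|\lesssim\|e_h\|$ by Cauchy--Schwarz, with no logarithm and no Green's function machinery. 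So it cannot be the source of the $|\log h|^{1/2}$; and if your accounting in (i) were correct, the lemma would follow without the log, contradicting the known form of the estimate. What is actually required is to show that the across-diagonal contributions, after further exact manipulation (elementwise integration-by-parts identities tied to the HHJ/Morley structure, converting volume pairings into edge pairings where the orthogonality $\int_e(\Pi_h\sigma-\sigma)_{nn}\,ds=0$ can act), are collectively one order smaller; the $|\log h|^{1/2}$ and the $\|\nabla\sigma\|_{L^\infty}$ term arise there from near-boundary element strips, not from an $L^1$ embedding of the test tensor. This is the substance of \cite{HuMaMa2021} and is not supplied by the proposal.
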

Although Lemma \ref{superclosesigma} is proved only under exactly uniform grids, such supercloseness estimate could often be extended to mildly structured meshes, see, e.g., \cite{BankXu2003a,XuZhang2004,Li2018SINUM,BankLi2019,Li2021JSC} for similar results in nodal, edge and nonconforming elements in $\mathbb{R}^2$. We also refer to \cite{WuZhang2007} for a supercloseness estimate of nodal elements on graded meshes. Now we are in a position to present the last main result. The proof directly follows from the triangle inequality
\begin{equation*}
\|{\sigma}-R_h\sigma_h\|\leq\|\sigma-R_h\sigma\|+\|R_h(\Pi_h\sigma-\sigma_h)\|
\end{equation*}
and Theorem \ref{superRh} and Lemma \ref{superclosesigma}.
\begin{theorem}\label{superRhsigmah}
Let the assumptions in Theorem \ref{superRh} and Lemma \ref{superclosesigma} hold. Then for $r=1$ we have
\begin{equation*}
    \|{\sigma}-R_{h}{\sigma}_{h}\|\lesssim h^2|\log h|^\frac{1}{2}\big(\|{\sigma}\|_{\frac{5}{2},\Omega}+\|\nabla{\sigma}\|_{L^{\infty}(\Omega)}\big).
\end{equation*}
\end{theorem}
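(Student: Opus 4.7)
The plan is to follow the decomposition
\[
\|\sigma - R_h\sigma_h\| \le \|\sigma - R_h\sigma\| + \|R_h(\Pi_h\sigma - \sigma_h)\|
\]
announced in the excerpt, and to control the two pieces separately using the two ingredients already available: Theorem \ref{superRh} for the super-approximation and boundedness of $R_h$, and Lemma \ref{superclosesigma} for the supercloseness $\|\Pi_h\sigma-\sigma_h\|$.

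For the first term, I would invoke the super-approximation estimate in Theorem \ref{superRh}(b) on each $T\in\mathcal{T}_h$,
\[
\|\sigma - R_h\sigma\|_T \lesssim h_T^{2}\,|\sigma|_{2,\Omega_T},
\]
then square and sum over $T\in\mathcal{T}_h$, absorbing into the hidden constant the fact that the overlapping patches $\{\Omega_T\}_{T\in\mathcal{T}_h}$ have a uniformly bounded covering multiplicity under shape regularity. This yields $\|\sigma - R_h\sigma\| \lesssim h^{2}\,|\sigma|_{2,\Omega}$, which by the continuous embedding $H^{5/2}(\Omega)\hookrightarrow H^2(\Omega)$ on the bounded Lipschitz domain $\Omega$ is in turn $\lesssim h^{2}\|\sigma\|_{5/2,\Omega}$, a strictly higher-order contribution than the target bound.

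For the second term, I would apply the local boundedness estimate in Theorem \ref{superRh}(a) with the argument $\tau_h = \Pi_h\sigma - \sigma_h \in \Sigma_h^{1}$,
\[
\|R_h(\Pi_h\sigma - \sigma_h)\|_T \lesssim \|\Pi_h\sigma - \sigma_h\|_{\Omega_T},
\]
and again square, sum, and use the finite overlap of the patches to obtain the global bound $\|R_h(\Pi_h\sigma - \sigma_h)\| \lesssim \|\Pi_h\sigma - \sigma_h\|$. Lemma \ref{superclosesigma} (whose hypothesis on the mesh is included in our assumption list) then bounds the right-hand side by $h^{2}|\log h|^{1/2}\bigl(|\sigma|_{5/2,\Omega} + \|\nabla\sigma\|_{L^\infty(\Omega)}\bigr)$, exactly matching the claimed rate. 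Adding the two bounds finishes the proof.

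There is no substantive obstacle here: all the hard work sits in Theorem \ref{superRh} and Lemma \ref{superclosesigma}, and the present statement is essentially their triangle-inequality synthesis. The only conceptual point worth noting is the interpretation of $R_h\sigma$ for the smooth $\sigma$, which is needed so that $R_h\sigma - R_h\sigma_h = R_h(\Pi_h\sigma - \sigma_h)$; this is forced by reading the local least-squares data for $R_h\sigma$ through the HHJ interpolant $\Pi_h$ (natural in the lowest order case $r=1$, since $\Pi_h$ is defined through edge averages of $\sigma_{nn}$) and using linearity of the least-squares map in its data.
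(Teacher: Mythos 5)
Your proof is correct and follows exactly the paper's own argument: the paper proves this theorem by the identical triangle inequality $\|\sigma-R_h\sigma_h\|\leq\|\sigma-R_h\sigma\|+\|R_h(\Pi_h\sigma-\sigma_h)\|$ combined with Theorem \ref{superRh} and Lemma \ref{superclosesigma}. Your closing remark about interpreting $R_h\sigma$ so that $R_h\sigma-R_h\sigma_h=R_h(\Pi_h\sigma-\sigma_h)$ holds by linearity is a detail the paper leaves implicit, and you resolve it in the natural way.
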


Although the proof of Theorem \ref{superRhsigmah} utilizes the result in Lemma \ref{superclosesigma}, we will show in the experiment that there exists apparent superconvergence of $\|{\sigma}-R_{h}{\sigma}_{h}\|$ while $\|\Pi_h{\sigma}-{\sigma}_{h}\|$ is not super-small at all, which suggests that supercloseness estimate is not a necessary condition for achieving postprocessing superconvergence.
\section{Numerical experiments}\label{secNE}
In this section, we test the performance of the error indicator $\eta_h(T)$ in Section \ref{sec02h}
and the estimator $\zeta_h=\big(\sum_{T\in\mathcal{T}_h}\zeta_h(T)^2\big)^\frac{1}{2}$ with $\zeta_h(T)=\big(\|\sigma_h-R_h\sigma_h\|^2_T+\|\nabla(u_h-u_h^*)\|_T^2\big)^\frac{1}{2}$ in Section \ref{sec01}. The numerical schemes are implemented in MATLAB R2020a. In all experiments we set $Ed^3/12=1$, the Poisson ratio $\nu=0.3$, and use the lowest order HHJ method \eqref{HHJ} ($r=1$). The adaptive algorithm  is based on the classical loop (cf.~\cite{Dorfler1996,NochettoSiebertVeeser2009})
\begin{equation*}
\begin{CD}
\textsf{Solve}@>>>\textsf{Estimate}@>>>\textsf{Mark}@>>>\textsf{Refine}.
\end{CD}
\end{equation*}
The module \textsf{Estimate} calculates  element-wise error indicators $\{\eta_h(T)\}_{T\in\mathcal{T}_h}$ (resp.~$\{\zeta_h(T)\}_{T\in\mathcal{T}_h}$) in the current {grid} $\mathcal{T}_h$.
The module \textsf{Mark} selects a minimal subset of elements $\mathcal{M}_h\subset\mathcal{T}_h$ satisfying
\begin{equation*}
    \sum_{T\in\mathcal{M}_h}\eta_h(T)^2\geq0.6\sum_{T\in\mathcal{T}_h}\eta_h(T)^2\quad\big(\text{resp.}\sum_{T\in\mathcal{M}_h}\zeta_h(T)^2\geq0.6\sum_{T\in\mathcal{T}_h}\zeta_h(T)^2\big).
\end{equation*}
The module \textsf{Refine} subdivides elements in $\mathcal{M}_h$ and minimal neighboring elements by the newest vertex bisection and outputs a new conforming grids, over which \eqref{HHJ} is solved and the element-wise errors are estimated again. The convergence of $\eta_h$-based adaptive algorithm is measured by the error $E_h:=\big(\|\sigma-\sigma_h\|^2+\|u-u_h^*\|_{2,h}^2\big)^\frac{1}{2}$ while $\zeta_h$-based adaptive algorithm is measured by $e_h:=\big(\|\sigma-\sigma_h\|^2+|u-u_h|_1^2\big)^\frac{1}{2}$. 
By $N$ we denote the  number of triangles in the current mesh. We compute the order of convergence $p$ by the MATLAB function \textsf{polyfit} such that the corresponding error is proportional to $N^p$.

\begin{figure}[ht]
\begin{subfigure}{.3\textwidth}
  \centering
  \includegraphics[width=1\linewidth]{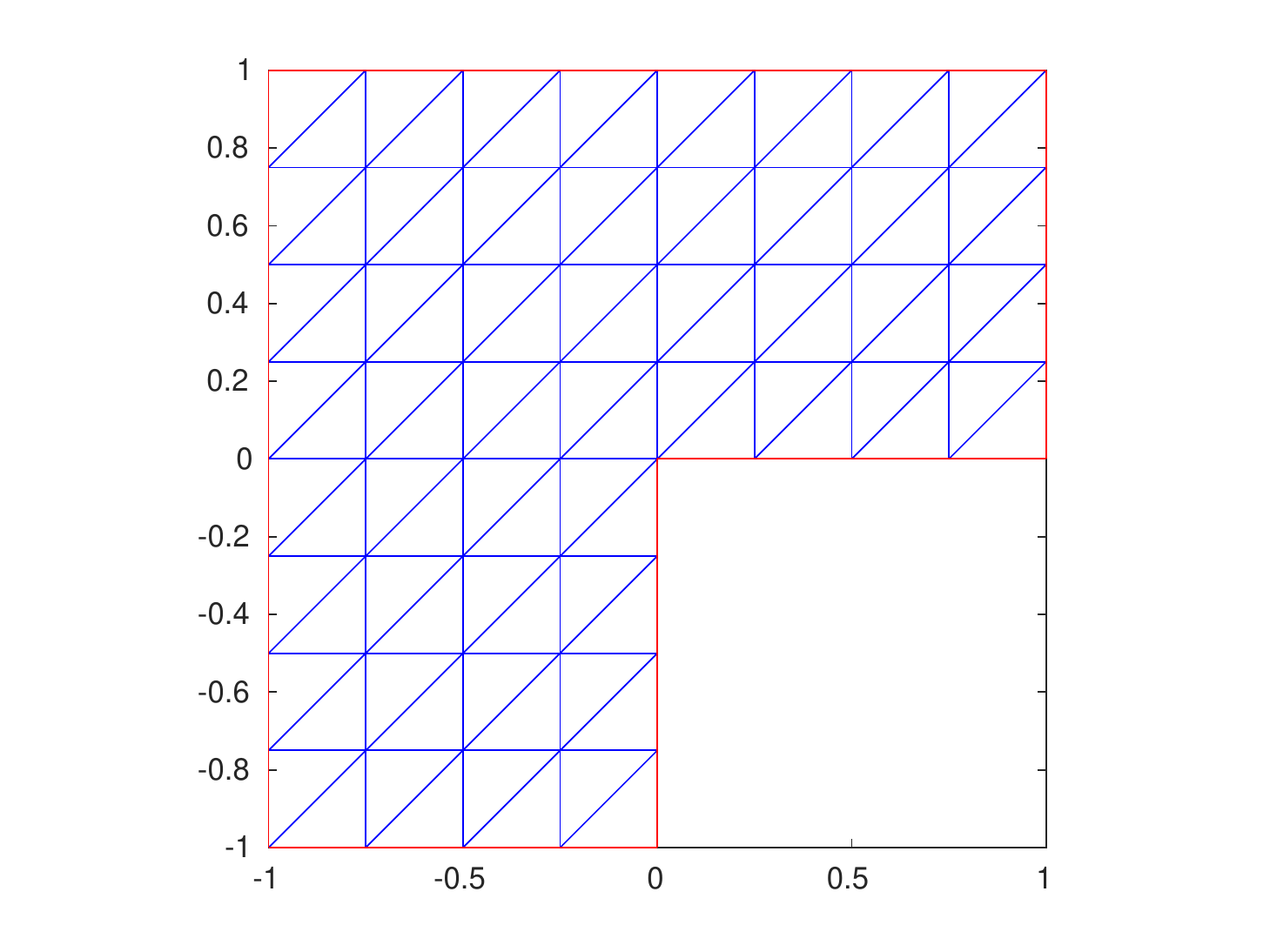}  
  \caption{Initial grid.}
  \label{initialL}
\end{subfigure}
\begin{subfigure}{.3\textwidth}
  \centering
  \includegraphics[width=1\linewidth]{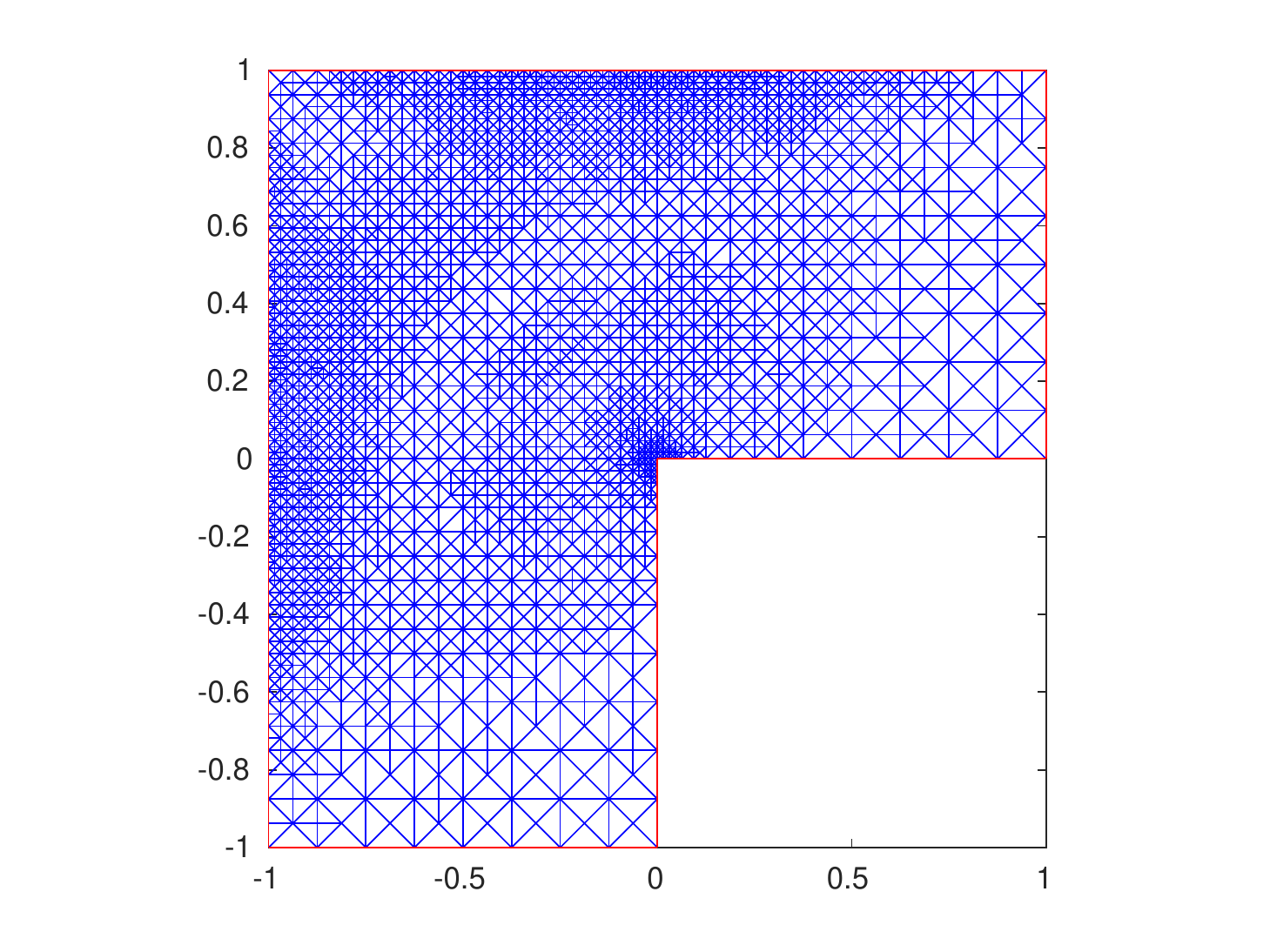}  
  \caption{{A} graded grid by $\eta_h$.}
  \label{graded1}
\end{subfigure}
\begin{subfigure}{.3\textwidth}
  \centering
  \includegraphics[width=1\linewidth]{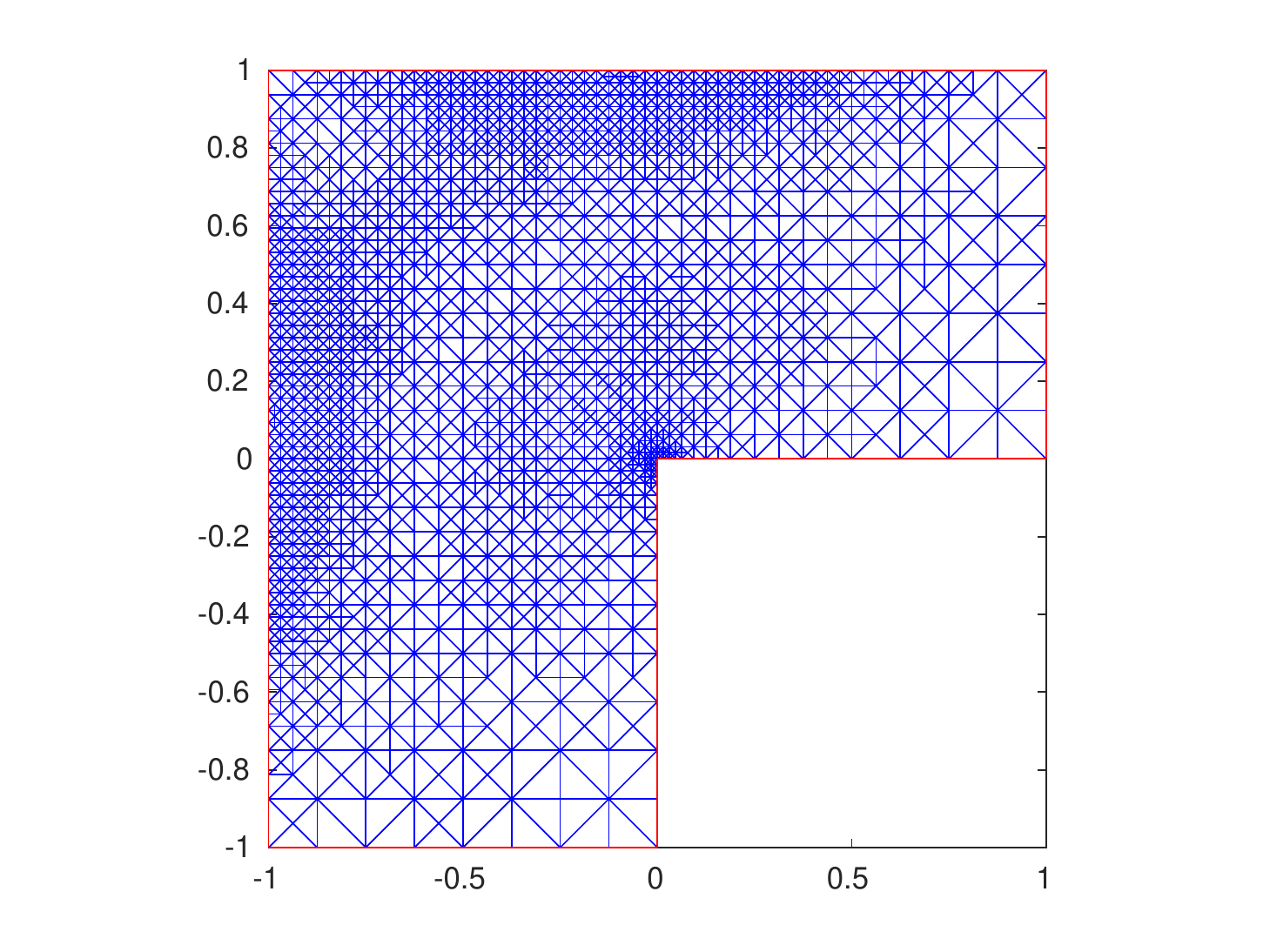}  
  \caption{{A} graded grid by $\zeta_h$.}
  \label{graded2}
\end{subfigure}
\caption{Grids on the L-shaped domain  {for Problem 1}.}
\label{gridclamped}
\end{figure}

\begin{figure}[ht]
\begin{subfigure}{.49\textwidth}
  \centering
  \includegraphics[width=.8\linewidth]{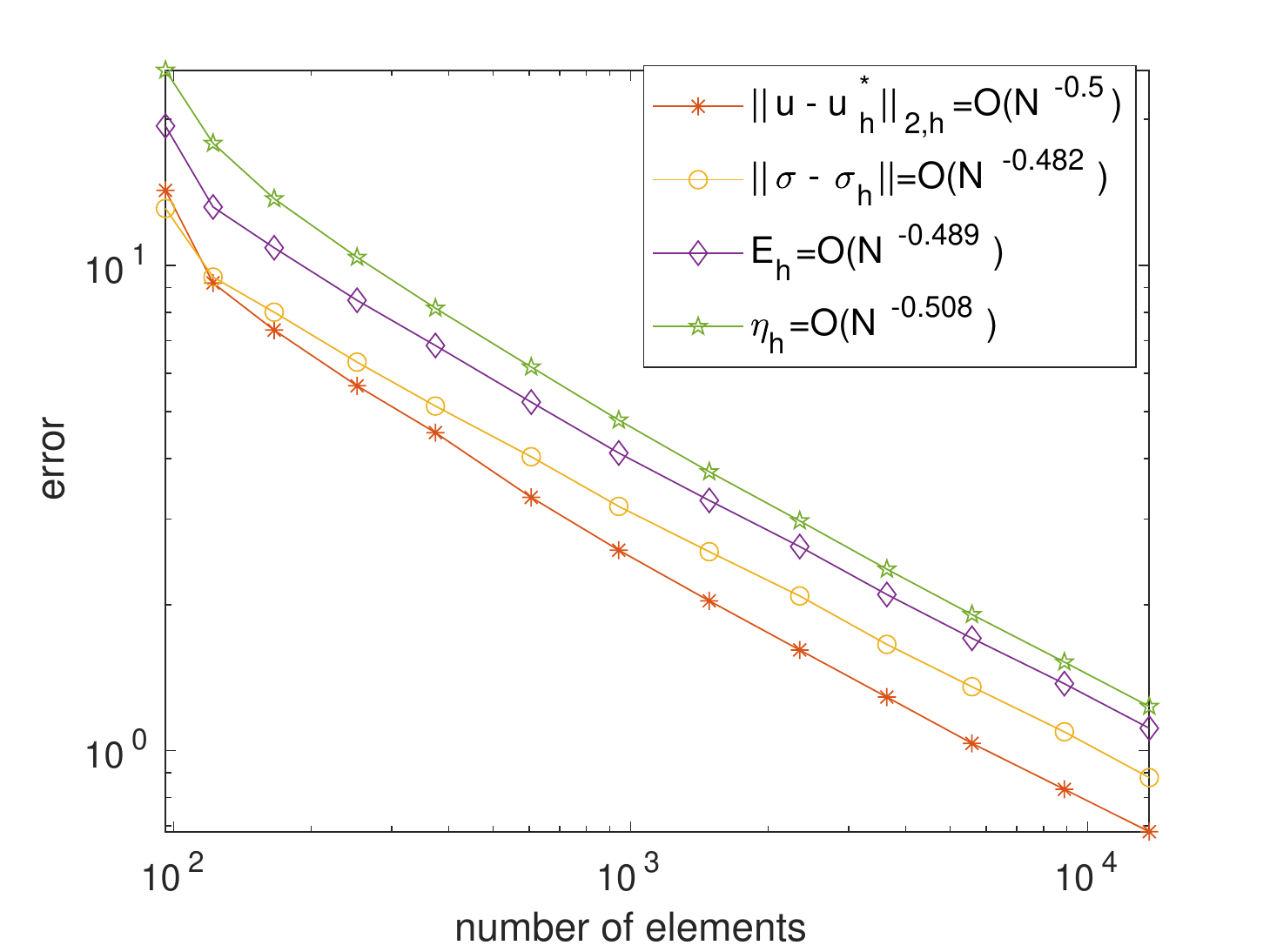}  
  \caption{Convergence history based on $\eta_h$.}
  \label{curveeta}
\end{subfigure}
\begin{subfigure}{.49\textwidth}
  \centering
  \includegraphics[width=.8\linewidth]{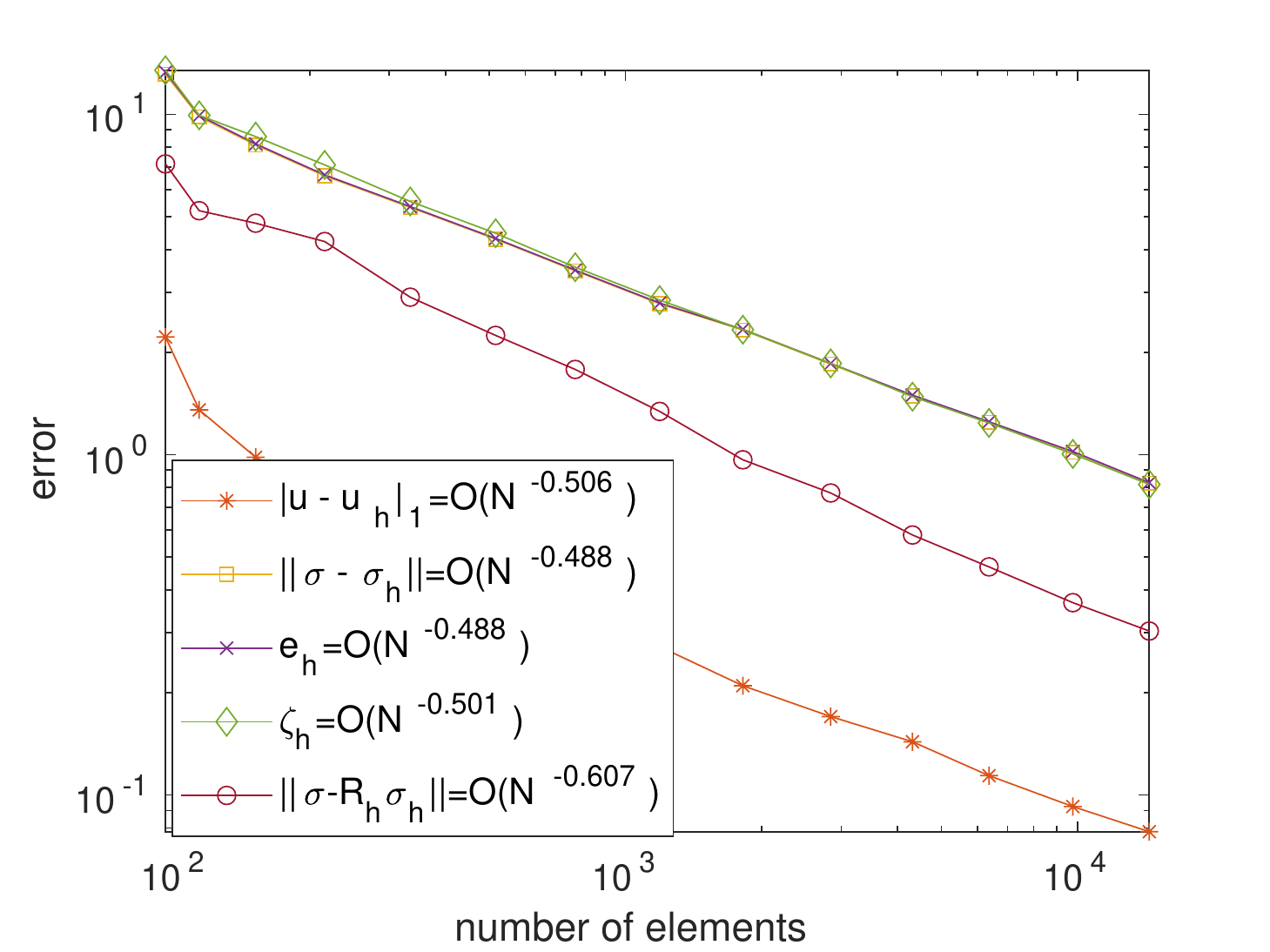}  
  \caption{Convergence history based on $\zeta_h$.}
  \label{curvezeta}
\end{subfigure}
\caption{Convergence history  {of exact errors and estimators for Problem 1}.}
\label{curveclamped}
\end{figure}

\begin{figure}[ht]
\begin{subfigure}{.49\textwidth}
  \centering
  \includegraphics[width=.8\linewidth]{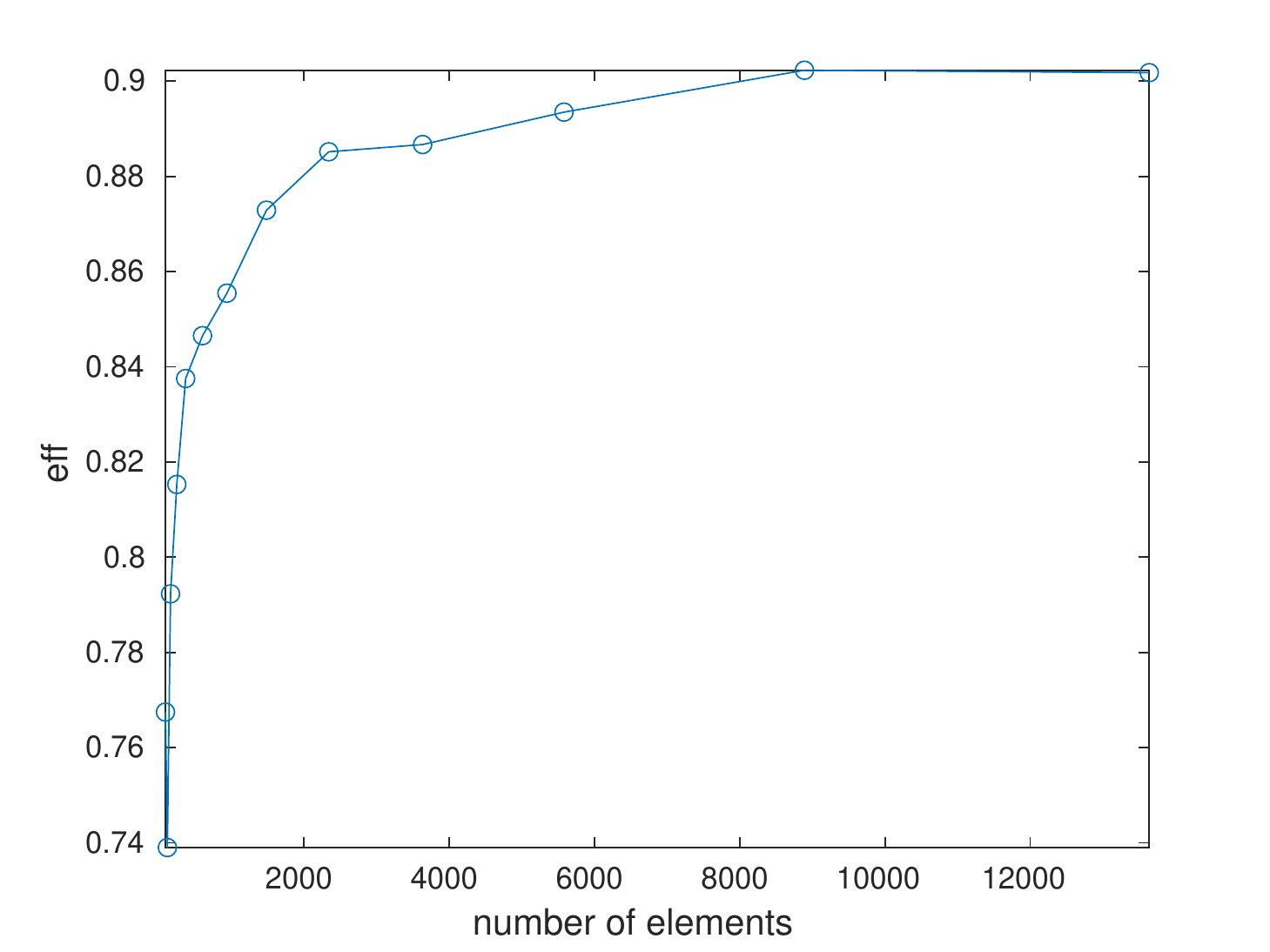}  
  \caption{Effectiveness of $\eta_h$.}
  \label{effeta}
\end{subfigure}
\begin{subfigure}{.49\textwidth}
  \centering
  \includegraphics[width=.8\linewidth]{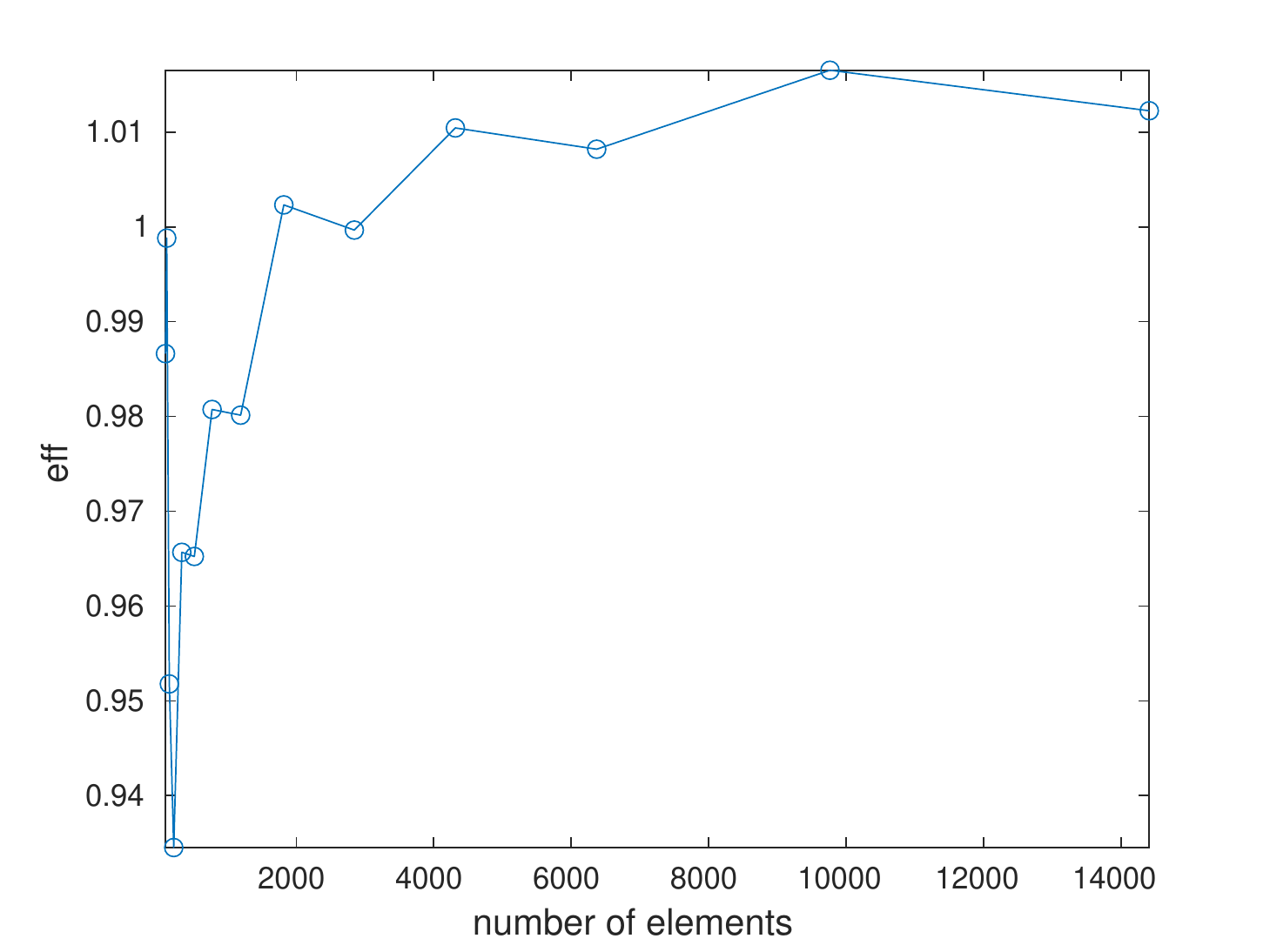}  
  \caption{Effectiveness of $\zeta_h$.}
  \label{effzeta}
\end{subfigure}
\caption{Effectiveness ratio   {of error estimators for Problem 1}.}
\label{eff}
\end{figure}

\begin{figure}[ht]
\begin{subfigure}{.49\textwidth}
  \centering
  \includegraphics[width=.8\linewidth]{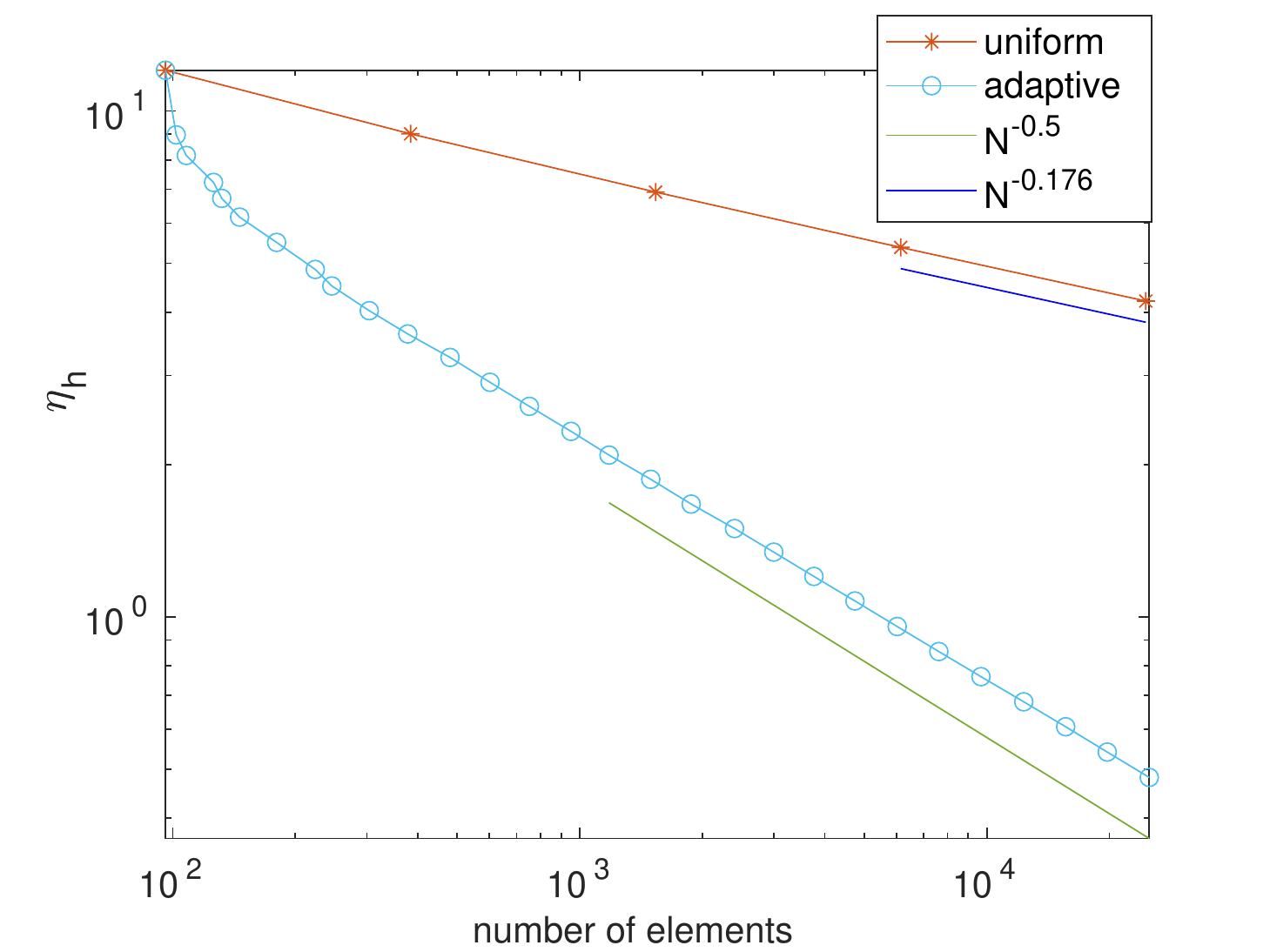}  
  \caption{Convergence of $\eta_h$.}
  \label{eta}
\end{subfigure}
\begin{subfigure}{.49\textwidth}
  \centering
  \includegraphics[width=.8\linewidth]{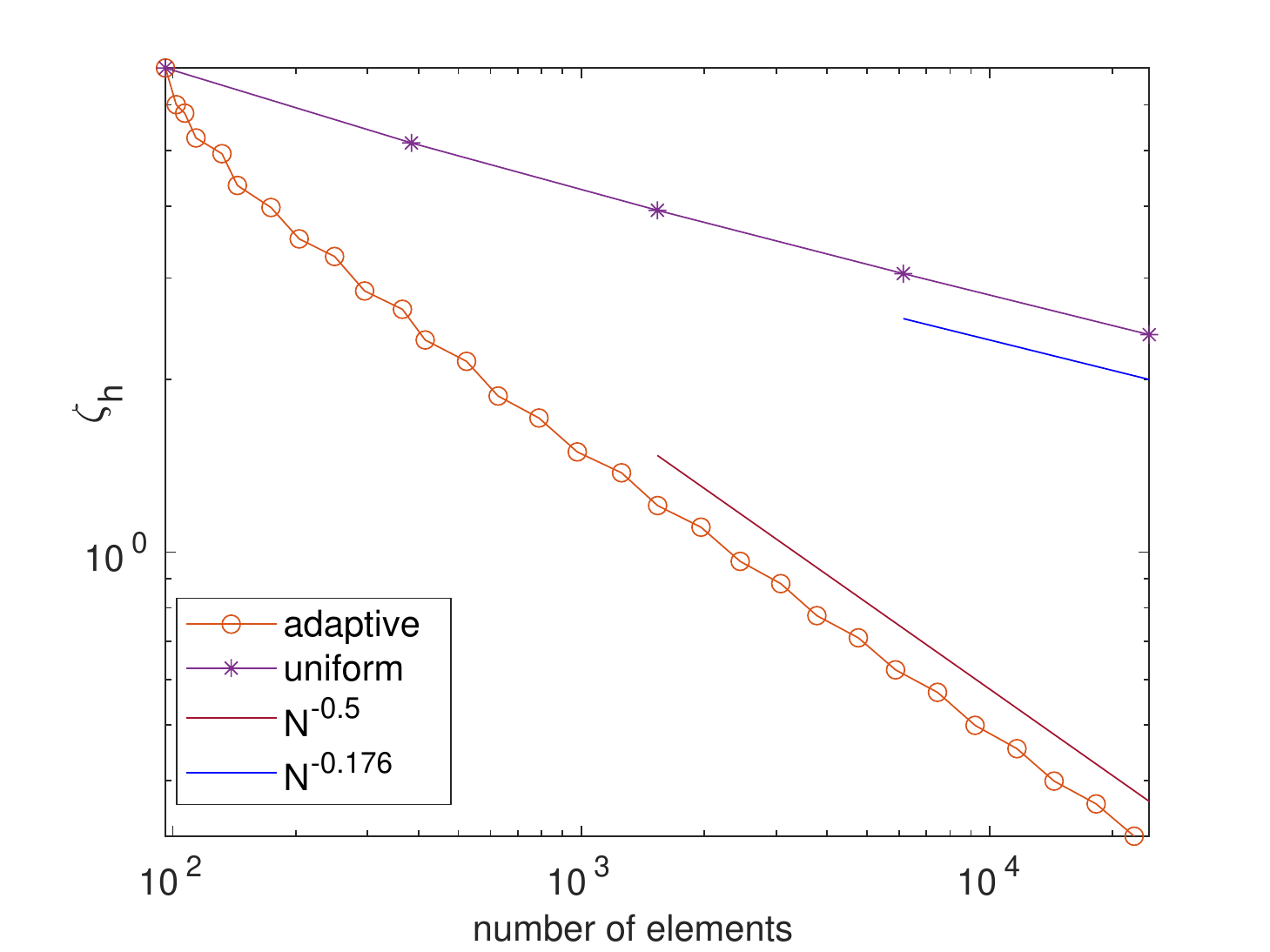}  
  \caption{Convergence of $\zeta_h$.}
  \label{zeta}
\end{subfigure}
\caption{Convergence of error estimators {for Problem 2}.}
\label{estimator}
\end{figure}

\begin{figure}[ht]
\begin{subfigure}{.3\textwidth}
  \centering
  \includegraphics[width=1\linewidth]{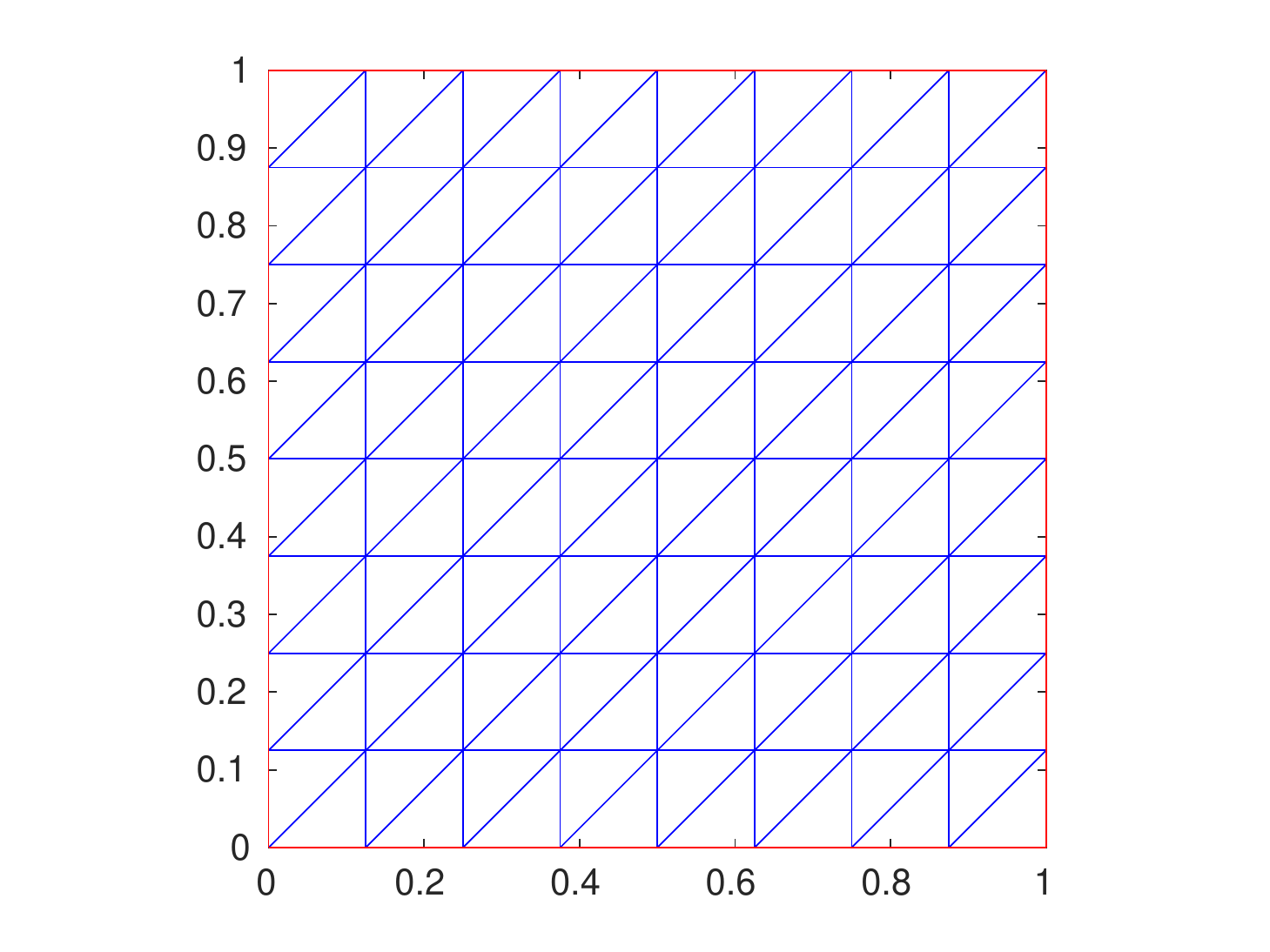}  
  \caption{128 elements}
\end{subfigure}
\begin{subfigure}{.3\textwidth}
  \centering
  \includegraphics[width=1\linewidth]{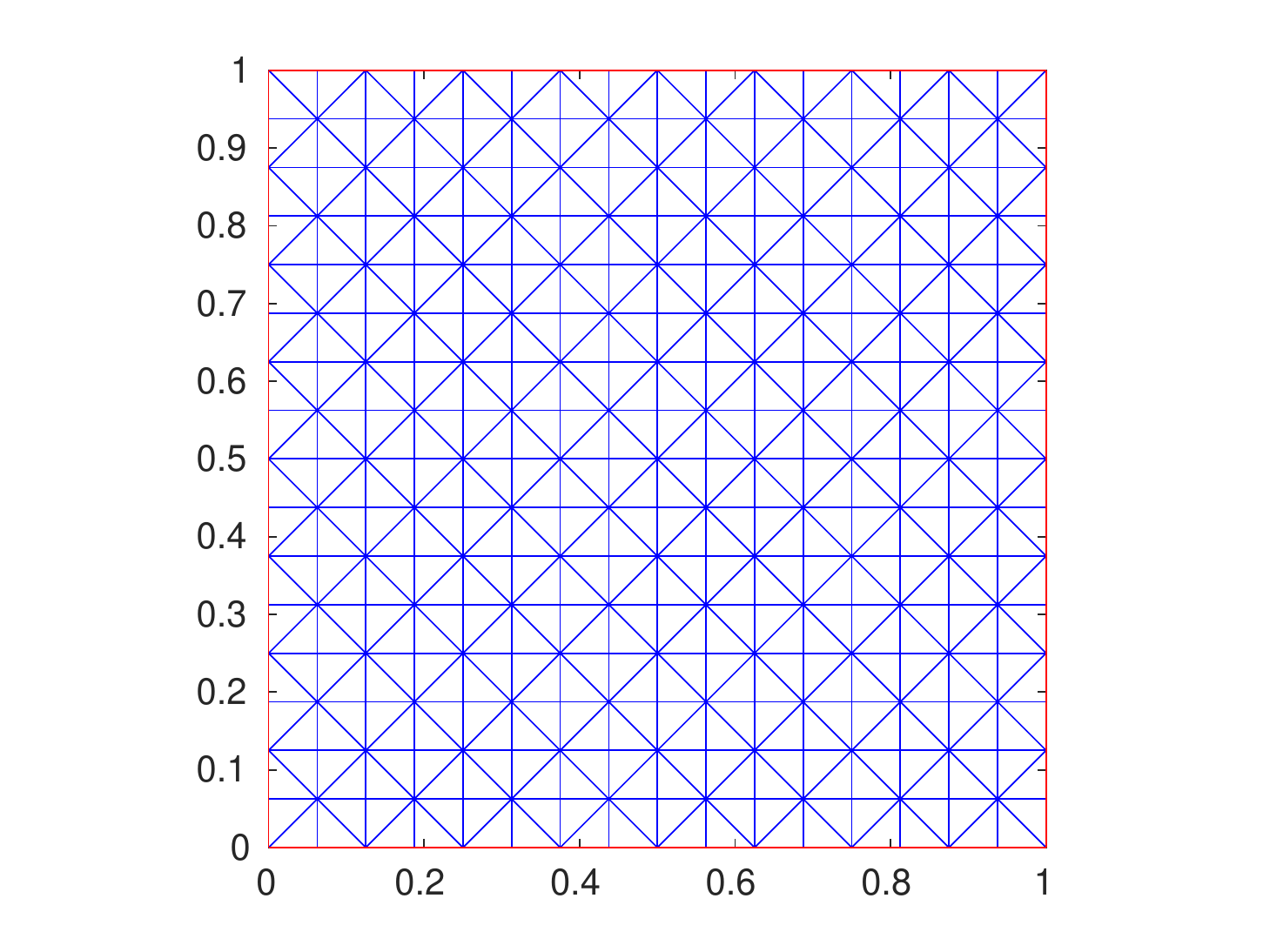}  
  \caption{512 elements}
\end{subfigure}
\begin{subfigure}{.3\textwidth}
  \centering
  \includegraphics[width=1\linewidth]{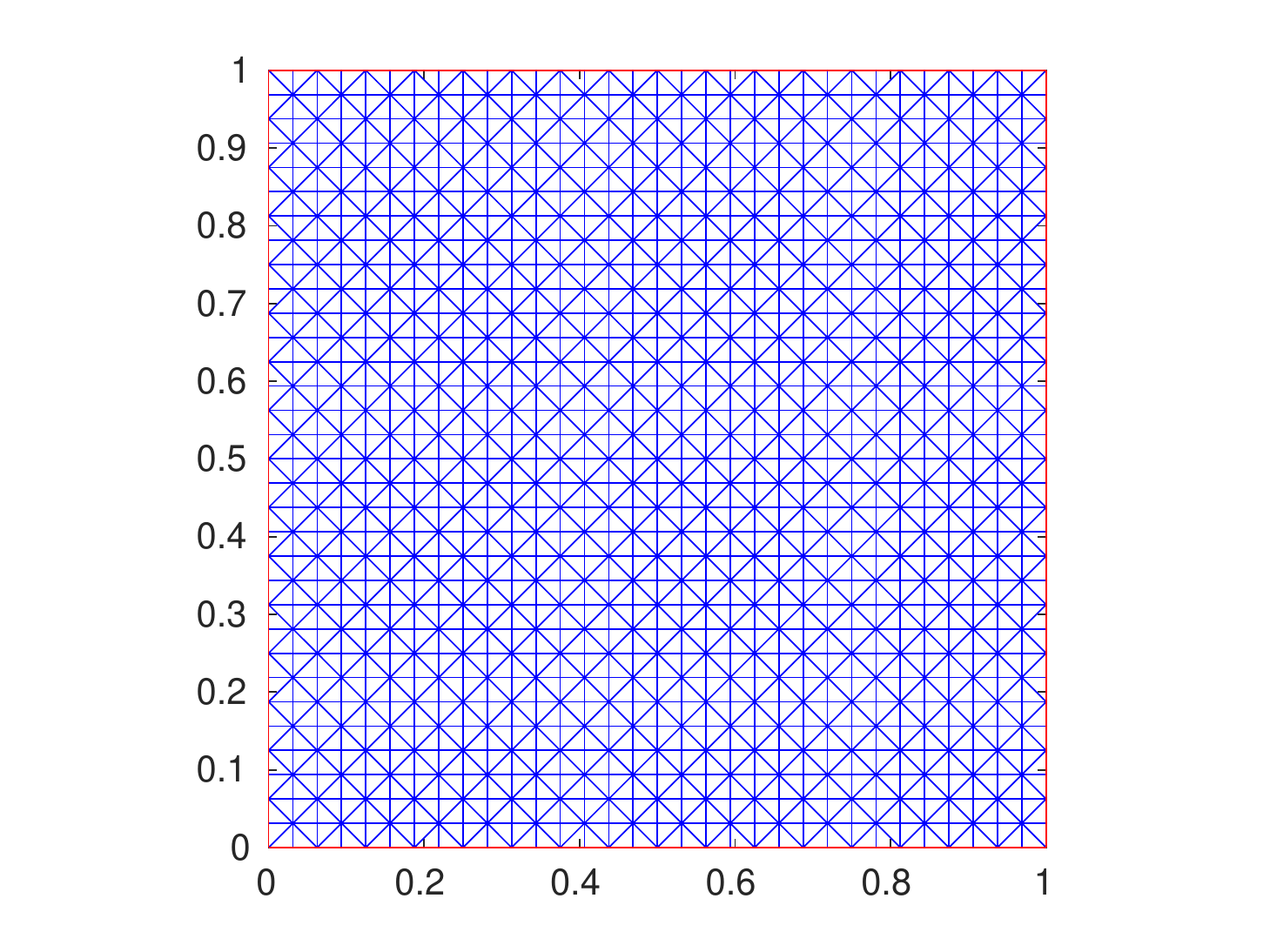}  
  \caption{2048 elements}
\end{subfigure}
\caption{Grids on the unit square  {for Problem 3}.}
\label{griduniform}
\end{figure}

\begin{table}[ht]
\caption{Convergence of the moment errors  {for Problem 3}.}
\centering
\begin{tabular}{|c|c|c|c|c|}
\hline
$N$ & $\|\sigma-\sigma_h\|$&$\|\Pi_h\sigma-\sigma_h\|$
 &$\|\sigma-K_h\sigma_h\|$
&  $\|\sigma-R_h\sigma_h\|$\\
\hline
128&3.348e-2&7.840e-3&1.285e-2&1.615e-2\\
512&1.655e-2&4.240e-3&1.011e-2&4.977e-3\\
2048&8.312e-3&1.932e-3&4.925e-3&1.289e-3\\
8192&4.161e-3&9.385e-4&2.431e-3&3.241e-4\\
32768&2.081e-3&4.656e-4&1.208e-3&8.109e-5\\
\hline
order &0.997&1.060&1.021&1.981\\
\hline
\end{tabular}
\label{ConvergenceTab}
\end{table}
\subsection{Problem 1}
Following the corner singularity of biharmonic equations analyzed in \cite{Grisvard1992}, on the L-shaped domain
\begin{equation*}
    \Omega = [-1,1]^2\backslash\big([0,1]\times[-1,0]\big),
\end{equation*} 
we consider  \eqref{mix:c} with the exact solution   
\begin{equation*} 
\begin{aligned}
u(r,\theta)&=(r^{2}\cos^{2}\theta-1)^{2}(r^{2}\sin^{2}\theta-1)^{2}r^{1+\gamma}g(\theta),\\
\gamma&=0.544483736782464,\quad\omega=\frac{3\pi}{2},\\
g(\theta)&=\left(\frac{\sin((\gamma-1)\omega)}{\gamma-1}-\frac{\sin((\gamma+1)\omega)}{\gamma+1}\right)
\big(\cos((\gamma-1)\theta)-\cos((\gamma+1)\theta)\big)\\
&-\left(\frac{\sin((\gamma-1)\theta)}{\gamma-1}-\frac{\sin((\gamma+1)\theta)}{\gamma+1}\right)
\big(\cos((\gamma-1)\omega)-\cos((\gamma+1)\omega)\big),
\end{aligned}
\end{equation*}
where $(r,\theta)$ is the polar coordinate with respect to the origin. The boundary condition is purely clamped ($\partial\Omega=\Gamma_c$). Figure~\ref{initialL} is the initial grid used in adaptive algorithms. An highly graded produced by $\eta_h$-based and $\zeta_h$-based adaptive methods are shown in Figures \ref{graded1} and \ref{graded2}, respectively.

In Figure~\ref{curveclamped}, the numerical order of convergence is evaluated using solutions after the 6th adaptive loop. It could be observed that those rates of convergence match the predicted rates in Sections \ref{sec02h} and \ref{sec01}. In addition, there is apparent superconvergence of $\|\sigma-R_h\sigma_h\|$ in Fig.~\ref{curvezeta}. The effectiveness ratio $\text{eff}=E_h/\eta_h$ and $\text{eff}=e_h/\zeta_h$ is shown in Figure \ref{eff}. Readers are referred to \cite{WuZhang2007} for a theoretical investigation of superconvergence of linear and quadratic Lagrange elements under adaptive grids. As explained in Section \ref{sec01}, the error estimator $\zeta_h$ is almost asymptotically exact.

\subsection{Problem 2}
In the second experiment, we consider \eqref{HHJ} on the L-shaped domain $\Omega$ with $\partial\Omega=\overline{\Gamma}_s\cup\overline{\Gamma}_f$, where the 
free part $\Gamma_f$ consists of two segments sharing the reentrant corner and $\Gamma_s$ is the rest part of $\partial\Omega$. The transverse load is $f=10.$ The initial grid is the same as Problem 1. We use the newest vertex bisection in adaptive algorithms and uniform quad-refinement in non-adaptive ones. For this problem, there is no explicit analytical solution and we report the convergence history of error estimators in Figure \ref{estimator}.

It is observed in Figure \ref{estimator} that the convergence under uniform refinement is rather slow.
However, the adaptive algorithms based on $\eta_h$ and $\zeta_h$  are able to recover the optimal rate of convergence with respect to $N$.

\subsection{Problem 3}
In the third experiment, we consider \eqref{mix:c} on $\Omega=[0,1]^2$ with the exact solution $$u=x_1^2(x_1-1)^2x_2^2(x_2-1)^2$$
under the purely clamped boundary condition $\partial\Omega=\Gamma_c$. We numerically compare the performance of the postprocessing scheme $R_h$ and the edge-averaging scheme $K_h$ proposed in \cite{HuMa2016,Brandts1994}. The initial grid is the $8\times8$ uniform triangulation of $\Omega$. A grid sequence is then generated by uniform newest vertex bisection, see Figure \ref{griduniform}. We note that this grid sequence is not uniformly parallel and the assumption in Lemma \ref{superclosesigma} fails.

The numerical order in Table \ref{ConvergenceTab} is evaluated using \textsf{polyfit} and the data below the second row.
It is clear from Table \ref{ConvergenceTab} that $\|\Pi_h\sigma-\sigma_h\|$ is not super-small and $\|\sigma-K_h\sigma_h\|$ is not superconvergent at all. However, it is observed that $\|\sigma-R_h\sigma_h\|$ has one order higher global superconvergence, indicating the superiority of $R_h$ in this situation.

\section*{Declarations}
\vspace{0.2cm}
\noindent\textbf{Funding} The author did not receive support from any organization for this work.

\vspace{0.2cm}
\noindent\textbf{Data Availability} Data sharing is not applicable to this article as no datasets were generated or analysed during
the current study.

\vspace{0.2cm}
\noindent\textbf{Conflicts of interest} The author has no relevant financial or non-financial interests to disclose.

\vspace{0.2cm}
\noindent\textbf{Code availability} The code used in this study is available from the author upon request.


\bibliographystyle{spmpsci}

\end{document}
